\documentclass{article}

\usepackage[preprint]{neurips_2026}

\usepackage[utf8]{inputenc}
\usepackage[T1]{fontenc}
\usepackage{hyperref}
\usepackage{url}
\usepackage{booktabs}
\usepackage{amsfonts}
\usepackage{nicefrac}
\usepackage{microtype}
\usepackage{xcolor}

\usepackage[hyperpageref]{backref}
\hypersetup{
    colorlinks=true,
    citecolor=darkscarlet,
    linkcolor=darkpowderblue,
    filecolor=magenta,
    urlcolor=yaleblue,
    menucolor=gray
}
\renewcommand*{\backref}[1]{}%
\renewcommand*{\backrefalt}[4]{%
   \ifcase #1
     \footnotesize{(Not cited.)}%
   \or
     \footnotesize{(Cited on page~#2)}%
   \else
     \footnotesize{(Cited on page~#2)}%
\fi }

\PassOptionsToPackage{capitalise,noabbrev}{cleveref}

\usepackage{multirow}
\usepackage[utf8]{inputenc}
\usepackage[T1]{fontenc}
\usepackage{calligra}
\usepackage{layout}

\usepackage{amsmath}
\usepackage{amssymb}
\usepackage{amsfonts}
\usepackage{amsthm}

\usepackage{mathtools}
\usepackage{thmtools} 
\usepackage{thm-restate}

\usepackage{latexsym}

\usepackage{xcolor}
\usepackage{graphicx}

\usepackage{algorithmic}
\usepackage{algorithm}

\usepackage{hyperref}
\usepackage{url}
\usepackage{booktabs}
\usepackage{microtype}

\usepackage{cleveref}

\usepackage{bbm}
\usepackage{bm}

\usepackage{nicefrac}
\usepackage{adjustbox}
\usepackage{threeparttable}

\usepackage{import, ifthen}

\usepackage{tcolorbox}
\usepackage{pifont}
\definecolor{mydarkred}{RGB}{192,25,25}
\definecolor{mydarkgreen}{RGB}{25,192,25}
\definecolor{mydarkblue}{RGB}{25,25,192}

\definecolor{RedOrange}{cmyk}{0,0.77,0.87,0}
\definecolor{Orchid}{cmyk}{0.32,0.64,0,0}

\usepackage{xspace}

\newcommand{\algname}[1]{{\color{gray}\small\sf#1}\xspace}

\newcommand{\norm}[1]{{\left\| #1 \right\|}}

\newcommand{\rbr}[1]{\left(#1\right)}
\newcommand{\sbr}[1]{\left[#1\right]}

\newcommand{\anglebr}[1]{\left\langle#1\right\rangle}

\newcommand{\curlybr}[1]{\left\{#1\right\}}

\newcommand{\inner}[2]{\anglebr{#1, #2}}

\newcommand{\cB}{\mathcal{B}}
\newcommand{\cC}{\mathcal{C}}

\newcommand{\cH}{\mathcal{H}}
\newcommand{\cI}{\mathcal{I}}
\newcommand{\cJ}{\mathcal{J}}

\newcommand{\cO}{\mathcal{O}}

\newcommand{\cX}{\mathcal{X}}

\newcommand{\R}{\mathbb{R}}

\newcommand{\del}[1]{}

\newcommand{\st}{\;:\;}
\newcommand{\eqdef}{\coloneqq}

\DeclareMathOperator{\diam}{diam}

\DeclareMathOperator{\dist}{dist}

\DeclareMathOperator{\lmo}{LMO}

\newcommand{\Proj}{{\rm Proj}}

\newcommand{\pr}[1][]{
  \ifthenelse { \equal{#1}{} }
  { \ensuremath{\mathrm{P}} }
  { \ensuremath{\mathrm{P}\left(#1\right)} }
}

\DeclareMathOperator{\interior}{int}
\DeclareMathOperator{\ri}{ri}

\DeclareMathOperator{\cl}{cl}

\newcommand{\argmin}{\mathop{\arg\!\min}}

\usepackage[colorinlistoftodos,bordercolor=orange,backgroundcolor=orange!20,linecolor=orange,textsize=scriptsize]{todonotes}

\newcommand{\ppeter}[1]{}

\newcommand{\abs}[1]{| #1 |}

\newcommand{\hidesolutions}[1]{}

\definecolor{darkscarlet}{rgb}{0.34, 0.01, 0.1}
\definecolor{yaleblue}{rgb}{0.06, 0.3, 0.57}
\definecolor{darkpowderblue}{rgb}{0.0, 0.2, 0.6}
\definecolor{midnightblue}{HTML}{0059b3}
\definecolor{noonblue}{HTML}{e5eef7}
\definecolor{chromered}{HTML}{f14233}
\definecolor{olivedrab}{HTML}{6b8e23}

\setcitestyle{authoryear,round,citesep={;},aysep={,},yysep={;}}

\allowdisplaybreaks

\declaretheorem[name=Theorem]{theorem}
\crefname{theorem}{Theorem}{Theorems}\Crefname{theorem}{Theorem}{Theorems}

\declaretheorem[sibling=theorem,name=Proposition,style=plain]{proposition}
\declaretheorem[sibling=theorem,name=Lemma,style=plain]{lemma}
\declaretheorem[sibling=theorem,name=Corollary,style=plain]{corollary}
\declaretheorem[sibling=theorem,name=Definition,style=definition]{definition}
\declaretheorem[sibling=theorem,name=Assumption,style=definition]{assumption}
\declaretheorem[sibling=theorem,name=Remark,style=remark]{remark}

\crefname{proposition}{Proposition}{Propositions}\Crefname{proposition}{Proposition}{Propositions}
\crefname{lemma}{Lemma}{Lemmas}\Crefname{lemma}{Lemma}{Lemmas}
\crefname{corollary}{Corollary}{Corollaries}\Crefname{corollary}{Corollary}{Corollaries}
\crefname{definition}{Definition}{Definitions}\Crefname{definition}{Definition}{Definitions}
\crefname{assumption}{Assumption}{Assumptions}\Crefname{assumption}{Assumption}{Assumptions}
\crefname{remark}{Remark}{Remarks}\Crefname{remark}{Remark}{Remarks}

\newcommand{\LLMO}{\algname{Local LMO}}

\renewcommand{\eqref}[1]{\textup{(\ref{#1})}}

\title{Local LMO is Secretly a Projection Method!}

\author{
Peter Richt\'{a}rik \\ KAUST\thanks{King Abdullah University of Science and Technology, Thuwal, Saudi Arabia}
\And
Ammar Mahran \\ KAUST$^*$
}

\begin{document}

\maketitle

\begin{abstract}
The local linear minimization oracle \citep{ferris1996linear,richtarik2026local}, or \algname{Local LMO}, solves constrained convex problems without having to compute a projection:
it minimizes a linear model over the intersection of the feasible set with a ball around the current iterate.
We show that, whenever the ball radius does not exceed the Polyak radius, the \algname{Local LMO} step is the Euclidean projection of the current iterate onto the intersection of the feasible set with a half-space that separates the iterate from the solution set---a projection that is nonetheless computable by a linear oracle alone.
We demonstrate that \algname{Local LMO} belongs to a broader family of projection methods which may be indexed by the depth of the localizing half-space.
For objectives with $\vartheta$-Hölder continuous gradient, every method from this family whose half-space lies sufficiently deep drives the best of its first $K$ iterates to optimality at the universal rate $\cO(K^{-(1+\vartheta)/2})$, matching the non-accelerated universal gradient method of \citet{nesterov2015universal}.
Run at the Polyak radius, \algname{Local LMO} attains the same rate when the constrained optima are also unconstrained ($\norm{\nabla f(x_\star)} = 0$), and the rate $\cO(K^{-1/(2-\vartheta)})$ otherwise.
\end{abstract}

\section{Introduction}
\label{sec:introduction}

We study the constrained convex optimization problem
\begin{equation}
\label{eq:problem}
    \min_{x\in\cX} f(x).
\end{equation}

Problems of the form \eqref{eq:problem} are ubiquitous in optimization \citep{boyd2004convex} and arise throughout machine learning \citep{pokutta2020deep, sangalli2021constrained}, statistics and finance \citep{hathaway1985constrained, hall1999density, behr2013portfolio} and signal processing \citep{zhong2026stability, mattingley2010real}, where constraints encode prior structure, enforce regularization or reflect physical feasibility; in safety-critical deployments, demands for safe, robust and fair models \citep{donini2018empirical, yang2022safety} take the same form \citep{cotter2019optimization, dai2023safe}.
There is a rich optimization literature offering a multitude of algorithms for handling such problems. 
A useful way of mapping out the landscape of these methods is to group them by the oracle used to access the constraint set $\cX$. 
Most approaches fall into one of the following three categories:
\begin{enumerate}
    \item 
    {\bf Separation oracle.}
    Such oracles provide, for a given query point~$x$, either a certificate that $x \in \cX$ or a hyperplane separating $x$ and $\cX$. 
    Classical methods based on this oracle include the ellipsoid method \citep{Grotschel1993Geometric}, cutting-plane schemes \citep{Kelley1960Cutting}, and bundle-type epigraph algorithms \citep{Lemarechal1995Variants}. 
    Membership oracles \citep{Grotschel1993Geometric} are a weaker primitive here, usually converted into separation access.

    \item
    {\bf Projection oracle.} 
    Methods in this class assume access to a projection operator of the form $\Proj_{\cX}^{\phi}(x) \in \argmin_{z \in \cX} D_{\phi}(z, x)$, taken with respect to the Bregman divergence of a strictly convex, differentiable function $\phi$.
    This includes \emph{Projected Gradient Descent} \citep{bertsekas1999nonlinear} when $\phi(x)=\tfrac{1}{2}\norm{x}^2$, mirror descent \citep{Nemirovsky1983problem} (and its special cases such as exponentiated gradient methods \citep{Kivinen1997Exponentiated}), and dual averaging \citep{nesterov2009primal}.

    \item 
    {\bf Linear minimization oracle (LMO).}
    These methods, which include the Frank--Wolfe method \citep{FrankWolfe1956, levitin1966constrained} and its variants, access the constraint set only through linear optimization queries of the form
    \begin{align}\label{eq:lmo}
        \lmo_{\cX}(\nabla f(x)) \in \argmin\limits_{z\in \cX}\inner{\nabla f(x)}{z} ,
    \end{align}
    whenever this minimum is attained.
    Such oracles are attractive in settings where projections may be expensive or intractable.
\end{enumerate}

The local linear minimization oracle \citep{ferris1996linear,richtarik2026local}, or \algname{Local LMO}, falls into the third category and solves constrained convex problems without explicit recourse to a projection oracle.
It minimizes a linear model over the intersection of the feasible set $\cX$ with a ball around the current iterate.
If we denote the current iterate by $x_k$ and the radius of the ball by $t_k > 0$, then the next iterate $x_{k+1}$ computed by \LLMO is
\begin{equation}\label{eq:local_lmo}
    x_{k+1}\ \in\ \argmin_{z\in\cX\cap\cB(x_k,t_k)}\inner{\nabla f(x_k)}{z},
\end{equation}
where $\cB(x,t) \eqdef \curlybr{z\in\R^d\st\norm{z-x}\le t}$ denotes the Euclidean ball of radius $t$ centered at $x$.
The update \eqref{eq:local_lmo} \emph{apparently} avoids projections.

To better understand the workings of \LLMO, we shall work under the following standing assumption.
\begin{assumption}\label{ass:main}
    The set $\cX \subseteq \R^d$ is non-empty, closed and convex, the function $f: \R^d \to \R$ is convex and differentiable, and the solution set $\cX_\star \eqdef \argmin_{z \in \cX} f(z)$ is non-empty.
\end{assumption}
We write $f_\star \eqdef \min_{z \in \cX} f(z)$ for the optimal function value,
and note that the set of optimizers $\cX_\star = \cX \cap \curlybr{z \in \R^d \st f(z) \le f_\star}$ is then closed and convex.

Our main result, \Cref{thm:lmo_is_projection}, identifies the \LLMO step \eqref{eq:local_lmo} with a projection: when the ball radius $t_k$ does not exceed the \emph{Polyak radius},
\begin{equation}
    \label{def:polyak-radius}
    t(x_k) \eqdef \frac{f(x_k)-f_\star}{\norm{\nabla f(x_k)}} ,
\end{equation}
the \LLMO step \eqref{eq:local_lmo} is the Euclidean projection of $x_k$ onto the intersection of the feasible set $\cX$ with a half-space whose outward normal is the gradient $\nabla f(x_k)$ and which separates the current iterate $x_k$ from the solution set $\cX_\star$.
A single linear minimization over $\cX \cap \cB(x_k,t_k)$ thus returns the point a projection oracle would have returned, and the method inherits the properties of a projection method---in particular Fejér monotonicity with respect to $\cX_\star$, that is $\norm{x_{k+1}-x_\star} \le \norm{x_k-x_\star}$ for every minimizer $x_\star$ (cf. \Cref{thm:lmo_is_projection}, part~\ref{it:lmoproj-distance})---while remaining implementable by a linear oracle.
This change of view is what yields the convergence guarantees and universal Hölder rates of \Cref{thm:localrate}.
That the half-space contains the solution set $\cX_\star$ is what drives our analysis, and it motivates a wider family of projection methods, to which \LLMO belongs and which we present next.

\subsection{A family of projection methods}
\label{sub:a_family_of_projection_methods}

We consider methods that, at a point $x_k \in \cX\setminus\cX_\star$, intersect the feasible set $\cX$ with a half-space whose outward normal is the gradient and then project onto the intersection.
Since $\nabla f(x) \neq 0$ for all $x \in \cX \setminus \cX_\star$ (\Cref{lem:zero_grad}), such a half-space may be described by its depth, that is, the distance from the current iterate to its boundary.
For $x \in \R^d, g \neq 0$ and $s \ge 0$, we define the half-space
\begin{equation}\label{eq:halfspace}
    \cH_x(g,s) \ \eqdef \ \curlybr{z\in\R^d\st\inner{\tfrac{g}{\norm{g}}}{x-z}\ \ge\ s} ,
\end{equation}
so that $\cH_x(g,s)$ has outward normal $g$ and lies at distance $s$ from the point $x$.
For fixed $x$ and $g$ it shrinks as $s$ grows, $\cH_x(g,s') \subseteq \cH_x(g,s)$ whenever $s \le s'$, so a larger $s$ means a deeper cut and a smaller set to project onto.

The family of projection methods we consider forms, at the current iterate $x_k$, the half-space
\begin{equation}\label{eq:halfspace_k}
    \cH_k \ \eqdef \ \cH_{x_k}\rbr{\nabla f(x_k),\ s_k}
\end{equation}
at an admissible (see \eqref{eq:admissible_cuts}) depth $s_k$, and moves by projecting onto the localized feasible set $\cX\cap\cH_k$, with $\Proj_\cC$ the Euclidean projection onto a closed and convex set $\cC$,
\begin{equation}\label{eq:proj_family}
    x_{k+1} = \Proj_{\cX \cap \cH_k}(x_k) .
\end{equation}
We require the cut to exclude the current iterate while retaining every minimizer,
\begin{equation}\label{eq:localization}
    x_k \notin \cH_k, \qquad \cX_\star \subseteq \cH_k .
\end{equation}

The first requirement, $x_k \notin \cH_k$, is equivalent to positivity of the chosen depth $s_k$.
The second, $\cX_\star \subseteq \cH_k$, asks every minimizer to lie at least $s_k$ beyond $x_k$ along the negative gradient direction, and so holds if and only if $s_k$ does not exceed
\begin{equation}
    \label{def:s-star}
    s_\star(x) \ \eqdef \ \inf_{x_\star \in \cX_\star} \inner{\frac{\nabla f(x)}{\norm{\nabla f(x)}}}{x-x_\star}
\end{equation}
at $x = x_k$ (\Cref{thm:alpha_star_exact}), the depth of the deepest localizing cut.
The admissible depths at $x_k$ therefore form the interval
\begin{equation}
    \label{eq:admissible_cuts}
    s_k \ \in \ \left(\,0 ,\ s_\star(x_k)\,\right] .
\end{equation}

Convexity of the objective ensures that this interval is always non-empty.
For every $x_\star \in \cX_\star$, convexity gives $\inner{\nabla f(x)}{x-x_\star} \ge f(x)-f_\star$, so that
\begin{equation}
    \label{eq:cvx-lower-bound}
    \inner{\frac{\nabla f(x)}{\norm{\nabla f(x)}}}{x-x_\star} \ \ge \ \frac{f(x)-f_\star}{\norm{\nabla f(x)}} \ = \ t(x) , \qquad \forall x_\star \in \cX_\star ,
\end{equation}
and therefore $s_\star(x) \ge t(x) > 0$.
The Polyak radius is itself an admissible depth, and we refer to the cut at that depth as the \emph{Polyak cut}.
It is the constrained counterpart of the classical subgradient cut \citep{polyak1969minimization,censor1982cyclic}: 
when $\cX = \R^d$, projecting $x$ onto the Polyak cut returns $x - \tfrac{f(x)-f_\star}{\norm{\nabla f(x)}^2}\nabla f(x)$, the gradient step with the Polyak stepsize.
Those admissible depths that are at least the Polyak radius, that is $s \in [\,t(x),\ s_\star(x)\,]$, form what we call the \emph{deep-cut band}.

\subsection{Contributions}
\label{sub:contributions}

The present work makes the following contributions.

\begin{itemize}
    \item \textbf{\algname{Local LMO} is a projection method (\Cref{thm:lmo_is_projection})}. 
    We show that for every $x \in \cX\setminus\cX_\star$ and every admissible radius $0 < t \le t(x)$, where $t(x)$ is the Polyak radius defined in \eqref{def:polyak-radius}, the \LLMO step $x_{\rm lmo}$ of \eqref{eq:local_lmo} taken at $x$ satisfies
    \begin{equation}
        \label{eq:contrib_identification}
        x_{\rm lmo} \ = \ \Proj_{\cX \cap \cH_x(\nabla f(x),\, s_{\rm lmo}(x,t))}(x) ,
    \end{equation}
    where the depth $s_{\rm lmo}(x,t) \eqdef \inner{\frac{\nabla f(x)}{\norm{\nabla f(x)}}}{x - x_{\rm lmo}}$ is admissible.
    In other words, \LLMO is secretly a projection method!

    \item \textbf{Universal Hölder rate for deep cuts (\Cref{thm:universal_band})}. 
    Every method of the form \eqref{eq:proj_family} whose depths $s_k$ lie in the deep-cut band $[\,t(x_k),\ s_\star(x_k)\,]$ drives its best iterate to optimality at rate $K^{-(1+\vartheta)/2}$ on the class of convex functions with $\vartheta$-Hölder continuous gradients, without knowledge of the exponent $\vartheta$ or the constant $L_\vartheta$.

    \item \textbf{Universal Hölder rate for \LLMO (\Cref{thm:localrate})}. 
    The \LLMO depth $s_{\rm lmo}(x_k,t_k)$ never exceeds the Polyak radius and may fall below it, so the previous result does not cover \LLMO in general and we analyze it separately: 
    run at the Polyak radius $t_k = t(x_k)$, it attains the same rate when the constrained minimizers are also unconstrained ones, that is $\norm{\nabla f(x_\star)} = 0$ for every $x_\star \in \cX_\star$ (\Cref{lem:grad_const_on_Xstar}), and the rate $K^{-1/(2-\vartheta)}$ otherwise.
    Both exponents are at least the Frank--Wolfe exponent $\vartheta$ \citep{nesterov2018complexity} on the range $[0,1]$, and neither requires the feasible set $\cX$ to be bounded, which the Frank--Wolfe guarantee does.
\end{itemize}

\section{Related Literature}
\label{sec:related_literature}

\paragraph{Frank--Wolfe and projection-free methods.} 
The conditional gradient method \citep{FrankWolfe1956,levitin1966constrained} accesses $\cX$ only through the global LMO \eqref{eq:lmo} and attains the classical $\cO(L\diam^2(\cX)/K)$ rate for $L$-smooth objectives \citep{Hazan2008Sparse,Jaggi2013,braun2022conditional}, which cannot be improved by methods restricted to that oracle \citep{lan2013complexity}.
Under $\vartheta$-Hölder-smoothness the guarantee degrades to $\cO(K^{-\vartheta})$ \citep{nesterov2018complexity} and requires $\cX$ to be bounded, as the global LMO may otherwise be unbounded below.
Away-step, pairwise and fully corrective variants \citep{wolfe1970integer,lacoste2015global} address the zig-zagging that underlies the $\cO(1/K)$ bound but keep the global oracle. 
For the plain method, linear rates require additional structure, such as a solution interior to $\cX$ \citep{guelat1986some}; 
other variants attain them over polytopes instead, at the price of a geometric constant of the feasible set \citep{lacoste2015global}.
\citet{hazan2026linearly} avoids projections for smooth convex sets by a supporting-tangent computation rather than a restricted oracle, and \citet{martinezrubio2026beyond} revisit the step size itself; that linear minimization and projection are not interchangeable in cost is made precise by \citet{combettes2021complexity}.
Interest in LMO-based methods has been renewed by deep-learning optimizers such as \algname{Muon} \citep{jordan2024muon}, whose update may be read as a linear minimization over a spectral-norm ball, a view developed in \algname{Scion} \citep{pethick2025training} and \algname{Gluon} \citep{riabinin2025gluon}.

\paragraph{Projection onto a separating cut.} 
The projection step in \eqref{eq:proj_family} has a long history.
For $\cX = \R^d$ it is the subgradient projector of \citet{polyak1969minimization}, taken up for convex feasibility problems by \citet{censor1982cyclic} and given efficiency estimates by \citet{kiwiel1996efficiency}.
Projections onto intersections of half-spaces of this kind underlie the weak-to-strong convergence principle of \citet{bauschke2001weak}, and the form $\Proj_{\cX\cap\cH_k}(x_k)$ appears in the variational-inequality method of \citet{solodov1999new}.
Closest to the geometry studied here are the level method of \citet{Lemarechal1995Variants}, which with a single cut and level $f_\star$ specializes to the projection onto the Polyak cut, 
the ballstep subgradient level methods of \citet{kiwiel1999ballstep}, which already combine a trust region with a level cut, 
and the minorant method of \citet{devanathan2024polyak}.
Fejér monotonicity with respect to $\cX_\star$ is the standard engine for this family \citep{bauschke1996projection,combettes2001quasi} and underlies our analysis as well.
What seems absent from the literature is the observation that, for an arbitrary closed convex $\cX$, a projection of this kind---onto a cut the oracle itself selects---is produced by a single linear minimization over a trust region, which is the content of \Cref{thm:lmo_is_projection}.

\paragraph{Universal and Hölder-smooth methods.} 
\citet{nesterov2015universal} introduced universal gradient methods, attaining $\cO(K^{-(1+\vartheta)/2})$ unaccelerated and $\cO(K^{-(1+3\vartheta)/2})$ accelerated across the whole Hölder scale without knowledge of $(\vartheta,L_\vartheta)$, by means of a backtracking line search.
The accelerated exponent is optimal and is attained without line search by the bundle-level methods of \citet{lan2015bundle}, at the cost of a compact feasible set and an auxiliary level subproblem per iteration, and by the auto-conditioned method of \citet{li2026simple}, which dispenses with compactness (see also \citet{deng2024uniformly}).
Among projection-free methods, \citet{ito2023parameter} remove the need for $(\vartheta,L_\vartheta)$ at the Frank--Wolfe exponent $\vartheta$ through an adaptive line search on a bounded domain; 
the exponent $\tfrac{1}{2-\vartheta}$ of \Cref{thm:localrate} exceeds $\vartheta$ on $[0,1)$ and requires neither a line search nor a bounded domain, at the price of knowing $f_\star$.

\paragraph{Local LMO and the Polyak stepsize.} 
\citet{richtarik2026local} reintroduce the local oracle \eqref{eq:local_lmo}, establish well-posedness, give closed forms for several structured feasible sets, and prove convergence under smoothness, strong convexity and bounded gradients, with the Fejér decrease $\norm{x_{k+1}-x_\star}^2 \le \norm{x_k-x_\star}^2 - t_k^2$ driving the analysis.
The oracle is the subproblem of the trust-region successive linear programming scheme analyzed by \citet{ferris1996linear}, and reappears as the local linear optimization oracle of \citet{garber2016linearly}, which relaxes \eqref{eq:local_lmo} by returning a point within a multiple of the radius and delivers linear convergence over polytopes for smooth, strongly convex objectives.
The trust-region viewpoint is shared with the ball-proximal point method of \citet{gruntkowska2025ball}, which minimizes the objective itself over a ball, and with its non-Euclidean counterpart \citep{gruntkowska2025non}, in which the linearized ball step studied here appears as the practical surrogate for the exact ball-proximal update.
The radius $t(x)$ is the constrained counterpart of the Polyak stepsize \citep{polyak1969minimization}, which is known to adapt to problem regularity: 
\citet{hazan2019revisiting} show that a single Polyak-type rule is near-optimal simultaneously across ranges of strong convexity and smoothness, \citet{he2025new} give tight analyses together with guarantees under Hölder smoothness and Hölder growth, and \citet{orabona2025new} recover the rule as gradient descent on a surrogate objective.
Like the Polyak stepsize, $t(x)$ presumes knowledge of $f_\star$, which may be estimated online \citep{abdukhakimov2025polyak}.

\section{Theory}
\label{sec:theory}

We say that a radius is admissible at $x$ if $0 < t \le t(x)$, where $t(x)$ denotes the Polyak radius defined in \eqref{def:polyak-radius}. 
Throughout, $g_k \eqdef \nabla f(x_k)$ and $a_k \eqdef f(x_k)-f_\star$ denote the gradient and the suboptimality gap at the $k$-th iterate, respectively, and $R \eqdef \dist(x_0,\cX_\star)$ denotes the initial distance to the solution set.

This section presents three results:
\Cref{thm:lmo_is_projection} identifies \LLMO as a member of the projection family \eqref{eq:proj_family}, and thus places every method we consider on one axis, the depth of the cut.
\Cref{thm:universal_band} shows that cutting at least as deep as the Polyak cut, that is $s_k \in [\,t(x_k), s_\star(x_k)\,]$, is sufficient to obtain a universal rate.
The \LLMO depth never exceeds $t(x_k)$, lying at or below the shallow endpoint of that band, and so \Cref{thm:universal_band} does not cover \LLMO in general; 
\Cref{thm:localrate} recovers a universal rate for it through a refined argument.
We defer the detailed proofs to the appendix, offering sketches of the key arguments here.

\subsection{Local LMO is a projection method}

Fix $x \in \cX \setminus \cX_\star$ and an admissible radius $t \in (0,t(x)]$, and let $x_{\rm lmo}$ solve \eqref{eq:local_lmo}. 
We define the \LLMO depth
\begin{equation}
    \label{def:s-lmo}
    s_{\rm lmo}(x,t) \ \eqdef \ \inner{\frac{\nabla f(x)}{\norm{\nabla f(x)}}}{x-x_{\rm lmo}} ,
\end{equation}
that is, the depth of the cut whose boundary passes through the point returned by the oracle.
We then obtain the following.

\begin{restatable}{theorem}{lmoprojthm}
\label{thm:lmo_is_projection}
    Fix $x \in \cX \setminus \cX_\star$ and an admissible radius $t \in (0, t(x)]$, and let $x_{\rm lmo} \in \argmin_{z \in \cX \cap\cB(x,t)} \inner{\nabla f(x)}{z}$ be a \LLMO step.
    Then:
    \begin{enumerate}
        \item \label{it:lmoproj-localizes}
        The \LLMO depth is admissible and does not exceed the radius, that is,
        \begin{equation*}
            0 \ < \ s_{\rm lmo}(x,t) \ \le \ t \ \le \ t(x) \ \le \ s_\star(x) ,
        \end{equation*}
        so the half-space $\cH_x\rbr{\nabla f(x), s_{\rm lmo}(x,t)}$ separates $x$ from, and localizes, the set of minimizers:
        \begin{equation*}
            x \notin \cH_x\rbr{\nabla f(x), s_{\rm lmo}(x,t)} , \qquad \cX_\star \subseteq \cH_x\rbr{\nabla f(x), s_{\rm lmo}(x,t)} .
        \end{equation*}

        \item \label{it:lmoproj-projects}
        The projection onto the localized feasible set \eqref{eq:proj_family} coincides with the \LLMO step, that is, 
        \begin{equation*}
            x_{\rm lmo} = \Proj_{\cX \cap \cH_x(\nabla f(x),\, s_{\rm lmo}(x,t))}(x) .
        \end{equation*}
        In particular, $\argmin_{z \in \cX \cap\cB(x,t)} \inner{\nabla f(x)}{z}$ is a singleton.

        \item \label{it:lmoproj-stepsize}
        The method takes a step of size $t$, that is,
        \begin{equation*}
            \norm{x_{\rm lmo} - x} = \dist\rbr{x, \cX \cap \cH_x(\nabla f(x),\, s_{\rm lmo}(x,t))} = t.
        \end{equation*}

        \item \label{it:lmoproj-distance}
        The squared distance of the new point to any minimizer is reduced by at least $t^2$, that is,
        \begin{equation*}
            \norm{x_{\rm lmo} - x_\star}^2 \le \norm{x - x_\star}^2 - t^2 , \quad \forall x_\star \in \cX_\star .
        \end{equation*}
    \end{enumerate}
\end{restatable}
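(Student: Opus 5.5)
The plan is to first prove the structural fact that the ball constraint is active at \emph{every} \LLMO minimizer, and then read off the four parts from it. Write $g \eqdef \nabla f(x) \neq 0$ (\Cref{lem:zero_grad}) and $\hat g \eqdef g/\norm{g}$.

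\textbf{Step 1: every minimizer lies on the sphere.} Let $z$ be any element of $\argmin_{z' \in \cX\cap\cB(x,t)}\inner{g}{z'}$, and suppose for contradiction that $\norm{z-x}<t$.
\begin{itemize}
\item \emph{$z$ is a global minimizer over $\cX$.} If some $y\in\cX$ had $\inner{g}{y}<\inner{g}{z}$, then for small $\lambda>0$ the point $z+\lambda(y-z)$ lies in $\cX$ by convexity. It also stays in $\cB(x,t)$, because $z$ is interior to the ball. It has strictly smaller objective, which contradicts optimality of $z$. So $\inner{g}{z}\le\inner{g}{x_\star}$ for every $x_\star\in\cX_\star$.
\item \emph{Contradiction.} By \eqref{eq:cvx-lower-bound},
\[
\inner{g}{x-z}\ \ge\ \inner{g}{x-x_\star}\ \ge\ f(x)-f_\star\ =\ t(x)\norm{g}.
\]
On the other hand, Cauchy--Schwarz gives $\inner{g}{x-z}\le\norm{g}\norm{x-z}<t\norm{g}\le t(x)\norm{g}$.
\end{itemize}
Hence $\norm{z-x}=t$ for every minimizer $z$. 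I expect this to be the main step; everything else is bookkeeping around it.

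\textbf{Step 2: part~\ref{it:lmoproj-localizes}.}
\begin{itemize}
\item \emph{Upper bound.} $s_{\rm lmo}(x,t)\le t$ follows from Cauchy--Schwarz and Step 1.
\item \emph{Positivity.} Fix $x_\star\in\cX_\star$. From \eqref{eq:cvx-lower-bound}, $\norm{x-x_\star}\ge\inner{\hat g}{x-x_\star}\ge t(x)\ge t$. So the point $y \eqdef x+t\,(x_\star-x)/\norm{x_\star-x}$ lies on the segment $[x,x_\star]$, hence in $\cX\cap\cB(x,t)$. It satisfies $\inner{\hat g}{x-y}=t\inner{\hat g}{x-x_\star}/\norm{x-x_\star}>0$. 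Optimality of $x_{\rm lmo}$ gives $s_{\rm lmo}(x,t)\ge\inner{\hat g}{x-y}>0$.
\item \emph{Remaining inequalities.} $t\le t(x)$ is admissibility, and $t(x)\le s_\star(x)$ is \eqref{eq:cvx-lower-bound}.
\item \emph{Separation and localization.} $x\notin\cH_x(g,s_{\rm lmo})$ is positivity of the depth. The inclusion $\cX_\star\subseteq\cH_x(g,s_{\rm lmo})$ follows from $s_{\rm lmo}\le s_\star(x)$, via \Cref{thm:alpha_star_exact} or directly from the definition \eqref{def:s-star}.
\end{itemize}

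\textbf{Step 3: parts~\ref{it:lmoproj-projects} and~\ref{it:lmoproj-stepsize}.} Set $\cC\eqdef\cX\cap\cH_x(g,s_{\rm lmo})$, which is closed, convex, and contains $x_{\rm lmo}$.
\begin{itemize}
\item \emph{Points of $\cC$ in the ball are minimizers.} Membership $z\in\cH_x(g,s_{\rm lmo})$ means exactly $\inner{g}{z}\le\inner{g}{x_{\rm lmo}}$. So any $z\in\cC$ with $\norm{z-x}\le t$ belongs to $\cX\cap\cB(x,t)$ and attains the minimal value, i.e.\ it is a minimizer.
\item \emph{Distance to $\cC$.} By Step 1 such a $z$ has $\norm{z-x}=t$. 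Therefore no point of $\cC$ is closer than $t$, and $\dist(x,\cC)=t=\norm{x_{\rm lmo}-x}$. This is part~\ref{it:lmoproj-stepsize}.
\item \emph{Singleton argmin.} If $z_1\ne z_2$ were two minimizers, their midpoint would also be a minimizer, since the argmin of a linear function over a convex set is convex. By strict convexity of the Euclidean norm it would satisfy $\norm{\cdot-x}<t$, contradicting Step 1.
\item \emph{Identification.} $\Proj_\cC(x)$ is a point of $\cC$ at distance $t$ from $x$, so by the first bullet it is a minimizer. By uniqueness it equals $x_{\rm lmo}$, which gives part~\ref{it:lmoproj-projects}.
\end{itemize}

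\textbf{Step 4: part~\ref{it:lmoproj-distance}.} Since $x_\star\in\cC$ by part~\ref{it:lmoproj-localizes}, the obtuse-angle characterization of the projection gives $\inner{x-\Proj_\cC(x)}{x_\star-\Proj_\cC(x)}\le0$. Expanding the square,
\[
\norm{x_{\rm lmo}-x_\star}^2\ \le\ \norm{x-x_\star}^2-\norm{x-x_{\rm lmo}}^2\ =\ \norm{x-x_\star}^2-t^2 .
\]
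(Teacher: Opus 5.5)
Your proof is correct, and it differs from the paper's in its two central steps. The paper (\Cref{thm:lmo_boundary_and_projection}) starts from the KKT system $\nabla f(x)+n+\lambda(x_{\rm lmo}-x)=0$ with $n\in N_{\cX}(x_{\rm lmo})$ and $\lambda\ge 0$. Obtaining it requires three ingredients: the normal-cone sum rule (with a check that $\ri\cX$ meets the open ball), the description of $N_{\cB(x,t)}$ in \Cref{lem:normal_cone_ball}, and a separate proof that $x_{\rm lmo}\neq x$. The paper then splits on $\lambda>0$ versus $\lambda=0$ twice: once to get $\norm{x_{\rm lmo}-x}=t$, and once to verify the obtuse-angle inequality $\inner{x-x_{\rm lmo}}{y-x_{\rm lmo}}\le 0$ for all $y\in\cX\cap\cH_x(\nabla f(x),s_{\rm lmo}(x,t))$.

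Your Step~1 collapses the first split. A minimizer interior to the ball is a minimizer over all of $\cX$, by the local-to-global property of linear objectives on convex sets; this is exactly the situation of the paper's case $\lambda=0$. The same convexity and Cauchy--Schwarz comparison then yields a strict contradiction rather than a tightness condition, and it also gives $x_{\rm lmo}\neq x$ for free.

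Your Step~3 replaces the variational inequality by the observation that $\cC\cap\cB(x,t)$ is exactly the argmin. The sphere property then forces $\dist(x,\cC)=t$, and the projection is identified as a nearest point. You could even skip the detour through uniqueness of the argmin here, since $x_{\rm lmo}\in\cC$ already attains $\dist(x,\cC)$ and nearest points in closed convex sets are unique.

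What the paper's route buys is the multiplier structure itself; for instance, when $\lambda=0$ the step is exactly $-t\,\nabla f(x)/\norm{\nabla f(x)}$. It also gives a hands-on verification of the projection inequality that is reused in \Cref{thm:sharp_fejer}. Your route needs no constraint qualification or normal-cone calculus. Since the projection inequality follows from your identification anyway, as you use it in Step~4, nothing downstream is lost. Parts~1 and~4 are handled essentially as in the paper.
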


\begin{proof}[Proof sketch]
    We write $N_\cC(z)$ for the normal cone of a convex set $\cC$ at a point $z \in \cC$.
    The first-order optimality condition for the local subproblem \eqref{eq:local_lmo} may be restated as $0 \in \nabla f(x)+N_{\cX}(x_{\rm lmo})+N_{\cB(x,t)}(x_{\rm lmo})$. 
    Since the normal cone of a ball is $\curlybr{0}$ at an interior point and the ray through $x_{\rm lmo}-x$ at a boundary point (\Cref{lem:normal_cone_ball}), there exist $n \in N_{\cX}(x_{\rm lmo})$ and $\lambda \ge 0$ with
    \begin{equation}
        \label{eq:sketch_kkt}
        \nabla f(x)+n+\lambda\rbr{x_{\rm lmo}-x} \ = \ 0 .
    \end{equation}
    Parts~\ref{it:lmoproj-stepsize} and~\ref{it:lmoproj-projects} both follow from \eqref{eq:sketch_kkt}, in that order, each by a case split on whether the multiplier $\lambda$ vanishes; 
    parts~\ref{it:lmoproj-localizes} and~\ref{it:lmoproj-distance} are then consequences.
    Admissibility is used in the form
    \begin{equation}
        \label{eq:sketch_admissible}
        t \ \le \ t(x) \ = \ \frac{f(x)-f_\star}{\norm{\nabla f(x)}} \ \le \ \inner{\frac{\nabla f(x)}{\norm{\nabla f(x)}}}{x-x_\star} , \qquad \forall x_\star \in \cX_\star ,
    \end{equation}
    which is first-order convexity divided by $\norm{\nabla f(x)}$.

    Suppose $\lambda > 0$. 
    Since the step is non-zero (\Cref{lem:local_fw_gap}), $\lambda(x_{\rm lmo}-x)$ is a non-zero element of $N_{\cB(x,t)}(x_{\rm lmo})$, and that cone is $\curlybr{0}$ at every interior point of the ball (\Cref{lem:normal_cone_ball}).
    Hence, $x_{\rm lmo}$ lies on the sphere and the step has length $t$. 
    If instead $\lambda = 0$, then $-\nabla f(x) \in N_{\cX}(x_{\rm lmo})$, so $\inner{\nabla f(x)}{x_{\rm lmo}-x_\star} \le 0$ for every minimizer $x_\star$, while \eqref{eq:sketch_admissible}, Cauchy--Schwarz and $\norm{x_{\rm lmo}-x} \le t$ give the reverse inequality. 
    The resulting equality forces equality both in the Cauchy--Schwarz bound $\inner{\nabla f(x)}{x_{\rm lmo}-x} \ge -\norm{\nabla f(x)}\,\norm{x_{\rm lmo}-x}$ and in $\norm{x_{\rm lmo}-x} \le t$, so that
    \begin{equation}
        \label{eq:sketch_antiparallel}
        x-x_{\rm lmo} \ = \ t\,\frac{\nabla f(x)}{\norm{\nabla f(x)}} , \qquad\text{and in particular}\qquad \norm{x_{\rm lmo}-x} \ = \ t ,
    \end{equation}
    and the step has again length $t$, establishing part~\ref{it:lmoproj-stepsize}.

    For part~\ref{it:lmoproj-projects}, we must establish $\inner{x-x_{\rm lmo}}{y-x_{\rm lmo}} \le 0$ for every $y$ in the localized feasible set, this being the variational characterization of the projection. 
    First, $\inner{n}{y-x_{\rm lmo}} \le 0$, because $y \in \cX$ and $n \in N_{\cX}(x_{\rm lmo})$. 
    Second,
    \begin{equation}
        \label{eq:sketch_membership}
        y \in \cH_x\rbr{\nabla f(x),\, s_{\rm lmo}(x,t)} \qquad\Longleftrightarrow\qquad \inner{\nabla f(x)}{y-x_{\rm lmo}} \ \le \ 0 ,
    \end{equation}
    since inserting the depth \eqref{def:s-lmo} into the definition \eqref{eq:halfspace} of the half-space reads $\inner{\nabla f(x)}{x-y} \ge \inner{\nabla f(x)}{x-x_{\rm lmo}}$.
    For $\lambda > 0$, taking the inner product of \eqref{eq:sketch_kkt} with $y-x_{\rm lmo}$ and discarding the non-positive terms leaves $\lambda\inner{x_{\rm lmo}-x}{y-x_{\rm lmo}} \ge 0$, which is the claim. 
    For $\lambda = 0$, \eqref{eq:sketch_antiparallel} gives $x-x_{\rm lmo}$ as a positive multiple of $\nabla f(x)$, and the claim is then \eqref{eq:sketch_membership} again.

    In part~\ref{it:lmoproj-localizes}, $s_{\rm lmo}(x,t) \le t$ is Cauchy--Schwarz applied to the step $x-x_{\rm lmo}$, whose length is $t$ by part~\ref{it:lmoproj-stepsize}, and $s_{\rm lmo}(x,t) > 0$ holds because the feasible point $(1-\mu)x+\mu x_\star$ with $\mu = t/\norm{x-x_\star}$ strictly improves the linear model (\Cref{lem:local_fw_gap}).
    Taking the infimum over $x_\star$ in \eqref{eq:sketch_admissible} gives the remaining inequalities $t \le t(x) \le s_\star(x)$, which is what makes the cut localize $\cX_\star$ (\Cref{thm:alpha_star_exact}).
    Part~\ref{it:lmoproj-distance} is the projection inequality of part~\ref{it:lmoproj-projects} evaluated at a minimizer, which lies in the localized feasible set by part~\ref{it:lmoproj-localizes}, together with the step length (\Cref{thm:projection_fejer_halfspace}).
\end{proof}

The step length in part \ref{it:lmoproj-stepsize} and the decrease in part \ref{it:lmoproj-distance} are Theorem 3.1(ii)--(iii) of \citet{richtarik2026local}; 
we rederive them because the projection identity yields them at no extra cost and because \Cref{thm:sharp_fejer} later sharpens part \ref{it:lmoproj-distance}. 
Parts \ref{it:lmoproj-localizes} and \ref{it:lmoproj-projects}, and the identification of that length with the distance to the localized feasible set, are novel insights.
\Cref{thm:lmo_is_projection} places \LLMO inside the family \eqref{eq:proj_family} and shows that \LLMO is (secretly) a projection method!
Two parts carry the analysis that follows: part~\ref{it:lmoproj-distance} is Fejér monotonicity (\Cref{thm:projection_fejer_halfspace}), which makes the squared step lengths summable, and part~\ref{it:lmoproj-localizes} places \LLMO on the shallow side of the Polyak depth via $s_{\rm lmo}(x,t) \le t \le t(x)$.
The next subsection turns to the members that cut at least as deep as the Polyak cut.

\subsection{A universal rate for deep cuts}

The rates in this subsection and the next are stated on the Hölder scale.
We say that the gradient map $\nabla f$ of the objective function is Hölder continuous with exponent $\vartheta\in[0,1]$ and constant $L_\vartheta$ (or simply, $\vartheta$-Hölder continuous) if the Bregman divergence $D_f(u,v) \eqdef f(u)-f(v)-\inner{\nabla f(v)}{u-v}$ satisfies
\begin{equation}\label{eq:holder}
    D_f(u,v)\ \le\ \tfrac{L_\vartheta}{1+\vartheta}\norm{u-v}^{1+\vartheta}, \qquad\forall u,v\in\R^d .
\end{equation}
The exponent $\vartheta = 1$ corresponds to the smooth case, $\vartheta = 0$ to that of bounded gradient variation, and the scale interpolates between them.
Following \citet{nesterov2015universal}, we call a method \emph{universal} if it needs no a priori knowledge of $(\vartheta,L_\vartheta)$; 
the methods below are universal in this sense.

For the subfamily of projection methods \eqref{eq:proj_family} in the deep-cut band $[t(x), s_\star(x)]$, we obtain the following.

\begin{restatable}[Universal Hölder rate on the deep-cut band]{theorem}{universalbandthm}
\label{thm:universal_band}
    Assume $\nabla f$ is Hölder continuous with exponent $\vartheta\in[0,1]$ and constant $L_\vartheta$ as in \eqref{eq:holder}. 
    Let $\{x_k\}$ be generated from $x_0\in\cX\setminus\cX_\star$ by $x_{k+1} = \Proj_{\cX \cap \cH_{x_k}(\nabla f(x_k),\, s_k)}(x_k)$ with any (possibly adaptive) depths in the deep-cut band, that is, satisfying $t(x_k) \le s_k \le s_\star(x_k)$ for all $k$, the run stopping if some $x_k \in \cX_\star$.
    Then, for every $K \ge 1$, we have
    \begin{equation}
    \label{eq:band_rate}
        \min_{0\le k\le K} \bigl(f(x_k)-f_\star\bigr) \le \frac{L_\vartheta}{1+\vartheta} \left(\frac{\dist^2(x_0,\cX_\star)}{K}\right)^{\frac{1+\vartheta}{2}} .
    \end{equation}
\end{restatable}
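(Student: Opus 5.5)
The plan is to combine the Fejér inequality of the projection step with a one-step descent bound from the Hölder upper model, evaluated at the \emph{new} iterate. Write $d_k \eqdef \norm{x_{k+1}-x_k}$. If the run stops at some $x_k \in \cX_\star$ with $k \le K$, the left-hand side of \eqref{eq:band_rate} is zero and there is nothing to prove. So assume $x_k \in \cX\setminus\cX_\star$ for all $k \le K-1$; by \Cref{lem:zero_grad}, every $g_k \neq 0$ and each half-space $\cH_k$ is well defined.

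\textbf{Step 1 (Fejér monotonicity and summable steps).} Since $s_k \le s_\star(x_k)$, \Cref{thm:alpha_star_exact} gives $\cX_\star \subseteq \cH_k$. Hence every $x_\star \in \cX_\star$ lies in the closed convex set $\cX\cap\cH_k$, which is therefore nonempty. The variational characterization of the projection $x_{k+1} = \Proj_{\cX\cap\cH_k}(x_k)$ (as in \Cref{thm:projection_fejer_halfspace}) then yields
\begin{equation*}
\norm{x_{k+1}-x_\star}^2 \le \norm{x_k-x_\star}^2 - d_k^2 .
\end{equation*}
Fix $x_\star = \Proj_{\cX_\star}(x_0)$ and telescope over $k=0,\dots,K-1$. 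This gives $\sum_{k=0}^{K-1} d_k^2 \le R^2$, and hence
\begin{equation*}
\min_{0\le k\le K-1} d_k^2 \le \frac{R^2}{K}.
\end{equation*}

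\textbf{Step 2 (one-step bound: the key inequality).} We aim to show that
\begin{equation*}
f(x_{k+1}) - f_\star \le \frac{L_\vartheta}{1+\vartheta}\, d_k^{1+\vartheta}.
\end{equation*}
Apply \eqref{eq:holder} with $u = x_{k+1}$ and $v = x_k$:
\begin{equation*}
f(x_{k+1}) \le f(x_k) - \inner{g_k}{x_k - x_{k+1}} + \frac{L_\vartheta}{1+\vartheta} d_k^{1+\vartheta}.
\end{equation*}
Now use membership $x_{k+1} \in \cH_k$, which by \eqref{eq:halfspace} reads $\inner{g_k}{x_k-x_{k+1}} \ge \norm{g_k}\, s_k$. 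The deep-cut lower bound $s_k \ge t(x_k)$ then gives $\norm{g_k}\, s_k \ge \norm{g_k}\, t(x_k) = f(x_k) - f_\star$. Substituting, $f(x_k)$ cancels and the claimed bound follows.

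This is the one place where the lower end of the band is used. It is the step I expect to be the crux. One is tempted to compare $f(x_k)$ with $f_\star$ through a point outside $\cX$ (for instance an unconstrained gradient step), and that fails because $f$ may dip below $f_\star$ off $\cX$. The resolution is to use the feasible new iterate itself, together with the depth guarantee that $x_{k+1}$ has advanced at least $s_k \ge t(x_k)$ along $-g_k$. No knowledge of $(\vartheta, L_\vartheta)$ enters the method, which gives universality.

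\textbf{Step 3 (combine).} Let $j \in \{0,\dots,K-1\}$ be an index attaining $\min_k d_k$. Steps 1 and 2 give
\begin{equation*}
\min_{0\le k\le K}\bigl(f(x_k)-f_\star\bigr) \le f(x_{j+1}) - f_\star \le \frac{L_\vartheta}{1+\vartheta}\, d_j^{1+\vartheta} \le \frac{L_\vartheta}{1+\vartheta}\left(\frac{R^2}{K}\right)^{\frac{1+\vartheta}{2}},
\end{equation*}
which is \eqref{eq:band_rate}.
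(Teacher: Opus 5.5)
Your proposal is correct and follows essentially the same route as the paper. Fejér monotonicity comes from $s_k \le s_\star(x_k)$ (Theorem~\ref{thm:projection_fejer_halfspace}), and the bound $f(x_{k+1})-f_\star \le D_f(x_{k+1},x_k)$ comes from $s_k \ge t(x_k)$; the paper packages this as Lemma~\ref{lem:polyak_cut} in level notation, while you derive it inline in depth notation, and the Hölder bound is then applied at the shortest step exactly as in Lemma~\ref{lem:universal_engine}.
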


\begin{proof}[Proof sketch]
    The argument rests on three parts, and the Hölder data enter only in the last.
    First, cutting at least as deep as the Polyak cut:
    If $x_{k+1}$ lies in the Polyak cut at $x_k$, then the linearization of $f$ at $x_k$, evaluated at $x_{k+1}$, is at most $f_\star$. 
    Subtracting it from $f(x_{k+1})$ leaves
    \begin{equation}
        \label{eq:sketch_polyak_cut}
        f(x_{k+1})-f_\star \ \le \ D_f(x_{k+1},x_k) ,
    \end{equation}
    so the function gap may be upper-bounded by the Bregman divergence of a single step (\Cref{lem:polyak_cut}).
    Second, Fej\'er monotonicity (\Cref{thm:projection_fejer_halfspace}): 
    any cut retaining $\cX_\star$ gives $\norm{x_{k+1}-x_\star}^2 \le \norm{x_k-x_\star}^2-\norm{x_{k+1}-x_k}^2$, so the squared step lengths sum to at most $R^2$ and the shortest of $K$ steps obeys $\min_{k<K}\norm{x_{k+1}-x_k}^2 \le R^2/K$.
    Third, Hölder continuity is invoked once, at that short step: 
    $D_f(x_{k+1},x_k) \le \tfrac{L_\vartheta}{1+\vartheta}\norm{x_{k+1}-x_k}^{1+\vartheta}$. 
    Combining the three gives \eqref{eq:band_rate}.
    Neither $\vartheta$ nor $L_\vartheta$ is used to generate the iterates; 
    they appear only in this last line, which is what makes the rate universal.
\end{proof}

The family of projection methods described in \Cref{thm:universal_band} attains the rate $K^{-(1+\vartheta)/2}$ across the whole Hölder scale, the rate of the universal primal gradient method of \citet{nesterov2015universal}. 
There, universality is obtained by a backtracking search for a local Hölder constant;
here, it comes from the geometry of the cut instead.

\subsection{A universal rate for Local LMO}

By part~\ref{it:lmoproj-localizes} of \Cref{thm:lmo_is_projection}, \LLMO cuts at depth at most $t(x_k)$, that is, at or below the shallow endpoint of the deep-cut band from \Cref{thm:universal_band}, and is therefore not covered by the previous result in general.

\begin{restatable}[Universal Hölder rate for \LLMO]{theorem}{localratethm}
\label{thm:localrate}
    Let $\nabla f$ be Hölder continuous with exponent $\vartheta \in [0,1]$ and constant $L_\vartheta$, and let $\{x_k\}$ be generated by \LLMO at the Polyak radius $t_k=t(x_k)$ from $x_0 \in \cX \setminus \cX_\star$, the run stopping if some $x_k \in \cX_\star$.
    Then, for all $K \ge 1$, we have
    \begin{equation}\label{eq:localrate}
        \min_{0\le k\le K}\bigl(f(x_k)-f_\star\bigr)\ \le\
        \max\left\{\ \Lambda_1\,K^{-\frac{1+\vartheta}{2}}\ ,\ \ \Lambda_2\,K^{-\frac{1}{2-\vartheta}}\ \right\},
    \end{equation}
    where $R = \dist(x_0,\cX_\star)$ and $G_\star = \norm{\nabla f(x_\star)}$ for any $x_\star \in \cX_\star$, and
    \begin{equation}\label{eq:lambdas}
        \Lambda_1\eqdef \tfrac{25}{2}\,L_\vartheta R^{1+\vartheta},
        \qquad
        \Lambda_2\eqdef 25 \max\Bigl\{\bigl(L_\vartheta R^2 G_\star^{1-\vartheta}\bigr)^{\frac1{2-\vartheta}},\ G_\star R\Bigr\} .
    \end{equation}
\end{restatable}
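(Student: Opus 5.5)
The plan is to combine the Fejér decrease of \Cref{thm:lmo_is_projection} with a per-step bound on the gap, and then split into two cases according to how large $\norm{g_k}$ is relative to $G_\star$. Write $t_k = t(x_k) = a_k/\norm{g_k}$ and $s_k = s_{\rm lmo}(x_k,t_k)$. Part~\ref{it:lmoproj-distance} of \Cref{thm:lmo_is_projection}, telescoped, gives $\sum_{k<K} t_k^2 \le R^2$. I will use the sharpened form (\Cref{thm:sharp_fejer}) in place of part~\ref{it:lmoproj-distance}. I expect it to credit the ``missing depth'' $t_k - s_k$, i.e.
\begin{equation*}
\norm{x_{k+1}-x_\star}^2 \le \norm{x_k-x_\star}^2 - t_k^2 - c\, t_k\,(t_k - s_k)
\end{equation*}
for some $c>0$. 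The reason is that $x_\star$ lies at depth at least $t(x_k) = t_k$ in the half-space, while the projection lands only at depth $s_k$. Summing then controls both $\sum t_k^2$ and $\sum t_k(t_k-s_k)$ by $R^2$.

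\textbf{Gap at the next iterate.} The descent estimate from \eqref{eq:holder} gives
\begin{equation*}
a_{k+1} \le a_k - \norm{g_k}s_k + \tfrac{L_\vartheta}{1+\vartheta}t_k^{1+\vartheta} = \norm{g_k}(t_k - s_k) + \tfrac{L_\vartheta}{1+\vartheta}t_k^{1+\vartheta}.
\end{equation*}
This is the analogue of \Cref{lem:polyak_cut}, with the shortfall $t_k-s_k$ as an error term. To bound $\norm{g_k}$, I will use the Hölder co-coercivity inequality
\begin{equation*}
D_f(x,x_\star) \ge c_\vartheta L_\vartheta^{-1/\vartheta}\norm{\nabla f(x)-\nabla f(x_\star)}^{(1+\vartheta)/\vartheta}.
\end{equation*}
Together with $D_f(x_k,x_\star) \le a_k$ (first-order optimality gives $\inner{\nabla f(x_\star)}{x_k-x_\star}\ge 0$), this yields $\norm{g_k} \le G_\star + C\,(L_\vartheta a_k^{\vartheta})^{1/(1+\vartheta)}$. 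By \Cref{lem:grad_const_on_Xstar}, $G_\star$ does not depend on the choice of $x_\star$.

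\textbf{Case split.} Choose an index $k<K$ that is simultaneously good for $t_k^2 \lesssim R^2/K$ and $t_k(t_k-s_k)\lesssim R^2/K$, by pigeonhole on the sum of the two series.
\begin{itemize}
\item If $\norm{g_k} \le 2C(L_\vartheta a_k^\vartheta)^{1/(1+\vartheta)}$, the gradient is controlled by the gap. Then $a_k = t_k\norm{g_k}$ together with $t_k\lesssim R/\sqrt K$ solves to $a_k \lesssim L_\vartheta R^{1+\vartheta}K^{-(1+\vartheta)/2}$, which is the $\Lambda_1$ term.
\item Otherwise $\norm{g_k}\le 2G_\star$. I plan to bound $a_{k+1}$ via the display above by $2G_\star(t_k-s_k) + L_\vartheta t_k^{1+\vartheta}$, with $t_k \le a_k/G_\star$ available. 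Balancing $G_\star R^2/(Kt_k)$ against $L_\vartheta t_k^{1+\vartheta}$ should produce the exponent $1/(2-\vartheta)$ and the constant $(L_\vartheta R^2G_\star^{1-\vartheta})^{1/(2-\vartheta)}$. The $G_\star R$ term in $\Lambda_2$ covers the regime where the optimal balancing radius would exceed $R$.
\end{itemize}
The numerical constant $25$ should absorb the pigeonhole factors, $c_\vartheta$, and the factor $2$ in the split.

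\textbf{Main obstacle.} The hardest step is the second case: turning the shortfall $t_k-s_k$ into something summable that beats the naive bound. The naive estimate gives only $a_k \le 2G_\star t_k \lesssim G_\star R/\sqrt K$, which is too weak. If \Cref{thm:sharp_fejer} does not give the $t_k(t_k-s_k)$ credit directly, I will try to derive it first from the projection inequality of part~\ref{it:lmoproj-projects} applied at $x_\star$, using $\inner{g_k/\norm{g_k}}{x_k-x_\star}\ge t_k$ from \eqref{eq:cvx-lower-bound}. Failing that, I will use a potential of the form $\norm{x_k-x_\star}^2 + \gamma a_k$ for a suitable weight $\gamma$.
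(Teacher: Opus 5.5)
Your Case (a) is sound (it is essentially the paper's Case 1). Case (b), however, cannot deliver the exponent $\tfrac{1}{2-\vartheta}$, and the reason is structural.

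\textbf{The guessed credit is too weak.} The credit you expect from the sharpened Fej\'er inequality, $t_k(t_k-s_k)$, is always at most $t_k^2$, because $s_k\ge 0$. So your second series is dominated by the first, and the ``simultaneously good'' index tells you nothing beyond $t_k\le R/\sqrt K$. At that index you only know $t_k-s_k\le\min\{t_k,\,R^2/(Kt_k)\}$.

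\textbf{The balancing step is not available.} The radius $t_k$ is fixed by the algorithm, not a free parameter. In any case, balancing $G_\star R^2/(Kt_k)$ against $L_\vartheta t_k^{1+\vartheta}$ gives the exponent $\tfrac{1+\vartheta}{2+\vartheta}$ and a different constant, not $\tfrac1{2-\vartheta}$ and $(L_\vartheta R^2G_\star^{1-\vartheta})^{1/(2-\vartheta)}$. When $t_k$ is small, the term $R^2/(Kt_k)$ blows up. The only remedy is $a_k\le 2G_\star t_k$, which returns you to $\min\{a_k,a_{k+1}\}\lesssim G_\star R/\sqrt K$. As written, the proposal therefore proves only $\min_k a_k\lesssim G_\star R\,K^{-1/2}+L_\vartheta R^{1+\vartheta}K^{-(1+\vartheta)/2}$. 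That is the naive rate you yourself flag as too weak.

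\textbf{Progress must be aggregated.} The gain cannot come from a single well-chosen step. By \Cref{lem:overshoot_lower_bound}, one step removes at most a $\rho_k$ fraction of the gap, $a_{k+1}\ge(1-\rho_k)a_k$, so progress has to accumulate over many steps.

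\textbf{The missing ingredients.} The first is the actual strength of \Cref{thm:sharp_fejer}. At the Polyak radius, $\Gamma_k=\norm{g_k}(t_k-s_k)$ and $\ell_k=\norm{g_k}s_k$. The credit is therefore $2t_k^2(t_k-s_k)/s_k$: your guess multiplied by $1/\rho_k=t_k/s_k$, which can be arbitrarily large. This yields the budget $\sum_{k<K}t_k^2/\rho_k\le R^2$.

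The second ingredient is the paper's three-way count of the steps:
\begin{enumerate}
\item Steps with $a_k>\bar a$ are few, by the budget.
\item Stalling steps, with $\rho_k<2\delta_k$ and $\delta_k=c\,t_k^{\vartheta}/\norm{g_k}$, each consume at least $a_k^{2-\vartheta}/\bigl(2c((1+\lambda)G_\star)^{1-\vartheta}\bigr)$ of the budget. This is exactly where the exponent $\tfrac1{2-\vartheta}$ and the constant $(L_\vartheta R^2G_\star^{1-\vartheta})^{1/(2-\vartheta)}$ originate.
\item Productive steps are counted by Cauchy--Schwarz against the potential $\Phi_k=a_k^{-2}$. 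This uses the fact that gain times Fej\'er cost, $(\rho_k/a_k^2)(t_k^2/\rho_k)$, equals $1/\norm{g_k}^2$ independently of $\rho_k$.
\end{enumerate}
Your fallback potential $\norm{x_k-x_\star}^2+\gamma a_k$ is not developed enough to assess. Without the $1/\rho_k$ factor and a multi-step accounting of this kind, the $K^{-1/(2-\vartheta)}$ term is out of reach.
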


\begin{proof}[Proof sketch]
    The first ingredient of the previous argument is unavailable: 
    the \LLMO cut is no deeper than the Polyak cut, so $x_{k+1}$ need not lie in the latter and \eqref{eq:sketch_polyak_cut} may fail. 
    The shortfall in depth, weighted by the gradient norm, is what we call the overshoot,
    \begin{equation}
        \label{def:overshoot}
        \Gamma_k \ \eqdef \ \norm{g_k}\rbr{t(x_k)-s_{\rm lmo}(x_k,t_k)} \ \ge \ 0 ,
    \end{equation}
    which vanishes precisely when the \LLMO cut is the Polyak cut (\Cref{lem:overshoot}).
    The exact identity $f(x_{k+1})-f_\star = D_f(x_{k+1},x_k) + \Gamma_k$ shows that the amount by which \eqref{eq:sketch_polyak_cut} fails is exactly the overshoot. 
    Since $D_f \ge 0$, no \LLMO step can push the suboptimality below $\Gamma_k$, however smooth $f$ may be (\Cref{lem:overshoot_lower_bound}).

    The quantity that our analysis rests on is, with $\ell_k \eqdef \inner{g_k}{x_k-x_{k+1}}$ the decrease of the linear model,
    \begin{equation}
        \label{def:rho}
        \rho_k \ \eqdef \ \frac{\ell_k}{a_k} \ \in \ (0,1] ,
    \end{equation}
    the fraction of the first-order decrease available at $x_k$ that the oracle actually captures; 
    since $\ell_k = \norm{g_k}s_{\rm lmo}(x_k,t_k)$ and $a_k = \norm{g_k}t(x_k)$, the range $(0,1]$ is part~\ref{it:lmoproj-localizes} of \Cref{thm:lmo_is_projection} and $\Gamma_k = (1-\rho_k)a_k$.
    At the Polyak radius, $\rho_k$ is the cosine of the angle between $g_k$ and the step $x_k-x_{k+1}$: 
    it equals $1$ when the step runs straight down the gradient and tends to $0$ when the feasible set forces the step almost orthogonal to $g_k$.

    A small $\rho_k$ is bad for the function value---the floor $\Gamma_k$ leaves $a_{k+1} \ge (1-\rho_k)a_k$, so one step removes at most a $\rho_k$ fraction of the gap, while the descent inequality delivers only $a_{k+1} \le a_k(1-\rho_k+\delta_k)$, with $\delta_k \eqdef \tfrac{L_\vartheta}{1+\vartheta}t_k^{1+\vartheta}/a_k$ the Hölder error---but it is good for the distance.
    The projection identity of \Cref{thm:lmo_is_projection} sharpens Fej\'er monotonicity to
    \begin{equation}
        \label{eq:sketch_sharp_fejer}
        \norm{x_{k+1}-x_\star}^2 \ \le \ \norm{x_k-x_\star}^2-t_k^2\,\frac{2-\rho_k}{\rho_k}
    \end{equation}
    (\Cref{thm:sharp_fejer}), in which the overshoot amplifies the contraction by the factor $(2-\rho_k)/\rho_k \ge 1/\rho_k$: a nearly tangential step barely lowers $f$, but pulls the iterate closer to $\cX_\star$ than its length alone would suggest.
    Summing gives the refined budget $\sum_{k<K}t_k^2/\rho_k \le R^2$.
    The two fit together because
    \begin{equation}
        \label{eq:sketch_gaincost}
        \underbrace{\frac{\rho_k}{a_k^2}}_{\text{gain in }\Phi_k \eqdef a_k^{-2}} \cdot \underbrace{\frac{t_k^2}{\rho_k}}_{\text{Fej\'er cost}} \ = \ \frac{t_k^2}{a_k^2} \ = \ \frac{1}{\norm{g_k}^2}
    \end{equation}
    does not depend on $\rho_k$ at all: 
    a step that makes little progress in function value pays for it in the Fej\'er budget by exactly the reciprocal factor, so no choice of $\rho_k$ is free.
    
    The remainder of the proof rests on an accounting argument.
    We write $\bar a$ for the level below which $G_\star$ dominates the Hölder part of the gradient bound; 
    above it the refined budget alone delivers the exponent $\tfrac{1+\vartheta}{2}$.
    Below $\bar a$, a two-sided gradient bound pins $\norm{g_k}$ to within a constant factor of $G_\star$, and the $K$ steps are counted in three groups: 
    those above $\bar a$ are few by the budget; 
    those that stall, in the sense $\rho_k < 2\delta_k$, are few because each consumes a fixed share of the budget; 
    and the productive remainder is counted by Cauchy--Schwarz against $\Phi_k$ using \eqref{eq:sketch_gaincost}, a count needed only once $K$ is large. 
    This produces the exponent $\tfrac1{2-\vartheta}$, and \eqref{eq:localrate} is the larger of the two bounds.
\end{proof}

Two remarks on \eqref{eq:localrate} are in order.
First, the second term disappears when the gradient vanishes at the solution: 
if $G_\star = 0$, so that every constrained optimum is also an unconstrained one, then \eqref{eq:localrate} holds with $\Lambda_2$ replaced by $0$, and \LLMO attains the exponent $\tfrac{1+\vartheta}{2}$ of \Cref{thm:universal_band}, differing only in the constant, although its cut is never deeper than the shallow endpoint of the band.

Second, at $\vartheta=0$ the two exponents in \eqref{eq:localrate} coincide, at $\tfrac12$, and a direct argument improves the constant: 
a Hölder-$0$ bound forces $\norm{g_k}\le G_\star+L_0$ along the whole trajectory, and Fej\'er monotonicity alone then gives
\begin{equation}
    \min_{0\le k\le K} \bigl(f(x_k)-f_\star\bigr)\ \le\ \frac{\rbr{G_\star+L_0}R}{\sqrt K} ,
\end{equation}
the same rate as \eqref{eq:localrate} with a smaller constant (\Cref{prop:rate_zero}).

\section{Experiments}
\label{sec:experiments}

To illustrate the qualitative difference between \LLMO and the deep-cut methods, we consider the two-dimensional objective function
\begin{equation}
    \label{eq:2d-example}
    f(x,y) = \frac{L}{1 + \vartheta} \abs{x}^{1 + \vartheta} + G y
\end{equation}
to be optimized over the half-space $\cX = \curlybr{(x,y) \st y \ge 0}$,
where $G \ge 0$ and $L > 0$ are constants and $\vartheta \in (0,1]$ governs Hölder continuity as defined in \eqref{eq:holder}; 
the endpoint $\vartheta = 0$ is excluded to ensure differentiability of $f$.

For $G > 0$, the minimizer is unique, $x_\star = (0,0)$, and lies on the boundary of the feasible set $\cX$; 
for $G = 0$ the solution set is the ray $\curlybr{(0,y) \st y \ge 0}$, of which $(0,0)$ is the point nearest to the initial point $x_0 = (R,0)$, for some $R > 0$, located on the boundary of $\cX$.
For $G > 0$ the affine part of the objective pushes against the constraint $y \ge 0$, and for every $G \ge 0$ we have $\nabla f(x_\star) = (0,G)$, so $G_\star = \norm{\nabla f(x_\star)} = G$.
The $\vartheta$-Hölder continuous part has its Hölder singularity at $x = 0$, and \eqref{eq:holder} holds with $L_\vartheta = 2^{1-\vartheta} L$, independent of $G$;
varying $G$ therefore varies how constrained the optimizer is while $\vartheta$ and $L_\vartheta$ stay fixed.
The two coordinates thus separate the quantities governing \Cref{thm:universal_band,thm:localrate}: the Hölder term governs the growth of $f$ along the face $y=0$, on which the iterates travel, while the affine term contributes nothing to the value there.

We run \LLMO at the Polyak radius $t_k = t(x_k)$ and, for comparison, the projection method \eqref{eq:proj_family} at the Polyak cut $s_k = t(x_k)$, the shallow endpoint of the deep-cut band.
Both iterations remain on the face $y = 0$, so, writing $x_k = (u_k,0)$ with $u_k > 0$ and using $a_k = \tfrac{L}{1+\vartheta}u_k^{1+\vartheta}$ and $g_k = \rbr{Lu_k^{\vartheta},\, G}$, the iterates are available in closed form:
\begin{equation}
    \label{eq:2d-dynamics}
    u_{k+1}^{\rm lmo} \ = \ u_k - \frac{a_k}{\norm{g_k}} , \qquad \norm{g_k} = \sqrt{L^2u_k^{2\vartheta}+G^2} ,
    \qquad\text{whereas}\qquad u_{k+1}^{\rm proj} \ = \ \frac{\vartheta}{1+\vartheta}\,u_k .
\end{equation}
The \LLMO update is the step of length $t_k$ supplied by part~\ref{it:lmoproj-stepsize} of \Cref{thm:lmo_is_projection}, the oracle returning the point $(u_k-t_k,0)$ on the face for every $G \ge 0$; 
the projection update, on the other hand, does not involve $G$ at all.

\begin{figure}[t]
    \centering
    \includegraphics[width=0.5\textwidth]{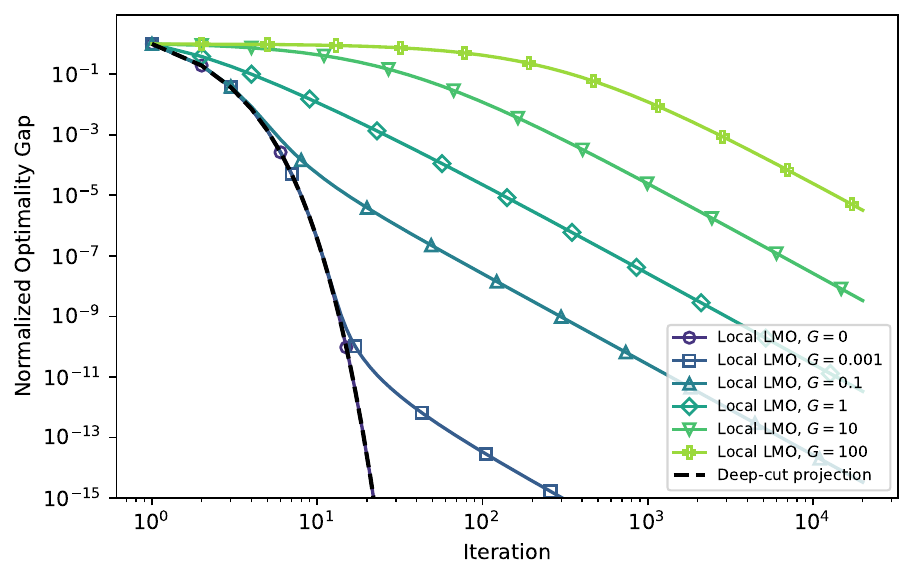}
    \caption{Normalized optimality gap $a_k/a_0$ on \eqref{eq:2d-example} with $\vartheta = 1/2$ and $L = R = 1$: \LLMO at the Polyak radius for a range of values of $G$, and the projection method \eqref{eq:proj_family} at the Polyak cut, which does not depend on $G$. \LLMO coincides with it at $G = 0$; for larger $G$, it first traverses a plateau whose length grows linearly in $G$.}
    \label{fig:stall}
\end{figure}

For large $G$, the \LLMO trajectory in \Cref{fig:stall} is essentially flat over a long initial phase whose length grows in proportion to $G$, while the projection method converges linearly and is unaffected.
The mechanism is best understood through \eqref{eq:2d-dynamics}: 
at a given position $u_k$ the suboptimality $a_k$ does not depend on $G$, because the affine term vanishes on the face, whereas the gradient norm does.
The Polyak radius $t_k = a_k/\norm{g_k} \le a_k/G$ is therefore throttled, and since the step length equals $t_k$ and $a_k$ decreases here, the iterate advances by at most $a_0/G$ per iteration. 
Covering the initial distance $R$ therefore takes at least $GR/a_0$ iterations, which is the plateau one observes in \Cref{fig:stall}.

In \eqref{eq:2d-example}, the \LLMO step moves along the face of the feasible set, so the fraction \eqref{def:rho} is
\begin{equation}
    \label{eq:2d-rho}
    \rho_k \ = \ \frac{Lu_k^{\vartheta}}{\sqrt{L^2u_k^{2\vartheta}+G^2}} \ = \ \frac{\inner{g_k}{x_k-x_{k+1}}}{\norm{g_k}\,\norm{x_k-x_{k+1}}} ,
\end{equation}
and a large $G$ tilts the gradient towards the normal of the constraint, driving $\rho_k$ to zero.
The projection method is not throttled in this way: 
it moves the full $\dist(x_k,\cX\cap\cH_k)$ to the Polyak cut, which here equals $t_k/\rho_k$, whereas \LLMO moves only $t_k = \dist(x_k,\cH_k)$.
This slack separates \Cref{thm:universal_band} from \Cref{thm:localrate}. 
It vanishes when $G = 0$, where the lowest curve of \Cref{fig:stall} shows the two methods coinciding: 
the gradient is parallel to the face, the overshoot is zero (\Cref{lem:overshoot}), and both updates reduce to $u_{k+1} = \tfrac{\vartheta}{1+\vartheta}u_k$.

\section{Conclusion}
\label{sec:conclusion}

We have shown that \LLMO---a single linear minimization over the feasible set intersected with a ball around the current iterate introduced as a projection-free approach---does, in fact, implement a projection.
As long as the ball radius does not exceed the Polyak radius $\rbr{f(x_k)-f_\star}/\norm{\nabla f(x_k)}$, the point the oracle returns is the Euclidean projection of $x_k$ onto the feasible set intersected with a half-space whose outward normal is the gradient and which separates $x_k$ from every minimizer.
\LLMO is therefore a projection method that happens to be implementable by a linear oracle, and the half-space it projects onto is one it selects itself.
\LLMO is part of the family of projection methods \eqref{eq:proj_family} which may be indexed by the depth of the half-spaces.

Cutting at least as deep as the Polyak cut, while still retaining every minimizer, is enough for the rate $K^{-(1+\vartheta)/2}$ on convex objectives with $\vartheta$-H\"older continuous gradient, and the guarantee holds for the whole family of such cuts.
\LLMO may cut shallower than the Polyak cut, and the shortfall in depth separates it from the deep-cut methods.
Charging the shortfall to the distance to the solution set rather than to the function value sharpens the Fej\'er contraction instead of destroying it, and yields a universal rate for \LLMO, as well: 
the same $K^{-(1+\vartheta)/2}$ when the gradient vanishes at the solution, and $K^{-1/(2-\vartheta)}$ otherwise.
Both exponents are at least the Frank--Wolfe exponent $\vartheta$ over the whole range $[0,1]$, and neither guarantee requires the feasible set to be bounded.

\begin{ack}
The research reported in this publication was supported by funding from King Abdullah University of Science and Technology (KAUST): i) KAUST Baseline Research Scheme, ii) CRG Grant ORFS-CRG12-2024-6460, and iii) Center of Excellence for Generative AI, under award number 5940.

The authors thank Jan Niklas Kolzenburg for fruitful discussions.
\end{ack}

\bibliographystyle{plainnat}
\bibliography{local-lmo-projection}

\clearpage
\appendix
\part*{Appendix}

\tableofcontents

\newpage

\section{Proof of \Cref{thm:lmo_is_projection}: \algname{Local LMO} is a projection method}
\label{app:identification}

\paragraph{Proof outline.}
\Cref{thm:lmo_is_projection} establishes that the \algname{Local LMO} step is the projection of $x$ onto $\cX \cap \cH(\nabla f(x), \alpha_{\rm lmo}(x,t))$, that its length is exactly $t$, and that the squared distance to any minimizer decreases by at least $t^2$.

Our proof of this result takes the following form.
\begin{enumerate}
    \item    
    \Cref{lem:zero_grad,lem:generic-decomposition,lem:normal_cone_ball} establish basic geometric properties that are used throughout this and the following sections.

    \item
    \Cref{thm:alpha_star_exact} shows that the set of minimizers, $\cX_\star$, lies in every sufficiently shallow half-space $\cH = \cH(\nabla f(x),\alpha)$ with $\alpha \ge \alpha_\star(x)$.
    From this, we establish Fejér monotonicity, a key ingredient for the remaining proof steps, for the whole family of projection methods of the form \eqref{eq:proj_family} in \Cref{thm:projection_fejer_halfspace}.

    \item
    \Cref{thm:alpha_star_exact,lem:alpha_t_vs_alpha_cvx,lem:alpha_t_le_alpha_lmo} show that $\alpha_\star(x) \le \alpha_{\rm cvx}(x) \le \alpha_{\rm proj}(x,t) \le \alpha_{\rm lmo}(x,t)$ for admissible $t$, with $\alpha_{\rm cvx} = \alpha_{\rm proj}$ when $t = t(x)$.

    \item
    \Cref{thm:lmo_boundary_and_projection} shows that the linear minimizer over $\cX \cap \cB(x,t)$ lies on the sphere $\norm{z-x} = t$, and that it satisfies the variational characterization of the projection of $x$ onto $\cX \cap \cH(\nabla f(x),\alpha_{\rm lmo}(x,t))$; that the distance from $x$ to this set equals $t$ is then a consequence.
\end{enumerate}

\subsection{Levels and depths}
\label{sub:levels_and_depths}

The main text indexes a cut by its depth, the distance from the current iterate to the cut, as in \eqref{eq:halfspace}.
The proofs below are more naturally written in terms of its level, the value taken by the linear functional $\inner{g}{\cdot}$ on the cut.
For $g \in \R^d$ and $\alpha \in \R$ we write
\begin{equation}
    \label{eq:halfspace_level}
    \cH(g,\alpha) \ \eqdef \ \curlybr{z\in\R^d \st \inner{g}{z} \le \alpha} ,
\end{equation}
so that, for $g \neq 0$ and $\alpha \le \inner{g}{x}$, the two descriptions are interchangeable through
\begin{equation}
    \label{eq:dictionary}
    \cH_x(g,s) \ = \ \cH\rbr{g,\ \inner{g}{x}-s\norm{g}} , \qquad\text{equivalently}\qquad \cH(g,\alpha) \ = \ \cH_x\rbr{g,\ \tfrac{\inner{g}{x}-\alpha}{\norm{g}}} .
\end{equation}
On that range the correspondence $\alpha \leftrightarrow s = \rbr{\inner{g}{x}-\alpha}/\norm{g}$ is an order-reversing affine bijection onto the depths $s \ge 0$: a larger level is a shallower cut and a larger half-space.
Four levels occur below, namely $\alpha_\star(x)$ of \eqref{eq:alpha_star_only_def}, $\alpha_{\rm cvx}(x)$ of \eqref{eq:alpha_cvx_def}, $\alpha_{\rm proj}(x,t)$ of \eqref{eq:alpha-proj-def} and $\alpha_{\rm lmo}(x,t)$ of \eqref{eq:alpha-lmo-def}.
For $x \in \cX\setminus\cX_\star$ they are the levels of the cuts at depths
\begin{equation}
    \label{eq:level_depth_table}
    \alpha_\star(x) \leftrightarrow s_\star(x) , \qquad \alpha_{\rm cvx}(x) \leftrightarrow t(x) , \qquad \alpha_{\rm proj}(x,t) \leftrightarrow t , \qquad \alpha_{\rm lmo}(x,t) \leftrightarrow s_{\rm lmo}(x,t) ,
\end{equation}
with $s_\star$ as in \eqref{def:s-star} and $s_{\rm lmo}$ as in \eqref{def:s-lmo}; 
the last of these is the identity $s_{\rm lmo}(x,t) = \bigl(\inner{\nabla f(x)}{x}-\alpha_{\rm lmo}(x,t)\bigr)/\norm{\nabla f(x)}$, which holds because $\alpha_{\rm lmo}(x,t) = \inner{\nabla f(x)}{x_{\rm lmo}}$.
Consequently the level chain $\alpha_\star(x) \le \alpha_{\rm cvx}(x) \le \alpha_{\rm proj}(x,t) \le \alpha_{\rm lmo}(x,t)$ established in \Cref{thm:alpha_star_exact,lem:alpha_t_vs_alpha_cvx,lem:alpha_t_le_alpha_lmo} is the same as the chain of depths $s_{\rm lmo}(x,t) \le t \le t(x) \le s_\star(x)$ asserted in part~\ref{it:lmoproj-localizes} of \Cref{thm:lmo_is_projection}.
In the same way, a cut localizes $\cX_\star$ if and only if $s \le s_\star(x)$, equivalently $\alpha \ge \alpha_\star(x)$; 
and it lies in the deep-cut band, $t(x) \le s \le s_\star(x)$, if and only if $\alpha_\star(x) \le \alpha \le \alpha_{\rm cvx}(x)$.

\subsection{Preliminary lemmas}
\label{sub:preliminaries}

\begin{lemma}
\label{lem:zero_grad}
    If $x \in \cX \setminus \cX_\star$, then $\nabla f(x) \neq 0$.
\end{lemma}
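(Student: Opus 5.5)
I will argue by contraposition: suppose $x \in \cX$ and $\nabla f(x) = 0$, and show that $x \in \cX_\star$. The only ingredient needed is the first-order characterization of convexity, which holds under \Cref{ass:main} because $f$ is convex and differentiable on all of $\R^d$. It gives
\begin{equation*}
    f(z) \ \ge \ f(x) + \inner{\nabla f(x)}{z - x} \qquad \forall z \in \R^d .
\end{equation*}

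Inserting $\nabla f(x) = 0$ turns this into $f(z) \ge f(x)$ for every $z \in \R^d$. In particular it holds for every $z \in \cX$, so $f(x) \le \min_{z\in\cX} f(z) = f_\star$. Since $x \in \cX$, the reverse inequality $f(x) \ge f_\star$ holds trivially, and therefore $f(x) = f_\star$, that is, $x \in \cX_\star$. This contradicts $x \in \cX \setminus \cX_\star$, so $\nabla f(x) \neq 0$.

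No step here is a genuine obstacle. The only point to check is that the gradient inequality is valid for all $z \in \R^d$ and not merely on $\cX$; this is ensured by the assumption that $f$ is convex and differentiable on the whole of $\R^d$. The argument does not need closedness of $\cX$ or non-emptiness of $\cX_\star$ beyond the fact that $f_\star$ is well defined.
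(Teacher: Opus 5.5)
Your proposal is correct and follows the paper's proof essentially verbatim: contraposition, the first-order convexity inequality on all of $\R^d$ with $\nabla f(x)=0$ giving that $x$ is a global minimizer, hence a minimizer over $\cX$, hence $x\in\cX_\star$. The only cosmetic difference is that you spell out $f(x)=f_\star$ explicitly and close with ``contradiction'' where a direct contrapositive already suffices.
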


\begin{proof}
    We show the contrapositive.
    Suppose $x \in \cX$ and $\nabla f(x) = 0$.
    Then, by convexity and differentiability of $f$ (\Cref{ass:main}), we have
    \begin{equation*}
        f(y) \ge f(x) + \inner{\nabla f(x)}{y-x} = f(x), \qquad \forall y \in \R^d.
    \end{equation*}
    Thus, $x$ is a global minimizer of $f$, and, in particular, a minimizer over the feasible set $\cX$.
    Hence, $x \in \cX_\star$.
\end{proof}

\begin{lemma}
\label{lem:grad_const_on_Xstar}
    The gradient of $f$ is constant on the solution set: $\nabla f(u) = \nabla f(v)$ for all $u,v \in \cX_\star$.
    In particular, $G_\star \eqdef \norm{\nabla f(x_\star)}$ does not depend on the choice of $x_\star \in \cX_\star$.
\end{lemma}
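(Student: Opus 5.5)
The plan is to use the standard fact that a convex differentiable function is affine along segments in its level set of minimizers. Combined with convexity, that affinity pins down the gradient. Fix $u,v \in \cX_\star$ and set $w \eqdef \tfrac12(u+v)$; since $\cX_\star$ is convex, $w \in \cX_\star$, so $f(u)=f(v)=f(w)=f_\star$.

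First I would apply first-order convexity at $w$ in the directions of $u$ and $v$. This gives $f(u) \ge f(w) + \inner{\nabla f(w)}{u-w}$ and $f(v) \ge f(w) + \inner{\nabla f(w)}{v-w}$. Because $u-w = -(v-w)$ and all three values equal $f_\star$, the two inner products are non-positive and sum to zero. Hence both vanish, which says that $f$ agrees with its linearization at $w$ at the point $u$, i.e. $D_f(u,w)=0$, and likewise $D_f(v,w)=0$.

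The key step is to upgrade a vanishing Bregman divergence $D_f(u,w)=0$ to equality of gradients $\nabla f(u)=\nabla f(w)$, using only convexity and differentiability (no Lipschitz gradient is assumed in \Cref{ass:main}). I would use the three-point inequality. For any $z \in \R^d$, convexity at $u$ gives $f(z) \ge f(u) + \inner{\nabla f(u)}{z-u}$. I would instead derive a lower bound through $w$ and compare, via the following argument. The function $h(z) \eqdef f(z) - f(w) - \inner{\nabla f(w)}{z-w}$ is convex, nonnegative, and differentiable, and it satisfies $h(u)=0$. So $u$ is a global minimizer of $h$, whence $\nabla h(u)=0$, i.e. $\nabla f(u)=\nabla f(w)$. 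Symmetrically, $\nabla f(v)=\nabla f(w)$, and therefore $\nabla f(u)=\nabla f(v)$. The statement about $G_\star$ is then immediate.

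The only potentially delicate point is that last upgrade; the auxiliary function $h$ handles it cleanly. So I do not expect a real obstacle, beyond checking that $h$ is nonnegative everywhere on $\R^d$, which is first-order convexity at $w$. This requires $f$ to be convex on all of $\R^d$, which \Cref{ass:main} provides.
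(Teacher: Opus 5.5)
Your proof is correct. It follows the paper's two-stage skeleton: first show that a Bregman divergence between minimizers vanishes, then upgrade this to equality of gradients. Both stages, however, are justified differently.

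For the first stage, the paper gets $D_f(u,v)=0$ directly from the constrained first-order optimality condition $\inner{\nabla f(v)}{u-v}\ge 0$ at $v$. You instead apply plain convexity at the midpoint $w=\tfrac12(u+v)\in\cX_\star$ in the two opposite directions $\pm(u-w)$. Your route never uses that the common value is a minimum over $\cX$, only that $f(u)=f(v)=f(w)$. It therefore shows more generally that a convex differentiable $f$ on $\R^d$ has constant gradient on any convex set on which it is constant. The paper's route is more direct, since it needs no auxiliary point, and it relies on an optimality condition the paper uses again anyway (e.g.\ in \Cref{lem:grad_bound}).

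For the upgrade, the paper writes $f(z)\ge f(u)+\inner{\nabla f(v)}{z-u}$ for all $z$ and invokes $\partial f(u)=\curlybr{\nabla f(u)}$ for differentiable convex $f$. That inequality is exactly $D_f(z,v)\ge D_f(u,v)=0$, i.e.\ $u$ minimizes $D_f(\cdot,v)$ over $\R^d$. Your Fermat-rule argument for $h=D_f(\cdot,w)$ is therefore the same fact in a more self-contained form. Convexity of $h$ is not even needed there, only its nonnegativity on all of $\R^d$. As you note, that nonnegativity requires convexity of $f$ on the whole space, which matters because $u$ may lie on the boundary of $\cX$. The sentence about the three-point inequality is a dead end you abandon; it is harmless but could be deleted.
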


\begin{proof}
    Let $u,v \in \cX_\star$.
    First-order optimality of $v$ for $f$ over $\cX$ gives $\inner{\nabla f(v)}{u-v} \ge 0$, while $f(u) = f(v) = f_\star$, so
    \begin{equation*}
        D_f(u,v) \ = \ f(u)-f(v)-\inner{\nabla f(v)}{u-v} \ = \ -\inner{\nabla f(v)}{u-v} \ \le \ 0 .
    \end{equation*}
    Convexity of $f$ gives $D_f(u,v) \ge 0$, hence $D_f(u,v) = 0$, that is $\inner{\nabla f(v)}{u-v} = 0$.
    Therefore, for every $w \in \R^d$, convexity at $v$ yields
    \begin{equation*}
        f(w) \ \ge \ f(v)+\inner{\nabla f(v)}{w-v} \ = \ f(u)+\inner{\nabla f(v)}{w-u} ,
    \end{equation*}
    the last equality because $f(v) = f(u)$ and $\inner{\nabla f(v)}{u-v} = 0$.
    This is the subgradient inequality for $\nabla f(v)$ at $u$, so $\nabla f(v) \in \partial f(u) = \curlybr{\nabla f(u)}$, the last set equality because $f$ is differentiable \citep[Theorem~25.1]{rockafellar1997convex}.
\end{proof}

\begin{lemma}
\label{lem:generic-decomposition}
    For any three points $x_{+}, x, z \in \R^d$,
    \begin{equation*}
        \norm{x_{+} - z}^2 = \norm{ x - z}^2 - 2\inner{x - x_{+}}{x_{+} - z} -\norm{x - x_{+}}^2.
    \end{equation*}
\end{lemma}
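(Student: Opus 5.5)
The identity is purely algebraic, so I would prove it by direct expansion around the intermediate point $x_{+}$. The plan is to write $x - z = (x - x_{+}) + (x_{+} - z)$ and expand the square of its norm using bilinearity of the inner product:
\begin{equation*}
    \norm{x - z}^2 \ = \ \norm{x - x_{+}}^2 + 2\inner{x - x_{+}}{x_{+} - z} + \norm{x_{+} - z}^2 .
\end{equation*}
Rearranging, by moving the first two terms on the right to the left-hand side, gives exactly the claimed expression for $\norm{x_{+} - z}^2$.

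No earlier result from the paper is needed, and there is no genuine obstacle. The only point requiring care is the bookkeeping of signs: the cross term must be written with the ordering $\inner{x - x_{+}}{x_{+} - z}$ used in the statement, which matches the orientation in which the lemma will later be applied. In that application, $x_{+}$ is a projection of $x$ onto a set containing $z$, so the variational inequality gives $\inner{x - x_{+}}{x_{+} - z} \ge 0$, and the identity then yields Fejér-type decrease.
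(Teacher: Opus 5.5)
Your proof is correct and matches the paper's argument exactly: both decompose $x - z = (x - x_{+}) + (x_{+} - z)$, expand the squared norm, and rearrange. Your remark on the later application is also accurate; in \Cref{thm:projection_fejer_halfspace} the projection inequality supplies the nonnegativity of the cross term $\inner{x_k - x_{k+1}}{x_{k+1} - x_\star}$.
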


\begin{proof}
    We write $x-z = (x-x_{+})+(x_{+}-z)$. 
    Hence,
    \begin{equation*}
        \norm{x-z}^2 = \norm{(x-x_{+}) + (x_{+}-z)}^2 = \norm{x-x_{+}}^2 + \norm{x_{+}-z}^2 + 2\inner{x-x_{+}}{x_{+}-z}.
    \end{equation*}
    Rearranging this identity yields
    \begin{equation*}
        \norm{x_{+}-z}^2 = \norm{x-z}^2 - 2\inner{x - x_{+}}{x_{+} - z} -\norm{x - x_{+}}^2.
    \end{equation*}
\end{proof}

\begin{lemma}
\label{lem:normal_cone_ball}
    Let $x \in \R^d$ and $t > 0$, and define the closed Euclidean ball
    \begin{equation*}
        \cB(x,t) \eqdef \curlybr{z\in \R^d \st \norm{z-x}\le t}.
    \end{equation*}
    Then, for any $y \in \R^d$, the normal cone of $\cB(x,t)$ at $y$ is given by
    \begin{equation*}
        N_{\cB(x,t)}(y) =
        \begin{cases}
        \emptyset, & y \notin \cB(x,t),\\[1ex]
        \curlybr{0}, & \norm{y-x} < t,\\[1ex]
        \curlybr{\lambda (y-x)\st \lambda\ge 0}, & \norm{y-x} = t.
        \end{cases}
    \end{equation*}
\end{lemma}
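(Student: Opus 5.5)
The plan is to prove the three cases directly from the definition of the normal cone, $N_{\cC}(y) \eqdef \curlybr{v \st \inner{v}{z-y} \le 0 \ \forall z \in \cC}$ for $y \in \cC$, with the convention $N_{\cC}(y) = \emptyset$ for $y \notin \cC$. The first case is then immediate from this convention. For the remaining two cases, we fix $y \in \cB(x,t)$ and characterize the vectors $v$ with $\inner{v}{z-y} \le 0$ for all $z \in \cB(x,t)$.

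For the interior case $\norm{y-x} < t$, we use that $y$ has a neighbourhood $\cB(y,\varepsilon) \subseteq \cB(x,t)$ with $\varepsilon = t - \norm{y-x} > 0$. If $v \ne 0$, the point $z = y + \varepsilon v/\norm{v}$ lies in the ball and gives $\inner{v}{z-y} = \varepsilon\norm{v} > 0$, a contradiction. Hence $v = 0$. Conversely, $0$ belongs to every normal cone.

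For the boundary case $\norm{y-x} = t$, we prove two inclusions.

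\emph{($\supseteq$)} Take $v = \lambda(y-x)$ with $\lambda \ge 0$ and any $z$ in the ball. Writing $z - y = (z-x) - (y-x)$ and applying Cauchy--Schwarz gives
\begin{equation*}
    \inner{y-x}{z-y} \;=\; \inner{y-x}{z-x} - t^2 \;\le\; t\norm{z-x} - t^2 \;\le\; 0 .
\end{equation*}

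\emph{($\subseteq$)} This is the only step needing an idea. Given $v \in N_{\cB(x,t)}(y)$, we test the single point $z = x + t\,v/\norm{v}$ (assuming $v \neq 0$), which lies in the ball. Normality yields $\inner{v}{x-y} + t\norm{v} \le 0$, i.e.
\begin{equation*}
    \inner{v}{y-x} \;\ge\; t\norm{v} \;=\; \norm{v}\,\norm{y-x} .
\end{equation*}
This is equality in Cauchy--Schwarz, so $v$ is a nonnegative multiple of $y-x$. Equivalently, $y$ maximizes $\inner{v}{\cdot}$ over the ball, and that maximizer is $x + t v/\norm{v}$; since $y$ attains it, $y - x = t v/\norm{v}$. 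Thus $v = \lambda(y-x)$ with $\lambda = \norm{v}/t \ge 0$.

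The main obstacle is this ($\subseteq$) direction, and it is handled cleanly by the Cauchy--Schwarz equality case. The rest is routine bookkeeping.
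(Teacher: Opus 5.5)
Your proof is correct. The empty case, the interior case and the boundary inclusion $\supseteq$ match the paper's argument essentially verbatim. The genuine difference is in the boundary inclusion $\subseteq$.

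The paper decomposes $v = \mu u + w$ into a radial part along $u = (y-x)/t$ and a tangential part $w \perp u$. It tests $z = x$ to obtain $\mu \ge 0$. It then rules out $w \neq 0$ by testing the curve $z_\beta = x + t(\cos\beta\, u + \sin\beta\, \hat w)$ on the sphere, using $1-\cos\beta \le \beta^2/2$ and $\sin\beta \ge 2\beta/\pi$ to get $\inner{v}{z_\beta - y} > 0$ for small $\beta > 0$.

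You instead test the single point $z = x + t v/\norm{v}$, the maximizer of $\inner{v}{\cdot}$ over the ball. This gives $\inner{v}{y-x} \ge \norm{v}\norm{y-x}$, hence equality in Cauchy--Schwarz. Since both vectors are nonzero, this forces $v = (\norm{v}/t)(y-x)$, with the sign and the direction settled at once; $v = 0$ is covered with $\lambda = 0$.

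Your route is shorter and avoids the small-parameter limit and the trigonometric estimates. It exploits the fact that the ball's support point in direction $v$ is explicit; equivalently, $v \in N_{\cB(x,t)}(y)$ if and only if $\inner{v}{y}$ equals the support value $\inner{v}{x} + t\norm{v}$. The paper's tangential argument is local: it only uses that the boundary bends away from the tangent plane in every tangential direction. That makes it the template you would adapt to sets whose support points have no closed form. For the ball itself, your one-point test is the cleaner proof.
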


\begin{proof}
    Recall that, for a convex set $\cC$ and $y \in \cC$, $N_\cC(y) = \curlybr{v \in \R^d \st \inner{v}{z-y} \le 0 \ \forall z \in \cC}$, while $N_\cC(y) = \emptyset$ for $y \notin \cC$.
    This settles the case $y \notin \cB(x,t)$. 

    For the remainder of this proof, we write $\cB \eqdef \cB(x,t)$.

    \emph{Case $\norm{y-x} < t$.}
    Clearly $0 \in N_\cB(y)$. 
    Let $v \in N_\cB(y)$ and suppose $v \neq 0$.
    Set $\varepsilon \eqdef t - \norm{y-x} > 0$ and $z \eqdef y + \varepsilon v/\norm{v}$.
    Then $\norm{z-x} \le \norm{y-x} + \varepsilon = t$, so $z \in \cB$, but $\inner{v}{z-y} = \varepsilon\norm{v} > 0$, a contradiction.
    Hence $N_\cB(y) = \curlybr{0}$.

    \emph{Case $\norm{y-x} = t$.}
    Let $\lambda \ge 0$ and $z \in \cB$. 
    By Cauchy--Schwarz,
    \begin{align*}
        \inner{\lambda(y-x)}{z-y} & = \lambda\rbr{\inner{y-x}{z-x} - \norm{y-x}^2} \\
        & \le \lambda\rbr{\norm{y-x}\norm{z-x} - t^2} \\
        & \le \lambda\rbr{t^2 - t^2} = 0 ,        
    \end{align*}
    so $\lambda(y-x) \in N_\cB(y)$.
    
    Next, let $v \in N_\cB(y)$, set $u \eqdef (y-x)/t$ (a unit vector), and decompose $v = \mu u + w$ with $\mu \eqdef \inner{v}{u}$ and $\inner{w}{u} = 0$.
    Choosing $z = x \in \cB$ gives $0 \ge \inner{v}{x-y} = -t\mu$, so $\mu \ge 0$.
    Suppose $w \neq 0$, let $\hat w \eqdef w/\norm{w}$, and for $\beta \in (0,\pi/2]$ consider $z_\beta \eqdef x + t\rbr{\cos\beta\, u + \sin\beta\, \hat w}$.
    Since $u \perp \hat w$ are unit vectors, $\norm{z_\beta - x} = t$, so $z_\beta \in \cB$.
    Using $z_\beta - y = t\rbr{(\cos\beta - 1)u + \sin\beta\, \hat w}$, $1-\cos\beta \le \beta^2/2$ and $\sin\beta \ge 2\beta/\pi$,
    \begin{equation*}
        \inner{v}{z_\beta - y} = t\rbr{\mu(\cos\beta-1) + \norm{w}\sin\beta} \ge t\beta\rbr{\tfrac{2}{\pi}\norm{w} - \tfrac{\mu}{2}\beta} \ ,
    \end{equation*}
    which is positive for all sufficiently small $\beta > 0$, contradicting $v \in N_\cB(y)$.
    Therefore $w = 0$ and $v = \mu u = (\mu/t)(y-x)$ with $\mu/t \ge 0$.
\end{proof}

\subsection{Localization and Fejér monotonicity}
\label{sub:localization_and_fejer_monotonicity}

The following two results underlie parts \ref{it:lmoproj-localizes} and \ref{it:lmoproj-distance} of \Cref{thm:lmo_is_projection}, respectively.
\Cref{thm:projection_fejer_halfspace} further underlies the Fej\'er arguments of \Cref{app:band,app:localrate}.

\begin{theorem}
\label{thm:alpha_star_exact}
    Fix a point $x \in \R^d$.
    \begin{enumerate}
        \item
        The quantity
        \begin{equation}
            \label{eq:alpha_star_only_def}
            \alpha_\star(x) \eqdef \sup_{x_\star\in \cX_\star}\inner{\nabla f(x)}{x_\star}
        \end{equation}
        is finite.
        In particular,
        \begin{equation}
            \label{eq:alpha_star_le_alpha_cvx_1}
            \alpha_\star(x) \leq \alpha_{\rm cvx}(x),
        \end{equation}
        where
        \begin{equation}
            \label{eq:alpha_cvx_def}
            \alpha_{\rm cvx}(x) \eqdef \inner{\nabla f(x)}{x} - \rbr{f(x)-f_\star} < +\infty.
        \end{equation}
        \item
        For $\alpha \in \R$, the following statements are equivalent:
        \begin{enumerate}
            \item
            $\cX_\star \subseteq \cH(\nabla f(x),\alpha)$,

            \item
            $\inner{\nabla f(x)}{x_\star} \le \alpha$ for every $x_\star \in \cX_\star$,
            
            \item
            $\alpha_\star(x) \le \alpha$.
        \end{enumerate}
        \item
        The following inclusions hold:
        \begin{equation}
            \label{eq:alpha_star}
            \cX_\star \subseteq \cH(\nabla f(x),\alpha_\star(x)) \subseteq \cH(\nabla f(x),\alpha_{\rm cvx}(x)).
        \end{equation}
    \end{enumerate}
\end{theorem}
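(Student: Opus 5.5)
The whole proof rests on one inequality: the first-order convexity bound \eqref{eq:cvx-lower-bound}, written in level form. Fix $x \in \R^d$ and any $x_\star \in \cX_\star$. Convexity and differentiability of $f$ (\Cref{ass:main}) give $f_\star = f(x_\star) \ge f(x) + \inner{\nabla f(x)}{x_\star - x}$. Rearranging,
\begin{equation*}
    \inner{\nabla f(x)}{x_\star} \ \le \ \inner{\nabla f(x)}{x} - \rbr{f(x) - f_\star} \ = \ \alpha_{\rm cvx}(x) .
\end{equation*}
This bound is uniform in $x_\star$, and the right-hand side is a real number because $f$ is real-valued and $f_\star$ is finite. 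Taking the supremum over $x_\star$ therefore gives $\alpha_\star(x) \le \alpha_{\rm cvx}(x) < +\infty$, which is \eqref{eq:alpha_star_le_alpha_cvx_1}. For finiteness from below, note that $\cX_\star$ is non-empty by \Cref{ass:main}. The supremum is then at least $\inner{\nabla f(x)}{x_\star}$ for some fixed minimizer, so $\alpha_\star(x) > -\infty$. This settles part~1. The point $x$ is arbitrary here; no condition $x \notin \cX_\star$ or $\nabla f(x) \neq 0$ is needed.

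For part~2, (a)$\Leftrightarrow$(b) is the definition \eqref{eq:halfspace_level}: $x_\star \in \cH(\nabla f(x),\alpha)$ means exactly $\inner{\nabla f(x)}{x_\star} \le \alpha$, and we quantify over all $x_\star \in \cX_\star$. Statement (b)$\Leftrightarrow$(c) is the defining property of a supremum. The number $\alpha$ is an upper bound of the set $\{\inner{\nabla f(x)}{x_\star} \st x_\star \in \cX_\star\}$ if and only if it dominates its least upper bound $\alpha_\star(x)$. This uses that the supremum is finite, which part~1 provides.

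For part~3, the first inclusion is part~2 applied with $\alpha = \alpha_\star(x)$, since (c) then holds trivially. The second inclusion is the monotonicity of half-spaces in the level: if $\alpha \le \alpha'$, then $\inner{\nabla f(x)}{z} \le \alpha$ implies $\inner{\nabla f(x)}{z} \le \alpha'$. We apply this with $\alpha = \alpha_\star(x)$ and $\alpha' = \alpha_{\rm cvx}(x)$, using \eqref{eq:alpha_star_le_alpha_cvx_1}.

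There is no real obstacle. The only care needed is to state finiteness properly: the upper bound comes from convexity, the lower bound from non-emptiness of $\cX_\star$. This is what justifies the supremum argument in part~2.
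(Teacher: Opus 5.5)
Your proof is correct and follows the paper's argument essentially line for line. First-order convexity gives the uniform bound $\inner{\nabla f(x)}{x_\star} \le \alpha_{\rm cvx}(x)$, non-emptiness of $\cX_\star$ gives $\alpha_\star(x) > -\infty$, part~2 is the definition of $\cH(g,\alpha)$ plus the least-upper-bound property, and part~3 combines part~2 with monotonicity of the half-space in the level. One minor remark: the equivalence (b)$\Leftrightarrow$(c) also holds for a supremum in the extended reals, so your appeal to finiteness there is harmless but not actually needed.
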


\begin{proof}
    \begin{enumerate}
        \item
        Let $x_\star \in \cX_\star$ be arbitrary.
        Since $f$ is convex and differentiable on $\R^d$ (\Cref{ass:main}), the first-order convexity inequality gives
        \begin{equation*}
            f_\star = f(x_\star) \ge f(x)+\inner{\nabla f(x)}{x_\star-x} .
        \end{equation*}
        Rearranging, we obtain
        \begin{equation*}
            \inner{\nabla f(x)}{x_\star} \le \inner{\nabla f(x)}{x} - \rbr{f(x)-f_\star} = \alpha_{\rm cvx}(x) < + \infty.
        \end{equation*}
        Since this inequality holds for every $x_\star \in \cX_\star$, we may take the supremum over $x_\star \in \cX_\star$ and get
        \begin{equation*}
            \alpha_\star(x) = \sup_{x_\star\in \cX_\star}\inner{\nabla f(x)}{x_\star} \le \alpha_{\rm cvx}(x) < +\infty.
        \end{equation*}
        Moreover, since $\cX_\star \neq \emptyset$ (\Cref{ass:main}) the supremum is over a non-empty set, hence $\alpha_\star(x) >-\infty$.
        This proves \eqref{eq:alpha_star_le_alpha_cvx_1}.
        \item
        Let $g = \nabla f(x)$.
        By definition of $\cH(g,\alpha)$, for any $x_\star \in \cX_\star$,
        \begin{equation*}
            x_\star \in \cH(g,\alpha) \qquad\Longleftrightarrow\qquad \inner{g}{x_\star} \le \alpha.
        \end{equation*}
        Therefore,
        \begin{equation*}
            \cX_\star \subseteq \cH(g,\alpha) \qquad\Longleftrightarrow\qquad \inner{g}{x_\star} \le \alpha \quad \forall x_\star \in \cX_\star.
        \end{equation*}
        This proves the equivalence of i. and ii.

        Next, statement ii. is equivalent to saying that $\alpha$ is an upper bound on the set
        \begin{equation*}
            \curlybr{\inner{g}{x_\star}\st x_\star\in \cX_\star}.
        \end{equation*}
        By the definition \eqref{eq:alpha_star_only_def} of $\alpha_\star$, this is equivalent to
        \begin{equation*}
            \sup_{x_\star\in \cX_\star}\inner{g}{x_\star} \le \alpha,
        \end{equation*}
        that is, $\alpha_\star(x) \le \alpha$. 
        This proves the equivalence of ii. and iii.
        \item
        This follows immediately by combining part~2 and \eqref{eq:alpha_star_le_alpha_cvx_1}, and using the definition \eqref{eq:halfspace_level}.
    \end{enumerate}
\end{proof}

\begin{theorem}
\label{thm:projection_fejer_halfspace}
    Fix $x_0 \in \cX$, and define
    \begin{equation*}
        x_{k+1} \eqdef \Proj_{\cX \cap \cH(\nabla f(x_k),\alpha_k)}(x_k), \qquad k = 0,1,\dots,
    \end{equation*}
    where $\alpha_k \geq \alpha_\star(x_k)$.
    Then, for all $k \geq 0$, we have
    \begin{equation}
        \label{eq:sq-distance-shrink}
        \norm{x_{k+1} - x_{\star}}^2 \leq \norm{x_k - x_{\star}}^2 - \norm{x_k - x_{k+1}}^2 \qquad \forall x_\star \in \cX_\star,
    \end{equation}
    and therefore, for all $K \geq 1$, we have
    \begin{equation}
        \label{eq:min-sq-distance-bound}
        \min_{0\leq k \leq K-1} \norm{x_k - x_{k+1}}^2 \leq \frac{\dist^2(x_0,\cX_\star)}{K}.
    \end{equation}
\end{theorem}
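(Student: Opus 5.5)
The plan is to use the variational characterization of the Euclidean projection together with the three-point identity of \Cref{lem:generic-decomposition}. Fix $k \ge 0$ and write $\cC_k \eqdef \cX \cap \cH(\nabla f(x_k),\alpha_k)$. First I check that the iteration is well defined. The set $\cC_k$ is closed and convex, being the intersection of $\cX$ with a closed half-space. It is non-empty: since $\alpha_k \ge \alpha_\star(x_k)$, part~2 of \Cref{thm:alpha_star_exact} gives $\cX_\star \subseteq \cH(\nabla f(x_k),\alpha_k)$. Also $\cX_\star \subseteq \cX$, and $\cX_\star \neq \emptyset$ by \Cref{ass:main}. Hence $\emptyset \ne \cX_\star \subseteq \cC_k$, and the projection $x_{k+1}$ exists and is unique.

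Next I apply the projection inequality. Since $x_{k+1} = \Proj_{\cC_k}(x_k)$, we have $\inner{x_k - x_{k+1}}{z - x_{k+1}} \le 0$ for every $z \in \cC_k$. I take $z = x_\star \in \cX_\star \subseteq \cC_k$, which gives $\inner{x_k - x_{k+1}}{x_{k+1} - x_\star} \ge 0$.

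Then I invoke \Cref{lem:generic-decomposition} with $x_+ = x_{k+1}$, $x = x_k$ and $z = x_\star$:
\begin{equation*}
\norm{x_{k+1}-x_\star}^2 = \norm{x_k-x_\star}^2 - 2\inner{x_k-x_{k+1}}{x_{k+1}-x_\star} - \norm{x_k-x_{k+1}}^2 .
\end{equation*}
Dropping the non-positive middle term yields \eqref{eq:sq-distance-shrink}. If the run has reached $\cX_\star$, the step is $0$ and the inequality still holds, so no special handling is needed.

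For \eqref{eq:min-sq-distance-bound}, I sum \eqref{eq:sq-distance-shrink} over $k = 0,\dots,K-1$ with a fixed $x_\star$. The sum telescopes to
\begin{equation*}
\sum_{k=0}^{K-1}\norm{x_k-x_{k+1}}^2 \le \norm{x_0-x_\star}^2 - \norm{x_K - x_\star}^2 \le \norm{x_0-x_\star}^2 .
\end{equation*}
The minimum of $K$ nonnegative terms is at most their average, so $\min_{k<K}\norm{x_k-x_{k+1}}^2 \le \norm{x_0-x_\star}^2/K$. Since $\cX_\star$ is non-empty, closed and convex, I then choose $x_\star = \Proj_{\cX_\star}(x_0)$, which turns the right-hand side into $\dist^2(x_0,\cX_\star)/K$.

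There is no real obstacle here. The only point needing care is the first step: localization $\cX_\star \subseteq \cC_k$ has to be derived from $\alpha_k \ge \alpha_\star(x_k)$ via \Cref{thm:alpha_star_exact}. This one fact supplies both the non-emptiness of $\cC_k$ and the admissibility of $x_\star$ as a test point in the projection inequality.
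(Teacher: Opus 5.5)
Your proposal is correct and follows essentially the same route as the paper: localization $\cX_\star \subseteq \cX \cap \cH(\nabla f(x_k),\alpha_k)$ via \Cref{thm:alpha_star_exact}, the projection inequality tested at $x_\star$, the three-point identity of \Cref{lem:generic-decomposition}, then telescoping with $x_\star = \Proj_{\cX_\star}(x_0)$. Your explicit check that $\cC_k$ is non-empty (so the iteration is well defined) is a small addition that the paper leaves implicit.
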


\begin{proof}
    By \Cref{lem:generic-decomposition}, we know that
    \begin{equation*}
        \norm{x_{k+1} - x_\star}^2 = \norm{ x_k - x_\star}^2 - 2\inner{x_k - x_{k+1}}{x_{k+1} - x_\star} - \norm{x_k - x_{k+1}}^2
    \end{equation*}
    for all $x_\star \in \cX_\star$.
    Hence, we only need to show that
    \begin{equation}
        \label{eqL:i-fd8798fgfg}
        \inner{x_k - x_{k+1}}{x_{k+1} - x_\star} \ge 0.
    \end{equation}

    Let $g_k = \nabla f(x_k)$.
    Since $\alpha_k \geq \alpha_\star(x_k)$, \Cref{thm:alpha_star_exact} implies that $\cX_\star \subseteq \cH(g_k,\alpha_k)$.
    Since also $\cX_\star \subseteq \cX$, we have
    \begin{equation*}
        x_\star \in \cX \cap \cH(g_k,\alpha_k), \qquad \forall x_\star \in \cX_\star.
    \end{equation*}
    Because $x_{k+1}$ is the Euclidean projection of $x_k$ onto the non-empty closed convex set $\cX \cap \cH(g_k,\alpha_k)$, the standard projection optimality condition yields
    \begin{equation*}
        \inner{x_k - x_{k+1}}{y - x_{k+1}} \le 0, \qquad \forall y \in \cX \cap \cH(g_k,\alpha_k).
    \end{equation*}
    Since $\cX_\star \subseteq \cX \cap \cH(g_k,\alpha_k)$, the above inequality holds for all $y = x_\star \in \cX_\star$, establishing \eqref{eqL:i-fd8798fgfg}.

    Finally, \eqref{eq:min-sq-distance-bound} follows by summing up inequalities \eqref{eq:sq-distance-shrink} for $k = 0,\dots,K-1$.
    Indeed, the inequalities
    \begin{align*}
        \norm{x_1-x_\star}^2 & \leq \norm{x_0-x_\star}^2 - \norm{x_0-x_1}^2\\
        \norm{x_2-x_\star}^2 & \leq \norm{x_1-x_\star}^2 - \norm{x_1-x_2}^2\\
        & \vdots \\
        \norm{x_K-x_\star}^2 & \leq \norm{x_{K-1}-x_\star}^2 - \norm{x_{K-1}-x_K}^2
    \end{align*}
    add up to
    \begin{equation*}
        \norm{x_K-x_\star}^2 \leq \norm{x_0-x_\star}^2 - \sum_{k=0}^{K-1}\norm{x_{k} - x_{k+1}}^2,
    \end{equation*}
    and after rearranging and dividing by $K$, we get
    \begin{equation*}
        \min_{0\leq k \leq K-1} \norm{x_k - x_{k+1}}^2 \leq \frac{1}{K}\sum_{k=0}^{K-1}\norm{x_{k} - x_{k+1}}^2 \leq \frac{\norm{x_0-x_\star}^2 - \norm{x_K-x_\star}^2}{K} \leq \frac{\norm{x_0-x_\star}^2 }{K}.
    \end{equation*}
    It only remains to choose
    \begin{equation*}
        x_\star = \argmin_{z \in \cX_\star} \norm{x_0 - z},
    \end{equation*}
    so that $\norm{x_0-x_\star} = \dist(x_0,\cX_\star)$.
\end{proof}

\subsection{Three thresholds}
\label{sub:three_thresholds}

For $x \in \R^d$ and $t \in \R$, we define
\begin{equation}
    \label{eq:alpha-proj-def}
    \alpha_{\rm proj}(x,t) \eqdef \inner{\nabla f(x)}{x} - t\norm{\nabla f(x)} .
\end{equation}

\begin{lemma}
\label{lem:alpha_t_vs_alpha_cvx}
    Fix $x \in \R^d$ and assume $\nabla f(x) \neq 0$.
    \begin{enumerate}
        \item
        If $t \le t(x)$, then
        \begin{equation*}
            \alpha_{\rm cvx}(x) \le \alpha_{\rm proj}(x,t).
        \end{equation*}
        
        \item
        If $t = t(x)$, then
        \begin{equation*}
            \alpha_{\rm cvx}(x) = \alpha_{\rm proj}(x,t).
        \end{equation*}
    \end{enumerate}
    Consequently, if $t \le t(x)$, then
    \begin{equation}
        \label{eq:alpha_t_cvx_inclusion}
        \cX_\star \subseteq \cH(\nabla f(x),\alpha_{\rm cvx}(x)) \subseteq \cH(\nabla f(x),\alpha_{\rm proj}(x,t)).
    \end{equation}
\end{lemma}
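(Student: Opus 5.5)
The argument is a direct computation from the definitions \eqref{eq:alpha_cvx_def} and \eqref{eq:alpha-proj-def}, together with \Cref{thm:alpha_star_exact} for the inclusion chain. Since $\nabla f(x) \neq 0$, the Polyak radius $t(x) = (f(x)-f_\star)/\norm{\nabla f(x)}$ is well defined, and we can rewrite the gap as $f(x)-f_\star = t(x)\norm{\nabla f(x)}$. Substituting this into \eqref{eq:alpha_cvx_def} gives
\begin{equation*}
    \alpha_{\rm cvx}(x) \ = \ \inner{\nabla f(x)}{x} - t(x)\norm{\nabla f(x)} \ = \ \alpha_{\rm proj}(x,t(x)) .
\end{equation*}
This is already part~2.

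For part~1, we use that $t \mapsto \alpha_{\rm proj}(x,t)$ is affine with slope $-\norm{\nabla f(x)} < 0$, hence strictly decreasing. Therefore $t \le t(x)$ implies $\alpha_{\rm proj}(x,t) \ge \alpha_{\rm proj}(x,t(x)) = \alpha_{\rm cvx}(x)$. Explicitly, the difference is $\alpha_{\rm proj}(x,t) - \alpha_{\rm cvx}(x) = (t(x)-t)\norm{\nabla f(x)} \ge 0$.

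For the inclusion \eqref{eq:alpha_t_cvx_inclusion}, the first inclusion $\cX_\star \subseteq \cH(\nabla f(x),\alpha_{\rm cvx}(x))$ is part~3 of \Cref{thm:alpha_star_exact}, namely \eqref{eq:alpha_star}. It holds for any $x \in \R^d$ and relies on $f_\star$ being attained, which is guaranteed by \Cref{ass:main}. The second inclusion is monotonicity of the level half-spaces \eqref{eq:halfspace_level} in $\alpha$: if $\alpha \le \alpha'$, then $\inner{g}{z} \le \alpha$ implies $\inner{g}{z} \le \alpha'$, so $\cH(g,\alpha) \subseteq \cH(g,\alpha')$. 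We apply this with $\alpha = \alpha_{\rm cvx}(x)$ and $\alpha' = \alpha_{\rm proj}(x,t)$ from part~1.

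There is no real obstacle here. The only points needing care are that $t(x)$ requires $\nabla f(x) \neq 0$, which is assumed, and that $t$ may be any real number (even negative), since the monotonicity argument does not use positivity of $t$.
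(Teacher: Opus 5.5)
Your proof is correct and follows essentially the same route as the paper: you compute $\alpha_{\rm proj}(x,t)-\alpha_{\rm cvx}(x) = \norm{\nabla f(x)}\,(t(x)-t)$ directly from the definitions, obtain the first inclusion from \Cref{thm:alpha_star_exact}, and obtain the second from monotonicity of $\cH(\nabla f(x),\cdot)$ in the level. The only cosmetic difference is that you read off $\alpha_{\rm cvx}(x)=\alpha_{\rm proj}(x,t(x))$ first and then use the monotonicity of $t\mapsto\alpha_{\rm proj}(x,t)$, whereas the paper writes the difference once and specializes it to both parts.
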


\begin{proof}
    By definition, $\alpha_{\rm cvx}(x) = \inner{\nabla f(x)}{x}-\rbr{f(x)-f_\star}$, while $\alpha_{\rm proj}(x,t) = \inner{\nabla f(x)}{x}-t\norm{\nabla f(x)}$. 
    Hence
    \begin{equation*}
        \alpha_{\rm proj}(x,t)-\alpha_{\rm cvx}(x) = \rbr{f(x)-f_\star}-t\norm{\nabla f(x)}.
    \end{equation*}
    Using the definition \eqref{def:polyak-radius} of $t(x)$, we can rewrite this as
    \begin{equation*}
        \alpha_{\rm proj}(x,t)-\alpha_{\rm cvx}(x) = \norm{\nabla f(x)}\rbr{t(x)-t}.
    \end{equation*}
    Therefore, if $t \le t(x)$, then $\alpha_{\rm proj}(x,t)-\alpha_{\rm cvx}(x) \ge 0,$ which proves $\alpha_{\rm cvx}(x) \le \alpha_{\rm proj}(x,t)$. 
    If $t = t(x)$, then $\alpha_{\rm proj}(x,t)-\alpha_{\rm cvx}(x) = 0$, and thus $\alpha_{\rm cvx}(x) = \alpha_{\rm proj}(x,t).$
    The first inclusion in \eqref{eq:alpha_t_cvx_inclusion} is \eqref{eq:alpha_star_le_alpha_cvx_1} of \Cref{thm:alpha_star_exact} together with part~2 of that theorem; the second is $\alpha_{\rm cvx}(x) \le \alpha_{\rm proj}(x,t)$ and the monotonicity of $\cH(\nabla f(x),\cdot)$.
\end{proof}

\begin{lemma}
\label{lem:alpha_t_le_alpha_lmo}
    Fix $x \in \cX \setminus \cX_\star$.
    For $t > 0$, define
    \begin{equation}
        \label{eq:alpha-lmo-def}
        \alpha_{\rm lmo}(x,t) \eqdef \min_{z\in \cX \cap \cB(x,t)} \inner{\nabla f(x)}{z}.
    \end{equation}    
    Then
    \begin{equation}
        \label{eq:alpha_t_lmo}
        \alpha_{\rm proj}(x,t) \le \alpha_{\rm lmo}(x,t).
    \end{equation}
    Consequently, if $0 < t \le t(x)$, then
    \begin{equation}
        \label{eq:alpha_t_lmo_inclusion}
        \cX_\star \subseteq \cH(\nabla f(x),\alpha_{\rm proj}(x,t)) \subseteq \cH(\nabla f(x),\alpha_{\rm lmo}(x,t)).
    \end{equation}
\end{lemma}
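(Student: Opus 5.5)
The inequality $\alpha_{\rm proj}(x,t) \le \alpha_{\rm lmo}(x,t)$ should follow from a single application of Cauchy--Schwarz on the ball, so I would prove it directly. First I would check that the minimum in \eqref{eq:alpha-lmo-def} is attained and finite. The set $\cX \cap \cB(x,t)$ is non-empty because it contains $x \in \cX$. It is also compact, being the intersection of a closed set with a closed ball. The linear functional $z \mapsto \inner{\nabla f(x)}{z}$ is continuous, so the minimum is attained at some $z^\circ \in \cX \cap \cB(x,t)$.

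For every $z \in \cX \cap \cB(x,t)$, Cauchy--Schwarz together with $\norm{z-x} \le t$ gives
\begin{equation*}
    \inner{\nabla f(x)}{z} \ = \ \inner{\nabla f(x)}{x} + \inner{\nabla f(x)}{z-x} \ \ge \ \inner{\nabla f(x)}{x} - \norm{\nabla f(x)}\,t \ = \ \alpha_{\rm proj}(x,t) .
\end{equation*}
Taking the minimum over $z$, or simply evaluating at $z = z^\circ$, yields \eqref{eq:alpha_t_lmo}. This step uses neither $x \notin \cX_\star$ nor $t \le t(x)$. The hypothesis $x \in \cX$ is needed only for non-emptiness. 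The hypothesis $x \notin \cX_\star$ ensures $\nabla f(x) \neq 0$ by \Cref{lem:zero_grad}, which the consequence below requires through \Cref{lem:alpha_t_vs_alpha_cvx}.

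For the inclusions \eqref{eq:alpha_t_lmo_inclusion}, assume $0 < t \le t(x)$. By \Cref{lem:zero_grad}, $\nabla f(x) \neq 0$, so \eqref{eq:alpha_t_cvx_inclusion} of \Cref{lem:alpha_t_vs_alpha_cvx} applies and gives $\cX_\star \subseteq \cH(\nabla f(x),\alpha_{\rm proj}(x,t))$. The second inclusion follows from \eqref{eq:alpha_t_lmo}, since $\cH(g,\alpha) = \curlybr{z \st \inner{g}{z} \le \alpha}$ is monotone non-decreasing in the level $\alpha$.

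I do not expect a real obstacle. The only point needing care is writing $\min$ rather than $\inf$ in \eqref{eq:alpha-lmo-def}, which is justified by the compactness argument above.
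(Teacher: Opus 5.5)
Your proof is correct and matches the paper's argument essentially step for step. Cauchy--Schwarz on $\cX\cap\cB(x,t)$ gives $\inner{\nabla f(x)}{z} \ge \alpha_{\rm proj}(x,t)$ pointwise, and the inclusions follow from \eqref{eq:alpha_t_cvx_inclusion} of \Cref{lem:alpha_t_vs_alpha_cvx} together with monotonicity of $\cH(\nabla f(x),\cdot)$ in the level; your compactness remark justifying $\min$ rather than $\inf$ is a small addition the paper leaves implicit.
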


\begin{proof}
    Let $g = \nabla f(x)$, $\alpha_{\rm lmo} = \alpha_{\rm lmo}(x,t)$ and $\alpha_{\rm proj} = \alpha_{\rm proj}(x,t)$.

    \paragraph{Step 1: Non-zero gradient.}
    Since $x \notin \cX_\star$, we have $g \neq 0$ (\Cref{lem:zero_grad}).

    \paragraph{Step 2: Inequality between the thresholds.}
    Fix any $z \in \cX \cap \cB(x,t)$.
    Since $z \in \cB(x,t)$, we have $\norm{z-x} \le t$. 
    Therefore, by the Cauchy--Schwarz inequality,
    \begin{equation*}
        \inner{g}{z} = \inner{g}{x}+\inner{g}{z-x} \ge \inner{g}{x}-\norm{g}\,\norm{z-x} \ge \inner{g}{x}-t\norm{g} = \alpha_{\rm proj}.
    \end{equation*}
    Since this holds for every $z \in \cX \cap \cB(x,t)$, taking the minimum over $z \in \cX \cap \cB(x,t)$ yields
    \begin{equation*}
        \alpha_{\rm lmo} = \min_{z\in \cX \cap \cB(x,t)} \inner{g}{z} \ge \alpha_{\rm proj}.
    \end{equation*}
    This proves \eqref{eq:alpha_t_lmo}.

    \paragraph{Step 3: Relationship between the half-spaces.}
    Now assume in addition that $0 < t \le t(x)$.
    By \eqref{eq:alpha_t_cvx_inclusion} of \Cref{lem:alpha_t_vs_alpha_cvx}, we have
    \begin{equation*}
        \cX_\star \subseteq \cH(g,\alpha_{\rm proj}).
    \end{equation*}
    Moreover, the inclusion
    \begin{equation*}
        \cH(g,\alpha_{\rm proj}) \subseteq \cH(g,\alpha_{\rm lmo})
    \end{equation*}
    follows from \eqref{eq:alpha_t_lmo}.
    Combining the last two inclusions establishes \eqref{eq:alpha_t_lmo_inclusion}.
\end{proof}

\subsection{Identification, step length, and the proof of \Cref{thm:lmo_is_projection}}

\begin{theorem}
\label{thm:lmo_boundary_and_projection}
    Fix $x \in \cX \setminus \cX_\star$, choose
    \begin{equation}
        \label{eq:polyak_radius_bound_combined}
        0 < t \le t(x) \eqdef \frac{f(x)-f_\star}{\norm{\nabla f(x)}},
    \end{equation}
    and let
    \begin{equation}
        \label{eq:lmo_step_combined}
        x_{\rm lmo} \in \argmin_{z\in \cX \cap \cB(x,t)} \inner{\nabla f(x)}{z},
    \end{equation}
    with
    \begin{equation}
        \label{eq:alpha_lmo_combined}
        \alpha_{\rm lmo}(x,t) \eqdef \min_{z\in \cX \cap \cB(x,t)} \inner{\nabla f(x)}{z} = \inner{\nabla f(x)}{x_{\rm lmo}} .
    \end{equation}
    Then,
    \begin{enumerate}
        \item \label{it:bdry-sphere}
        $x_{\rm lmo}$ lies on the boundary of the ball $\cB(x,t)$, that is,
        \begin{equation}
            \label{eq:lmo_step_has_length_t_combined}
            \norm{x_{\rm lmo}-x} = t;
        \end{equation}

        \item \label{it:bdry-proj}
        $x_{\rm lmo}$ is the Euclidean projection of $x$ onto the intersection of $\cX$ with the half-space
        \begin{equation*}
            \cH_{\rm lmo} \eqdef \cH(\nabla f(x),\alpha_{\rm lmo}(x,t)),
        \end{equation*}
        that is,
        \begin{equation}
            \label{eq:lmo_equals_projection_halfspace_combined}
            x_{\rm lmo} = \Proj_{\cX \cap \cH_{\rm lmo}}(x).
        \end{equation}
    \end{enumerate}
\end{theorem}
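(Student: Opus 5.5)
We argue from the first-order optimality condition of the subproblem \eqref{eq:lmo_step_combined}. Write $g \eqdef \nabla f(x)$, which is non-zero by \Cref{lem:zero_grad}. The set $\cX\cap\cB(x,t)$ is compact, convex and non-empty, since it contains $x$. The ball has non-empty interior containing $x\in\cX$, so a constraint qualification holds and the normal cone of the intersection splits as $N_\cX(x_{\rm lmo})+N_{\cB(x,t)}(x_{\rm lmo})$. By \Cref{lem:normal_cone_ball} there are then $n\in N_\cX(x_{\rm lmo})$ and $\lambda\ge0$ with
\[
g+n+\lambda(x_{\rm lmo}-x)=0,
\]
where $\lambda=0$ whenever $\norm{x_{\rm lmo}-x}<t$.

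\textbf{Step 1: the minimizer is not $x$.} We first show $x_{\rm lmo}\neq x$ and $\inner{g}{x_{\rm lmo}}<\inner{g}{x}$. Pick $x_\star\in\cX_\star$ and set $z=(1-\mu)x+\mu x_\star$ with $\mu=\min\{1,t/\norm{x-x_\star}\}$. Then $z\in\cX\cap\cB(x,t)$ by convexity, and
\[
\inner{g}{z-x}=\mu\inner{g}{x_\star-x}\le-\mu(f(x)-f_\star)<0 .
\]

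\textbf{Step 2: part~\ref{it:bdry-sphere}, by a case split on $\lambda$.} If $\lambda>0$, the vector $\lambda(x_{\rm lmo}-x)$ is a non-zero element of $N_{\cB(x,t)}(x_{\rm lmo})$ by Step 1. That cone is $\{0\}$ in the interior, so $\norm{x_{\rm lmo}-x}=t$.

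If $\lambda=0$, then $-g\in N_\cX(x_{\rm lmo})$, and testing with any $x_\star\in\cX$ gives $\inner{g}{x_\star-x_{\rm lmo}}\ge0$. In the other direction, first-order convexity, Cauchy--Schwarz and admissibility give
\[
\inner{g}{x_\star}\le\inner{g}{x}-t(x)\norm{g}\le\inner{g}{x}-t\norm{g}\le\inner{g}{x}-\norm{g}\norm{x_{\rm lmo}-x}\le\inner{g}{x_{\rm lmo}}.
\]
Hence every inequality in this chain is an equality. In particular $\norm{x_{\rm lmo}-x}=t$, and equality in Cauchy--Schwarz forces $x-x_{\rm lmo}=t\,g/\norm{g}$.

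\textbf{Step 3: part~\ref{it:bdry-proj}.} We verify the variational inequality $\inner{x-x_{\rm lmo}}{y-x_{\rm lmo}}\le0$ for all $y\in\cX\cap\cH_{\rm lmo}$. Membership $y\in\cH_{\rm lmo}$ means exactly $\inner{g}{y-x_{\rm lmo}}\le0$, by \eqref{eq:alpha_lmo_combined}. Also $x_{\rm lmo}\in\cX\cap\cH_{\rm lmo}$ trivially.

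If $\lambda>0$, take the inner product of the KKT identity with $y-x_{\rm lmo}$. The terms $\inner{g}{y-x_{\rm lmo}}$ and $\inner{n}{y-x_{\rm lmo}}$ are both $\le0$, which leaves
\[
\lambda\inner{x_{\rm lmo}-x}{y-x_{\rm lmo}}\ge0,
\]
and this is the claim after dividing by $\lambda$.

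If $\lambda=0$, Step 2 gives $x-x_{\rm lmo}=(t/\norm{g})\,g$. The claim then reduces to $\inner{g}{y-x_{\rm lmo}}\le0$, which is membership in $\cH_{\rm lmo}$.

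Since projection onto a closed convex set is unique, this also shows that the argmin is a singleton. Every minimizer satisfies the same characterization, so all minimizers coincide with the projection.

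\textbf{Main obstacle.} The delicate step is the $\lambda=0$ case of Step 2, where admissibility $t\le t(x)$ must be used. It requires carefully producing a chain of inequalities that closes on itself, so that all of them become equalities. A secondary technical point is justifying the normal-cone sum rule; the Slater-type condition ($x\in\cX$ lies in the interior of the ball) handles it.
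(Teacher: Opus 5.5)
Your proposal is correct and follows essentially the paper's route: the KKT relation $g+n+\lambda(x_{\rm lmo}-x)=0$ from the normal-cone sum rule, a non-zero-step argument along the segment toward a minimizer, a case split on $\lambda$ in which the $\lambda=0$ case forces a chain of inequalities to become equalities, and a check of the projection's variational inequality in both cases. The differences are cosmetic. Your $\mu=\min\{1,t/\norm{x-x_\star}\}$ avoids having to show $t\le\norm{x-x_\star}$. You get uniqueness from uniqueness of the projection onto the common set $\cX\cap\cH_{\rm lmo}$, whereas the paper notes that a convex set lying on a sphere is a singleton. Also, in the $\lambda=0$ case you should test with $x_\star\in\cX_\star$ rather than any point of $\cX$, since the convexity step needs $f(x_\star)=f_\star$.
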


\begin{proof}
    Let $g \eqdef \nabla f(x)$.

    \paragraph{Step 1: Non-zero gradient.}
    Since $x \notin \cX_\star$, we have $\nabla f(x) \neq 0$ (\Cref{lem:zero_grad}).

    \paragraph{Step 2: Non-zero step.}
    We now show that
    \begin{equation}
        \label{eq:non_zero_step}
        x_{\rm lmo} \neq x,
    \end{equation}
    which will be useful later.

    Assume, for the sake of contradiction, that $x_{\rm lmo} = x$. 
    Since $x_{\rm lmo}$ solves \eqref{eq:lmo_step_combined}, this implies that $x$ minimizes the linear functional
    \begin{equation*}
        z \mapsto \inner{\nabla f(x)}{z}
    \end{equation*}
    over $\cX \cap \cB(x,t)$.
    Hence,
    \begin{equation}
        \label{eq:optimality_x_itself_lemma}
        \inner{\nabla f(x)}{z-x} \ge 0, \qquad \forall z \in \cX \cap \cB(x,t).
    \end{equation}

    Now fix any $x_\star \in \cX_\star$.
    Since $f$ is convex and differentiable (\Cref{ass:main}), we have
    \begin{equation}
        \label{eq:cvx_0978ttfyif}
        f_\star = f(x_\star) \ge f(x)+\inner{\nabla f(x)}{x_\star-x}.
    \end{equation}
    Since $x \notin \cX_\star$, we have $f(x) > f_\star$, and therefore
    \begin{equation}
        \label{eq:strict_inner_positive_lemma}
        \inner{\nabla f(x)}{x-x_\star} \overset{\eqref{eq:cvx_0978ttfyif}}{\geq} f(x) - f_\star > 0.
    \end{equation}

    Because the radius $t$ satisfies \eqref{eq:polyak_radius_bound_combined}, and
    \begin{equation*}
        f(x)-f_\star \overset{\eqref{eq:cvx_0978ttfyif}}{\leq} \inner{\nabla f(x)}{x-x_\star} \le \norm{\nabla f(x)}\,\norm{x-x_\star},
    \end{equation*}
    it follows that $0 < t \le \norm{x-x_\star}$. 
    The scalar $\lambda \eqdef \frac{t}{\norm{x-x_\star}}$ thus belongs to $(0,1]$.
    Define
    \begin{equation}
        \label{eq:0970fyd808fg}
        z_t \eqdef (1-\lambda)x+\lambda x_\star = x-\frac{t}{\norm{x-x_\star}}(x-x_\star).
    \end{equation}
    Since $x,x_\star \in \cX$ and $\cX$ is convex (\Cref{ass:main}), we have $z_t \in \cX$ also. 
    Therefore,
    \begin{equation*}
        \norm{z_t-x} \overset{\eqref{eq:0970fyd808fg}}{=} \frac{t}{\norm{x-x_\star}}\norm{x-x_\star} = t.
    \end{equation*}
    Hence, $z_t \in \cB(x,t)$, which implies that $z_t \in \cX \cap \cB(x,t)$. 
    Applying \eqref{eq:optimality_x_itself_lemma} with $z = z_t$, we obtain
    \begin{equation*}
        \inner{\nabla f(x)}{z_t-x} \ge 0.
    \end{equation*}
    On the other hand,
    \begin{equation*}
        \inner{\nabla f(x)}{z_t-x} \overset{\eqref{eq:0970fyd808fg}}{=} -\frac{t}{\norm{x-x_\star}}\inner{\nabla f(x)}{x-x_\star}.
    \end{equation*}
    By \eqref{eq:strict_inner_positive_lemma}, the right-hand side is strictly negative, and hence
    \begin{equation*}
        \inner{\nabla f(x)}{z_t-x} < 0,
    \end{equation*}
    which contradicts inequality \eqref{eq:optimality_x_itself_lemma}.
    We therefore conclude that $x_{\rm lmo} \neq x$.

    \paragraph{Step 3: An upper bound on the radius.}
    Fix any $x_\star \in \cX_\star$.
    Since $f$ is convex and differentiable (\Cref{ass:main}),
    \begin{equation}
        \label{eq:h8fd+=-9=08fg}
        f_\star = f(x_\star) \ge f(x)+\inner{\nabla f(x)}{x_\star-x}.
    \end{equation}
    Combining this with \eqref{eq:polyak_radius_bound_combined}, we get
    \begin{equation}
        \label{eq:combined_admissibility_corrected}
        0 < t \overset{\eqref{eq:polyak_radius_bound_combined}}{\leq} t(x) \overset{\eqref{def:polyak-radius}}{=} \frac{f(x)-f_\star}{\norm{\nabla f(x)}} \overset{\eqref{eq:h8fd+=-9=08fg}}{\leq} \frac{\inner{\nabla f(x)}{x-x_\star}}{\norm{\nabla f(x)}}.
    \end{equation}

    \paragraph{Step 4: Optimality conditions for the local linear minimization problem.}
    Since $x_{\rm lmo}$ solves the optimization problem
    \begin{equation*}
        \min_{z\in \cX \cap \cB(x,t)} \inner{\nabla f(x)}{z},
    \end{equation*}
    the first-order optimality condition \citep[Theorem~27.4]{rockafellar1997convex} gives $0 \in \nabla f(x)+N_{\cX \cap\cB(x,t)}(x_{\rm lmo})$, where $N_\cC(z)$ denotes the normal cone of the set $\cC$ at point $z$.
    The sum rule $N_{\cX \cap\cB(x,t)} = N_{\cX}+N_{\cB(x,t)}$ applies here as both sets are convex and their relative interiors meet \citep[Corollary~23.8.1]{rockafellar1997convex}.
    Indeed, $\ri \cB(x,t) = \interior \cB(x,t) = \curlybr{z \st \norm{z-x} < t}$ is an open set containing $x$.
    Since $\cX$ is non-empty and convex, $x \in \cX \subseteq \cl \cX = \cl\rbr{\ri \cX}$ \citep[Theorem~6.3]{rockafellar1997convex}, so this open neighborhood of $x$ contains a point of $\ri\cX$, i.e. $\ri\cX \cap \ri\cB(x,t) \neq \emptyset$.

    \Cref{lem:normal_cone_ball} says that
    \begin{equation}
        \label{eq:normal_cone_ball_xx}
        N_{\cB(x,t)}(y) =
        \begin{cases}
        \emptyset, & y \notin \cB(x,t),\\[1ex]
        \curlybr{0}, & \norm{y-x} < t,\\[1ex]
        \curlybr{\lambda' (y-x)\st \lambda' \geq 0}, & \norm{y-x} = t.
        \end{cases}
    \end{equation}
    Since $x_{\rm lmo} \in \cB(x,t)$, we conclude that
    \begin{equation}
        \label{eq:-8-y8f9dufd}
        N_{\cB(x,t)}(x_{\rm lmo}) = \begin{cases} \curlybr{0}, & \norm{x_{\rm lmo}-x} < t,\\[1ex]
        \curlybr{\lambda' (x_{\rm lmo}-x)\st \lambda' \geq 0}, & \norm{x_{\rm lmo}-x} = t.
        \end{cases}
    \end{equation}
    Hence, there exist $n \in N_{\cX}(x_{\rm lmo})$ and $\lambda \ge 0$ such that
    \begin{equation}
        \label{eq:kkt_corrected}
        \nabla f(x) + n + \lambda(x_{\rm lmo}-x) = 0.
    \end{equation}

    \paragraph{Step 5: Moving to the boundary of the ball.}
    Claim \ref{it:bdry-sphere} has been established by \citet[Theorem~3.1(ii)]{richtarik2026local}.
    We include the proof as we re-use one of the intermediate results below.

    We already know that $x_{\rm lmo} \in \cB(x,t)$, and therefore
    \begin{equation}
        \label{eq:norm_upper_corrected}
        \norm{x_{\rm lmo}-x} \le t.
    \end{equation}
    It remains to prove the reverse inequality.
    We distinguish two cases.

    \paragraph{$\bullet$ Case 1: $\lambda>0$.}
    By the construction of \eqref{eq:kkt_corrected}, $\lambda (x_{\rm lmo}-x) \in N_{\cB(x,t)}(x_{\rm lmo})$.
    Since $x_{\rm lmo} \neq x$ and $\lambda > 0$, characterization \eqref{eq:normal_cone_ball_xx} of the normal cone $N_{\cB(x,t)}(x_{\rm lmo})$ implies that
    \begin{equation*}
        \norm{x_{\rm lmo}-x} = t.
    \end{equation*}

    \paragraph{$\bullet$ Case 2: $\lambda=0$.}
    In this case, \eqref{eq:kkt_corrected} becomes $\nabla f(x) + n = 0,$ i.e.,
    \begin{equation*}
        -\nabla f(x) \in N_{\cX}(x_{\rm lmo}).
    \end{equation*}
    By the definition of the normal cone, this implies $\inner{-\nabla f(x)}{z-x_{\rm lmo}} \le 0$ for all $z \in \cX$. 
    Equivalently,
    \begin{equation*}
        \inner{\nabla f(x)}{x_{\rm lmo}-z} \leq 0, \qquad \forall z \in \cX.
    \end{equation*}
    In particular, for any $x_\star \in \cX_\star \subseteq \cX$,
    \begin{equation}
        \label{eq:normalcone_xstar_equiv}
        \inner{\nabla f(x)}{x_{\rm lmo}-x_\star} \le 0.
    \end{equation}

    Using the decomposition
    \begin{equation*}
        x_{\rm lmo}-x_\star = (x-x_\star)+(x_{\rm lmo}-x),
    \end{equation*}
    we obtain
    \begin{equation}
        \label{eq:inner_decomp}
        \inner{\nabla f(x)}{x_{\rm lmo}-x_\star} = \inner{\nabla f(x)}{x-x_\star} + \inner{\nabla f(x)}{x_{\rm lmo}-x}.
    \end{equation}
    By the Cauchy--Schwarz inequality,
    \begin{equation}
        \label{eq:key_three_cs}
        \inner{\nabla f(x)}{x_{\rm lmo}-x} \ge -\norm{\nabla f(x)} \norm{x_{\rm lmo}-x} \overset{\eqref{eq:norm_upper_corrected}}{\ge} -\norm{\nabla f(x)} t.
    \end{equation}
    By combining this with \eqref{eq:inner_decomp}, we get
    \begin{equation*}
        \inner{\nabla f(x)}{x_{\rm lmo}-x_\star} \ge \inner{\nabla f(x)}{x-x_\star}-t\norm{ \nabla f(x)}.
    \end{equation*}
    Using \eqref{eq:combined_admissibility_corrected}, we conclude that
    \begin{equation}
        \label{eq:halfspace_lower_corrected}
        \inner{\nabla f(x)}{x_{\rm lmo}-x_\star} \ge 0.
    \end{equation}
    Combining \eqref{eq:normalcone_xstar_equiv} and \eqref{eq:halfspace_lower_corrected}, we get
    \begin{equation*}
        \inner{\nabla f(x)}{x_{\rm lmo}-x_\star} = 0.
    \end{equation*}
    Hence equality must hold in the preceding chain of inequalities, and in particular in \eqref{eq:key_three_cs}.
    This implies
    \begin{equation*}
        \norm{x_{\rm lmo}-x} = t ,
    \end{equation*}
    which proves \eqref{eq:lmo_step_has_length_t_combined}.

\paragraph{Step 6: Projection characterization.}
    We now prove that
    \begin{equation*}
        x_{\rm lmo} = \Proj_{\cX \cap \cH(\nabla f(x),\alpha_{\rm lmo}(x,t))}(x).
    \end{equation*}
    Let $\cC \subseteq \R^d$ be a non-empty closed convex set and let $x \in \R^d$. A point $u \in \cC$ satisfies
    \begin{equation*}
        u = \Proj_{\cC}(x)
    \end{equation*}
    if and only if
    \begin{equation}
        \label{eq:proj_characterization_corrected}
        \inner{x-u}{y-u} \le 0, \qquad \forall y \in \cC.
    \end{equation}
    Therefore, it suffices to prove that
    \begin{equation}
        \label{eq:goal_projection_corrected}
        \inner{x-x_{\rm lmo}}{y-x_{\rm lmo}} \le 0, \qquad \forall y \in \cX \cap \cH(\nabla f(x),\alpha_{\rm lmo}(x,t)).
    \end{equation}

    We again distinguish the same two cases as above.

    \paragraph{$\bullet$ Case 1: $\lambda>0$.}
    Fix any $y \in \cX \cap \cH(\nabla f(x),\alpha_{\rm lmo}(x,t))$.
    Since $n \in N_{\cX}(x_{\rm lmo})$ and $y \in \cX$,
    \begin{equation*}
        \inner{n}{y-x_{\rm lmo}} \le 0.
    \end{equation*}
    Also, since $y \in \cH(\nabla f(x),\alpha_{\rm lmo}(x,t))$ and $\alpha_{\rm lmo}(x,t) = \inner{\nabla f(x)}{x_{\rm lmo}}$,
    \begin{equation*}
        \inner{\nabla f(x)}{y-x_{\rm lmo}} \le 0.
    \end{equation*}
    Taking the inner product of \eqref{eq:kkt_corrected} with $y-x_{\rm lmo}$, we obtain
    \begin{equation*}
        \inner{\nabla f(x)}{y-x_{\rm lmo}} + \inner{n}{y-x_{\rm lmo}} + \lambda\inner{x_{\rm lmo}-x}{y-x_{\rm lmo}} = 0.
    \end{equation*}
    The first two terms are non-positive, and hence $\lambda\inner{x_{\rm lmo}-x}{y-x_{\rm lmo}} \ge 0$. 
    Since $\lambda > 0$,
    \begin{equation*}
        \inner{x-x_{\rm lmo}}{y-x_{\rm lmo}} \le 0.
    \end{equation*}
    Thus \eqref{eq:goal_projection_corrected} holds in this case.

    \paragraph{$\bullet$ Case 2: $\lambda=0$.}
    As proved above, equality must hold in the Cauchy--Schwarz inequality \eqref{eq:key_three_cs}, and therefore
    \begin{equation*}
        x-x_{\rm lmo} = t\,\frac{\nabla f(x)}{\norm{\nabla f(x)}}.
    \end{equation*}
    Now fix any $y \in \cX \cap \cH(\nabla f(x),\alpha_{\rm lmo}(x,t))$.
    Since $y \in \cH(\nabla f(x),\alpha_{\rm lmo}(x,t))$,
    \begin{equation*}
        \inner{\nabla f(x)}{y-x_{\rm lmo}} \le 0.
    \end{equation*}
    Therefore
    \begin{equation*}
        \inner{x-x_{\rm lmo}}{y-x_{\rm lmo}} = \frac{t}{\norm{\nabla f(x)}}\inner{\nabla f(x)}{y-x_{\rm lmo}} \le 0.
    \end{equation*}
    Thus \eqref{eq:goal_projection_corrected} holds in this case as well.
\end{proof}

We now have all the parts to establish \Cref{thm:lmo_is_projection}, which we restate here for convenience.

\lmoprojthm*

The theorem is phrased in depth terms; 
we argue in levels and translate with \eqref{eq:dictionary} and \eqref{eq:level_depth_table}, under which $\cH_x(\nabla f(x),s_{\rm lmo}(x,t)) = \cH(\nabla f(x),\alpha_{\rm lmo}(x,t))$.

\begin{proof}
    Write $g \eqdef \nabla f(x)$, which is non-zero by \Cref{lem:zero_grad}.
    \begin{enumerate}
        \item 
        \Cref{thm:alpha_star_exact,lem:alpha_t_vs_alpha_cvx,lem:alpha_t_le_alpha_lmo} show that $\alpha_\star(x) \le \alpha_{\rm cvx}(x) \le \alpha_{\rm proj}(x,t) \le \alpha_{\rm lmo}(x,t)$, which by \eqref{eq:level_depth_table} is the chain $s_{\rm lmo}(x,t) \le t \le t(x) \le s_\star(x)$.
        For the remaining strict inequality $s_{\rm lmo}(x,t) > 0$, fix any $x_\star \in \cX_\star$ and set $\mu \eqdef t/\norm{x-x_\star}$, which lies in $(0,1]$ because $0 < t \le t(x) \le \inner{g}{x-x_\star}/\norm{g} \le \norm{x-x_\star}$ by \eqref{eq:combined_admissibility_corrected} and Cauchy--Schwarz.
        The point $z \eqdef (1-\mu)x+\mu x_\star$ lies in $\cX\cap\cB(x,t)$, so $\alpha_{\rm lmo}(x,t) \le \inner{g}{z} = \inner{g}{x}-\mu\inner{g}{x-x_\star}$ and therefore $s_{\rm lmo}(x,t) = \rbr{\inner{g}{x}-\alpha_{\rm lmo}(x,t)}/\norm{g} \ge \mu\inner{g}{x-x_\star}/\norm{g} \ge \mu\rbr{f(x)-f_\star}/\norm{g} > 0$; equivalently, $x \notin \cH_x(\nabla f(x),s_{\rm lmo}(x,t))$.
        The same computation is recorded, in the notation of the local Frank--Wolfe gap, as \Cref{lem:local_fw_gap}.
        Since $\cX_\star \subseteq \cH(\nabla f(x), \alpha_\star(x))$ (\Cref{thm:alpha_star_exact}) and the half-spaces are increasing in the level $\alpha$, we have
        \begin{equation*}
            \cX_\star \subseteq \cH(\nabla f(x), \alpha_\star(x)) \subseteq \cH(\nabla f(x), \alpha_{\rm lmo}(x,t)) .
        \end{equation*}

        \item 
        This is the content of \eqref{eq:lmo_equals_projection_halfspace_combined}, part~\ref{it:bdry-proj} of \Cref{thm:lmo_boundary_and_projection}.

        \item 
        This follows from \eqref{eq:lmo_step_has_length_t_combined}, part~\ref{it:bdry-sphere} of \Cref{thm:lmo_boundary_and_projection}.
        Further, by part \ref{it:bdry-proj} of the same theorem, $x_{\rm lmo} = \Proj_{\cX\cap\cH_{\rm lmo}}(x)$, so $\dist\rbr{x,\cX\cap\cH_{\rm lmo}} = \norm{x-x_{\rm lmo}} = t$.

        \item 
        \Cref{thm:projection_fejer_halfspace} applied with $\alpha = \alpha_{\rm lmo}(x,t) \ge \alpha_\star(x)$, which holds by part~\ref{it:lmoproj-localizes}, combined with the step length in part~\ref{it:lmoproj-stepsize}.

        Since the minimizer is identified with a projection, it is unique: 
        by part~\ref{it:lmoproj-stepsize}, every element of $\argmin_{z\in\cX \cap\cB(x,t)}\inner{g}{z}$ lies on the sphere $\norm{z-x} = t$, and that argmin is convex, so it is a singleton.
    \end{enumerate}
\end{proof}

The following one-step consequences are used repeatedly in \Cref{app:band,app:localrate}.

\begin{lemma}
\label{lem:descent} 
    Let $\{x_k\}$ be generated by \algname{Local LMO} with admissible radii $0 < t_k \le t(x_k)$ from $x_0 \in \cX$.
    Then, for every $k \ge 0$ and every $x_\star \in \cX_\star$,
    \begin{enumerate}
        \item \label{it:descent-1}
        $t_k \le \norm{x_k-x_\star}$,

        \item \label{it:descent-2}
        $\norm{x_{k+1}-x_k} = t_k$,

        \item \label{it:descent-3}
        $\norm{x_{k+1}-x_\star}^2 \le \norm{x_k-x_\star}^2-t_k^2$.
    \end{enumerate}
\end{lemma}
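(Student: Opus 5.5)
The plan is to reduce all three items to a single application of \Cref{thm:lmo_is_projection} at the point $x = x_k$ with radius $t = t_k$. The only thing to check first is that this application is legitimate at every $k$ the run reaches.

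We argue by induction that $x_k \in \cX$ for every $k$. This holds at $k=0$ by assumption. The oracle \eqref{eq:local_lmo} returns a point of $\cX \cap \cB(x_k,t_k)$, so $x_{k+1} \in \cX$ as well. If some $x_k \in \cX_\star$, then $f(x_k) = f_\star$, so the Polyak radius vanishes and no admissible $t_k > 0$ exists; the run has stopped. Hence at every step actually taken we have $x_k \in \cX \setminus \cX_\star$ and $0 < t_k \le t(x_k)$, which are exactly the hypotheses of \Cref{thm:lmo_is_projection}.

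Item~\ref{it:descent-1} then follows by the chain already recorded in \eqref{eq:combined_admissibility_corrected}. Convexity gives $t_k \le t(x_k) \le \inner{g_k}{x_k - x_\star}/\norm{g_k}$. Cauchy--Schwarz bounds the right-hand side by $\norm{x_k - x_\star}$, for every $x_\star \in \cX_\star$.

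The remaining items are direct quotations. Item~\ref{it:descent-2} is part~\ref{it:lmoproj-stepsize} of \Cref{thm:lmo_is_projection}, equivalently \eqref{eq:lmo_step_has_length_t_combined}, applied at $x_k$. Item~\ref{it:descent-3} is part~\ref{it:lmoproj-distance}. For completeness one can rederive it by applying \Cref{lem:generic-decomposition} with $x_+ = x_{k+1}$ and $z = x_\star$. The cross term $\inner{x_k - x_{k+1}}{x_{k+1} - x_\star}$ is nonnegative: by part~\ref{it:lmoproj-projects}, $x_{k+1}$ is the projection of $x_k$ onto $\cX \cap \cH_k$, and by part~\ref{it:lmoproj-localizes} this set contains $\cX_\star$. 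Substituting $\norm{x_k - x_{k+1}} = t_k$ then yields the bound.

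There is no real obstacle. The only point needing care is the bookkeeping about stopping, which guarantees that every iterate is feasible and non-optimal, so that \Cref{thm:lmo_is_projection} applies at each step.
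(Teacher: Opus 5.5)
Your proposal is correct and matches the paper's proof: item~1 follows from first-order convexity and Cauchy--Schwarz via $t_k \le t(x_k) \le \norm{x_k - x_\star}$, and items~2 and~3 are quoted directly from parts~3 and~4 of \Cref{thm:lmo_is_projection}. Your extra bookkeeping (feasibility of the iterates by induction, and admissibility $t(x_k) > 0$ forcing $x_k \notin \cX_\star$) only makes explicit a hypothesis the paper leaves implicit.
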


\begin{proof}
    Let $g_k = \nabla f(x_k)$.
    Convexity and Cauchy--Schwarz give $f(x_k) - f_\star \le \inner{g_k}{x_k-x_\star} \le \norm{g_k}\norm{x_k-x_\star}$, so $t_k \le t(x_k) = (f(x_k) - f_\star) / \norm{g_k} \le \norm{x_k-x_\star}$, which is part~\ref{it:descent-1}.
    Parts \ref{it:descent-2} and \ref{it:descent-3} correspond to parts~\ref{it:lmoproj-stepsize} and~\ref{it:lmoproj-distance} of \Cref{thm:lmo_is_projection}, respectively.
\end{proof}

\section{Proof of \Cref{thm:universal_band}: a universal rate on the deep-cut band}
\label{app:band}

\paragraph{Proof outline.}
\Cref{thm:universal_band} states that every projection method whose cut lies in the deep-cut band attains the rate $K^{-(1+\vartheta)/2}$, without knowledge of $\vartheta$ or $L_\vartheta$.
Our argument is based on two observations.
\begin{enumerate}
    \item
    \Cref{lem:polyak_cut} shows that a point $x_+$ in the Polyak half-space $\cH(\nabla f(x),\alpha_{\rm cvx}(x))$ satisfies $f(x_+)-f_\star \le D_f(x_+,x)$.
    Cutting at or below the Polyak level converts the Bregman divergence into a function gap.

    \item
    Fejér monotonicity (\Cref{thm:projection_fejer_halfspace}) makes the squared step lengths summable, so the shortest of $K$ steps is at most $\dist(x_0,\cX_\star)/\sqrt{K}$.
\end{enumerate}

Hölder continuity turns the first into $f(x_{k+1})-f_\star \le \frac{L_\vartheta}{1+\vartheta}\norm{x_{k+1}-x_k}^{1+\vartheta}$, and the second then gives the rate.
We stress that neither step uses $\vartheta$ or $L_\vartheta$, which is why the bound is universal.

\Cref{thm:alpha_star_exact,thm:projection_fejer_halfspace} of \Cref{app:identification} are re-used below.

\subsection{The Polyak cut turns a Bregman divergence into a function gap}
\label{sub:polyak_cut}

\begin{lemma}
\label{lem:polyak_cut}
    Fix $x \in \cX$ and let $x_+ \in \cH(\nabla f(x), \alpha_{\rm cvx}(x))$.
    Then
    \begin{equation}
        \label{eq:special_Polyak_ineq}
        f(x_+) - f_\star \le D_f(x_+,x) .
    \end{equation}
\end{lemma}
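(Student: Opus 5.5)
The plan is to unfold the definitions and check that the gap between $f(x_+)-f_\star$ and $D_f(x_+,x)$ has a sign fixed by the half-space membership. Write $g \eqdef \nabla f(x)$. By definition of the Bregman divergence,
\begin{equation*}
    D_f(x_+,x) \ = \ f(x_+) - f(x) - \inner{g}{x_+-x} ,
\end{equation*}
so the claimed inequality $f(x_+)-f_\star \le D_f(x_+,x)$ is equivalent to
\begin{equation*}
    -f_\star \ \le \ -f(x) - \inner{g}{x_+} + \inner{g}{x} ,
\end{equation*}
that is, to $\inner{g}{x_+} \le \inner{g}{x} - \rbr{f(x)-f_\star}$. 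The right-hand side is exactly $\alpha_{\rm cvx}(x)$ from \eqref{eq:alpha_cvx_def}.

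The membership $x_+ \in \cH(\nabla f(x),\alpha_{\rm cvx}(x))$ means, by \eqref{eq:halfspace_level}, precisely that $\inner{g}{x_+} \le \alpha_{\rm cvx}(x)$. So the lemma follows from one line of rearrangement. I will present it as a forward chain:
\begin{equation*}
    f(x_+) - f_\star \ = \ D_f(x_+,x) + f(x) - f_\star + \inner{g}{x_+ - x} \ = \ D_f(x_+,x) + \rbr{\inner{g}{x_+} - \alpha_{\rm cvx}(x)} \ \le \ D_f(x_+,x) .
\end{equation*}

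There is no real obstacle. The only point to note is that neither convexity beyond the definition of $f_\star$ nor $x\in\cX$ is actually needed; the hypothesis $x\in\cX$ only guarantees that $f(x)-f_\star\ge 0$, so that $\alpha_{\rm cvx}(x)$ corresponds to a nonnegative depth in \eqref{eq:dictionary}. In particular, the case $\nabla f(x)=0$ requires no separate treatment, since the identity above holds verbatim.
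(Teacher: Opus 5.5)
Your proof is correct and is essentially the paper's own argument: both expand $D_f(x_+,x)$, recognize $\inner{\nabla f(x)}{x} - \rbr{f(x)-f_\star}$ as $\alpha_{\rm cvx}(x)$, and use the half-space membership $\inner{\nabla f(x)}{x_+} \le \alpha_{\rm cvx}(x)$ to conclude. Your side remarks are also accurate: the identity uses no convexity, and the argument needs no separate treatment of the case $\nabla f(x)=0$.
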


\begin{proof}
    By the definition \eqref{eq:alpha_cvx_def} of $\alpha_{\rm cvx}$ and the assumption $x_+ \in \cH(\nabla f(x), \alpha_{\rm cvx}(x))$,
    \begin{equation*}
        \inner{\nabla f(x)}{x_+} \le \alpha_{\rm cvx}(x) = \inner{\nabla f(x)}{x} - f(x) + f_\star .
    \end{equation*}
    Hence, the Bregman divergence $D_f(x_+,x)$ can be bounded below as
    \begin{align*}
        D_f(x_+,x)
        & = f(x_+) - f(x) - \inner{\nabla f(x)}{x_+-x}\\
        & = f(x_+) - f_\star + \rbr{f_\star - f(x) + \inner{\nabla f(x)}{x} - \inner{\nabla f(x)}{x_+}} \\
        & \ge f(x_+) - f_\star .
    \end{align*}
\end{proof}

\subsection{A universal rate for any method that cuts at the Polyak level}
\label{sub:universal_engine}

\begin{lemma}
\label{lem:universal_engine}
    Suppose $\nabla f$ is Hölder continuous with exponent $\vartheta \in [0,1]$ and constant $L_\vartheta$, in the sense that
    \begin{equation}
        \label{eq:band_holder}
        D_f(u,v) \le \frac{L_\vartheta}{1+\vartheta}\norm{u-v}^{1+\vartheta}, \qquad \forall u,v \in \R^d .
    \end{equation}
    Fix $x_0 \in \cX$ and let $\{x_k\}_{k \ge 0} \subset \cX$ be generated by any method with the following two properties:
    \begin{enumerate}
        \item
        \label{item:band_cut}
        $x_{k+1} \in \cH(\nabla f(x_k), \alpha_{\rm cvx}(x_k))$ for all $k \ge 0$;

        \item
        \label{item:band_fejer}
        $\norm{x_{k+1} - x_\star}^2 \le \norm{x_k - x_\star}^2 - \norm{x_{k+1} - x_k}^2$ for all $k \ge 0$ and all $x_\star \in \cX_\star$.
    \end{enumerate}
    Then, for every $K \ge 1$,
    \begin{equation}
        \label{eq:uy08fydo87fdo8f}
        \min_{0 \le k \le K-1} \rbr{f(x_{k+1})-f_\star} \le \frac{L_\vartheta}{1+\vartheta} \rbr{\frac{\dist^2(x_0,\cX_\star)}{K}}^{\frac{1+\vartheta}{2}} .
    \end{equation}
\end{lemma}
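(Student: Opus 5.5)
The plan combines the two hypotheses exactly as in the sketch of \Cref{thm:universal_band}. First, I will convert each step's function gap into a power of its length. Fix $k \ge 0$. Property~\ref{item:band_cut} places $x_{k+1}$ in the Polyak half-space $\cH(\nabla f(x_k),\alpha_{\rm cvx}(x_k))$. Since $x_k \in \cX$ (the iterates lie in $\cX$ by hypothesis), \Cref{lem:polyak_cut} applies with $x = x_k$ and $x_+ = x_{k+1}$ and gives $f(x_{k+1}) - f_\star \le D_f(x_{k+1},x_k)$. The Hölder bound \eqref{eq:band_holder} with $u = x_{k+1}$, $v = x_k$ then yields
\begin{equation*}
    f(x_{k+1}) - f_\star \ \le \ \frac{L_\vartheta}{1+\vartheta}\norm{x_{k+1}-x_k}^{1+\vartheta}, \qquad \forall k \ge 0 .
\end{equation*}

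Second, I will make the steps summable. Property~\ref{item:band_fejer} is precisely inequality \eqref{eq:sq-distance-shrink}. Its telescoping argument, as in the proof of \Cref{thm:projection_fejer_halfspace}, uses nothing about how the iterates are produced. Summing over $k = 0,\dots,K-1$ with $x_\star$ chosen as the projection of $x_0$ onto the closed convex non-empty set $\cX_\star$ therefore gives
\begin{equation*}
    \min_{0\le k\le K-1}\norm{x_{k+1}-x_k}^2 \ \le \ \frac{1}{K}\sum_{k=0}^{K-1}\norm{x_{k+1}-x_k}^2 \ \le \ \frac{\dist^2(x_0,\cX_\star)}{K}.
\end{equation*}
I will re-derive this bound in one line rather than cite \Cref{thm:projection_fejer_halfspace}, since that theorem is stated for a specific projection iteration.

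Finally, let $j$ be an index attaining the minimum step length. Because $r \mapsto r^{(1+\vartheta)/2}$ is nondecreasing on $[0,\infty)$ for $\vartheta \in [0,1]$, we get
\begin{equation*}
    \min_{k} \bigl(f(x_{k+1})-f_\star\bigr) \ \le \ f(x_{j+1}) - f_\star \ \le \ \frac{L_\vartheta}{1+\vartheta}\bigl(\norm{x_{j+1}-x_j}^2\bigr)^{\frac{1+\vartheta}{2}} \ \le \ \frac{L_\vartheta}{1+\vartheta}\Bigl(\frac{\dist^2(x_0,\cX_\star)}{K}\Bigr)^{\frac{1+\vartheta}{2}},
\end{equation*}
which is \eqref{eq:uy08fydo87fdo8f}.

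No step poses a real obstacle. The only points needing care are the ones above: \Cref{lem:polyak_cut} requires $x_k \in \cX$, and the bound must be applied at the index of the shortest step rather than to an average, so that the monotonicity of the power map transfers the bound on the squared length to the gap. The $\vartheta$-dependence enters only through that final monotone transformation, which is what makes the rate universal.
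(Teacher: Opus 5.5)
Your proposal is correct and follows essentially the same route as the paper: it combines \Cref{lem:polyak_cut} with the Hölder bound to get $f(x_{k+1})-f_\star \le \frac{L_\vartheta}{1+\vartheta}\norm{x_{k+1}-x_k}^{1+\vartheta}$, telescopes property~\ref{item:band_fejer} to bound the shortest squared step by $\dist^2(x_0,\cX_\star)/K$, and evaluates at the index of that shortest step. Re-deriving the telescoping directly from property~\ref{item:band_fejer}, rather than citing \Cref{thm:projection_fejer_halfspace}, matches the paper's own ``telescopes as in the proof of'' remark.
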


\begin{proof}
    By property \ref{item:band_cut}, \Cref{lem:polyak_cut} applies at $x = x_k$ with $x_+ = x_{k+1}$.
    Combining \eqref{eq:special_Polyak_ineq} with the Hölder bound \eqref{eq:band_holder}, used with $u = x_{k+1}$ and $v = x_k$, gives
    \begin{equation}
        \label{eq:0980d9gfg_89gyfg}
        f(x_{k+1}) - f_\star \le D_f(x_{k+1},x_k) \le \frac{L_\vartheta}{1+\vartheta} \norm{x_{k+1}-x_k}^{1+\vartheta} .
    \end{equation}
    Property \ref{item:band_fejer} telescopes as in the proof of \Cref{thm:projection_fejer_halfspace} and yields
    \begin{equation}
        \label{eq:1/K_ineq}
        \min_{0 \le k \le K-1} \norm{x_{k} - x_{k+1}}^2 \le \frac{\dist^2(x_0,\cX_\star)}{K}, \qquad \forall K \ge 1 .
    \end{equation}
    Let $k^\star \in \argmin_{0 \le k \le K-1}\norm{x_{k+1}-x_k}$. 
    Applying \eqref{eq:0980d9gfg_89gyfg} at $k = k^\star$ and then \eqref{eq:1/K_ineq},
    \begin{equation*}
        \begin{aligned}
            \min_{0 \le k \le K-1}\rbr{f(x_{k+1})-f_\star} \ &\le \ f(x_{k^\star+1})-f_\star \ \le \ \frac{L_\vartheta}{1+\vartheta}\norm{x_{k^\star+1}-x_{k^\star}}^{1+\vartheta}\\
            &\le \ \frac{L_\vartheta}{1+\vartheta}\rbr{\frac{\dist^2(x_0,\cX_\star)}{K}}^{\frac{1+\vartheta}{2}} ,
        \end{aligned}
    \end{equation*}
    which is \eqref{eq:uy08fydo87fdo8f}.
\end{proof}

\subsection{Proof of \Cref{thm:universal_band}}
\label{sub:proof_band}

\universalbandthm*

\begin{proof}
    Fix $K \ge 1$. 
    If $x_k \in \cX_\star$ for some $0 \le k \le K$, then $f(x_k) = f_\star$ and \eqref{eq:band_rate} holds trivially, so we may assume $x_k \in \cX\setminus\cX_\star$ for every $0 \le k \le K$. 

    We write $\alpha_k \eqdef \inner{g_k}{x_k}-s_k\norm{g_k}$ for the level of the $k$-th cut, so that $\cH_{x_k}(g_k,s_k) = \cH(g_k,\alpha_k)$ by \eqref{eq:dictionary} and the hypothesis $t(x_k) \le s_k \le s_\star(x_k)$ reads $\alpha_\star(x_k) \le \alpha_k \le \alpha_{\rm cvx}(x_k)$ by \eqref{eq:level_depth_table}.
    We verify the two hypotheses of \Cref{lem:universal_engine}.
    The lower bound $\alpha_k \ge \alpha_\star(x_k)$ gives $\cX_\star \subseteq \cH(g_k,\alpha_k)$ by \Cref{thm:alpha_star_exact}, so \Cref{thm:projection_fejer_halfspace} applies and the iterates satisfy property \ref{item:band_fejer}.
    The upper bound $\alpha_k \le \alpha_{\rm cvx}(x_k)$ gives $\cH(g_k,\alpha_k) \subseteq \cH(g_k,\alpha_{\rm cvx}(x_k))$, and since $x_{k+1} \in \cX \cap \cH(g_k,\alpha_k)$ by construction, property \ref{item:band_cut} holds as well.
    \Cref{lem:universal_engine} then gives \eqref{eq:uy08fydo87fdo8f}.
    Its minimum is taken over the $K$ iterates $x_1,\dots,x_K$ produced by the method, so it is sharper than \eqref{eq:band_rate}, whose minimum ranges over $x_0,\dots,x_K$ as well; 
    we state the weaker form in the main text for consistency among \eqref{eq:band_rate}, \eqref{eq:localrate} and \eqref{eq:rate_zero}.
\end{proof}

\section{Proof of \Cref{thm:localrate}: a universal rate for \algname{Local LMO}}
\label{app:localrate}

\paragraph{Proof outline.}
\algname{Local LMO} cuts above the Polyak level $\alpha_{\rm cvx}(x)$, so the analysis of \Cref{app:band} does not apply.
For convenience, we define the suboptimality gap of iterate $x_k$ as
\begin{equation}
    a_k \eqdef f(x_k) - f_\star .
\end{equation}

We further define the projection of the initial iterate onto the set of minimizers
\begin{equation}
    \label{eq:xstar0}
    x_\star^0 \eqdef \Proj_{\cX_\star}(x_0) ,
\end{equation}
and write $r_k \eqdef \norm{x_k-x_\star^0}$, so that $r_0 = R \eqdef \dist(x_0,\cX_\star)$.
We also write $G_\star \eqdef \norm{\nabla f(x_\star)}$, which by \Cref{lem:grad_const_on_Xstar} does not depend on the choice of $x_\star \in \cX_\star$.

Our proof proceeds in four steps.
\begin{enumerate}
    \item
    \label{item:ingr_holder}
    In \Cref{sub:holder}, we establish the conjugate Hölder inequality to give the gradient bound $\norm{g_k-\nabla f(x_\star)} \le \kappa a_k^{\vartheta/(1+\vartheta)}$ of \Cref{lem:grad_bound}---whence, by the triangle inequality, the two-sided bound $\bigl|\norm{g_k}-G_\star\bigr| \le \kappa a_k^{\vartheta/(1+\vartheta)}$ used in \eqref{eq:twosided}---and, with Fejér monotonicity, a priori bounds that require no compactness.

    \item
    \Cref{sub:gap} tackles the local Frank--Wolfe gap and the overshoot.
    With $\ell_k \eqdef \inner{g_k}{x_k-x_{k+1}}$ and $\rho_k \eqdef \ell_k/a_k \in (0,1]$, the descent inequality reads $a_{k+1} \le a_k(1-\rho_k+\delta_k)$ with $\delta_k = c\,t_k^{1+\vartheta}/a_k$, and the overshoot $\Gamma_k = (1-\rho_k) a_k$ provides a floor on the next function value.

    \item
    In \Cref{sub:budget}, we use the projection identity of \Cref{app:identification} to tighten $r_{k+1}^2 \le r_k^2-t_k^2$ to $r_{k+1}^2 \le r_k^2-t_k^2(2-\rho_k)/\rho_k$, whence $\sum_{k<K}t_k^2/\rho_k \le R^2$.
    Here, $\rho_k \in (0,1]$ is the fraction of the available first-order decrease captured by the linear minimization oracle; 
    at the Polyak radius $t_k = t(x_k)$ it is the cosine of the angle between the gradient $\nabla f(x_k)$ and the step $x_k - x_{k+1}$.
    The function value can fall at most a $\rho_k$ fraction (\Cref{lem:overshoot_lower_bound}), while the squared distance falls by $t_k^2 / \rho_k$.

    \item
    \Cref{sub:proof6} assembles the proof of \Cref{thm:localrate}.
    Let $A = \min_{k\le K} a_k$.
    If $A$ is above the level $\bar a$ at which the Hölder part of the gradient dominates $G_\star$, the Fejér budget alone gives the exponent $\tfrac{1+\vartheta}2$.
    Below $\bar a$ the two-sided bound of step \ref{item:ingr_holder} pins $\norm{g_k}$ to within a constant factor of $G_\star$, and the $K$ steps are counted three ways \citep{cartis2011adaptive,cartis2012evaluation}: 
    those above $\bar a$ (few, by the budget); 
    those that stall, $\rho_k < 2\delta_k$ (few, because each spends $a_kt_k^{1-\vartheta}/2c$ of the budget); 
    and the productive rest, counted by Cauchy--Schwarz against the potential $\Phi_k = a_k^{-2}$, using the identity \eqref{eq:gaincost}.
\end{enumerate}

\subsection{Two consequences of Hölder continuity}
\label{sub:holder}

Throughout this section $\nabla f$ is Hölder continuous with exponent $\vartheta \in [0,1]$ and constant $L_\vartheta$ as in \eqref{eq:holder}, and we write
\begin{equation}
    \label{eq:kappa_h}
    c \eqdef \frac{L_\vartheta}{1+\vartheta},\quad \kappa_\vartheta \eqdef \rbr{\frac{1+\vartheta}{\vartheta}}^{\frac{\vartheta}{1+\vartheta}},\quad \kappa \eqdef \kappa_\vartheta L_\vartheta^{\frac1{1+\vartheta}},\quad h(a) \eqdef \frac{a}{G_\star+\kappa\,a^{\vartheta/(1+\vartheta)}}\quad(a \ge 0),
\end{equation}
with $h(0) \eqdef 0$ when $G_\star = 0$.
At the endpoint $\vartheta = 0$ we read $\kappa_\vartheta$ as its limit, $\kappa_0 \eqdef \lim_{\vartheta\downarrow0}\kappa_\vartheta = 1$.
With this convention $\kappa = L_0$ and $h(a) = a/(G_\star+L_0)$ at $\vartheta = 0$, and all constants below are finite at both endpoints.
\Cref{cor:apriori} shows that $h$ is increasing and that $t(x_k) \ge h(a_k)$ along any run.

The first lemma of this subsection is the Hölder analogue of the standard smooth inequality $D_f(u,v) \ge \frac1{2L}\norm{\nabla f(u)-\nabla f(v)}^2$. 

\begin{lemma}
\label{lem:conj_holder}
    Let $f:\R^d \to \R$ be convex and differentiable with $D_f(u,v) \le \frac{L_\vartheta}{1+\vartheta}\norm{u-v}^{1+\vartheta}$ for all $u,v$, where $\vartheta \in (0,1]$.
    Then
    \begin{equation}
        \label{eq:conj_holder}
        D_f(u,v)\  \ge \ \frac{\vartheta}{1+\vartheta}\,L_\vartheta^{-1/\vartheta}\, \norm{\nabla f(u)-\nabla f(v)}^{\frac{1+\vartheta}{\vartheta}}\qquad\forall u,v \in \R^d .
    \end{equation}
\end{lemma}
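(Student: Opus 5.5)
The plan is to follow the standard proof of the smooth inequality $D_f(u,v) \ge \frac{1}{2L}\norm{\nabla f(u)-\nabla f(v)}^2$, using the Hölder upper bound in place of the quadratic one. Fix $u,v$ and introduce the auxiliary convex function $\phi(z) \eqdef f(z) - \inner{\nabla f(v)}{z}$. It has gradient $\nabla\phi(z) = \nabla f(z)-\nabla f(v)$, so $\nabla\phi(v)=0$. By convexity $v$ is therefore a global minimizer of $\phi$ on $\R^d$. Moreover $D_\phi = D_f$, so $\phi$ inherits the Hölder upper bound \eqref{eq:holder}. Finally, $D_f(u,v) = \phi(u)-\phi(v)$, which is exactly the quantity to bound from below.

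Next, apply the Hölder upper bound to $\phi$ at the base point $u$. For every $z$,
\begin{equation*}
    \phi(v) \le \phi(z) \le \phi(u) + \inner{\nabla\phi(u)}{z-u} + \tfrac{L_\vartheta}{1+\vartheta}\norm{z-u}^{1+\vartheta} .
\end{equation*}
Take $z = u - r\,\frac{w}{\norm{w}}$ with $w \eqdef \nabla\phi(u) = \nabla f(u)-\nabla f(v)$; if $w=0$ the claim is trivial since $D_f\ge0$. Rearranging gives
\begin{equation*}
    \phi(u)-\phi(v) \ \ge \ \sup_{r\ge 0}\Bigl( r\norm{w} - \tfrac{L_\vartheta}{1+\vartheta} r^{1+\vartheta}\Bigr).
\end{equation*}

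It remains to evaluate this one-dimensional maximization, which is a Fenchel conjugate of a power function. The stationarity condition $\norm{w} = L_\vartheta r^\vartheta$ gives $r = (\norm{w}/L_\vartheta)^{1/\vartheta}$; this requires $\vartheta>0$, which is why the statement excludes $\vartheta=0$. Substituting back yields
\begin{equation*}
    r\norm{w}\rbr{1-\tfrac{1}{1+\vartheta}} = \tfrac{\vartheta}{1+\vartheta}\,L_\vartheta^{-1/\vartheta}\norm{w}^{(1+\vartheta)/\vartheta},
\end{equation*}
which is exactly \eqref{eq:conj_holder}.

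There is no serious obstacle. The only points needing care are two checks. First, the Hölder bound for $\phi$ must be applied with base point $u$ rather than $v$, so that the linear term carries $\nabla\phi(u)$. Second, the exponent bookkeeping $r^{1+\vartheta}=(\norm{w}/L_\vartheta)^{(1+\vartheta)/\vartheta}$ must be verified so that the constant comes out as stated.
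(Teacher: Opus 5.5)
Your proposal is correct and follows essentially the same route as the paper. The paper tilts $f$ by the linearization at $v$ (its $\varphi = D_f(\cdot,v)$ differs from your $\phi$ only by a constant), applies the Hölder bound at base point $u$ along $-\nabla\varphi(u)/\norm{\nabla\varphi(u)}$, uses $\varphi \ge 0$ (your $\phi(v) \le \phi(z)$), and picks the same step $s = (\Delta/L_\vartheta)^{1/\vartheta}$.
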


\begin{proof}
    Fix $v$ and set $\varphi(w) \eqdef D_f(w,v) = f(w)-f(v)-\inner{\nabla f(v)}{w-v}$.
    Then $\varphi$ is convex and differentiable with $\nabla\varphi(w) = \nabla f(w)-\nabla f(v)$ and $\varphi \ge 0$ by convexity of $f$.
    Since $\varphi = f-\ell$ with $\ell(w) \eqdef f(v) + \inner{\nabla f(v)}{w-v}$ affine, and $D_\ell \equiv 0$ for affine $\ell$ because $\nabla\ell$ is constant, we have $D_\varphi = D_f-D_\ell = D_f$.
    Therefore, $\varphi$ obeys the same Hölder bound, with the same $L_\vartheta$.
    
    Fix $u$ and put $\Delta \eqdef \norm{\nabla\varphi(u)}$.
    If $\Delta = 0$, there is nothing to prove.
    
    Otherwise, let, for $s > 0$, $w \eqdef u - s \nabla\varphi(u) / \Delta$.
    Then
    \begin{equation*}
        0 \le \varphi(w) = \varphi(u) + \inner{\nabla\varphi(u)}{w-u}+D_\varphi(w,u) \le \varphi(u) - s \Delta + \frac{L_\vartheta}{1+\vartheta} s^{1+\vartheta}.
    \end{equation*}
    Choosing $s = (\Delta/L_\vartheta)^{1/\vartheta}$ makes $\frac{L_\vartheta}{1+\vartheta}s^{1+\vartheta} = \frac{L_\vartheta s^\vartheta}{1+\vartheta}\,s = \frac{s\Delta}{1+\vartheta}$, so $0 \le \varphi(u)-\frac{\vartheta}{1+\vartheta}s\Delta$, which is \eqref{eq:conj_holder}.
\end{proof}

The conjugate Hölder inequality is vacuous at $\vartheta = 0$. 
The following lemma supplies a Hölder-$0$ bound on the Bregman divergence which forces the gradient map to have bounded range.

\begin{lemma}
\label{lem:grad_bound_zero}
    Let $f$ satisfy $D_f(u,v) \le L_0\norm{u-v}$ for all $u,v \in \R^d$.
    Then
    \begin{equation}
        \label{eq:grad_bound_zero}
        \norm{\nabla f(u)-\nabla f(v)}\  \le \ L_0, \qquad \forall u,v \in \R^d .
    \end{equation}
\end{lemma}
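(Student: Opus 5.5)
I would mimic the proof of \Cref{lem:conj_holder}, replacing the optimized step length by a step that is sent to infinity. Fix $v$ and set $\varphi(w) \eqdef D_f(w,v)$. Exactly as in that lemma, $\varphi$ is convex, differentiable and non-negative, satisfies $\nabla\varphi(w) = \nabla f(w)-\nabla f(v)$, and inherits the bound $D_\varphi = D_f \le L_0\norm{\cdot}$ with the same constant.

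Now fix $u$ and put $\Delta \eqdef \norm{\nabla\varphi(u)} = \norm{\nabla f(u)-\nabla f(v)}$. If $\Delta = 0$ there is nothing to prove. Otherwise, for $s>0$ take $w \eqdef u - s\nabla\varphi(u)/\Delta$. Nonnegativity of $\varphi$ and the Hölder-$0$ bound then give
\begin{equation*}
    0 \le \varphi(w) = \varphi(u) + \inner{\nabla\varphi(u)}{w-u} + D_\varphi(w,u) \le \varphi(u) - s\Delta + L_0 s .
\end{equation*}
Hence $s(\Delta - L_0) \le \varphi(u)$ for every $s>0$. Since $\varphi(u)$ is a finite number independent of $s$, letting $s \to \infty$ forces $\Delta - L_0 \le 0$, which is \eqref{eq:grad_bound_zero}.

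I expect no real obstacle here. The only point to check is that the Bregman bound transfers from $f$ to $\varphi$, and this is exactly the affine-invariance remark already used in \Cref{lem:conj_holder}. One can also skip $\varphi$ entirely and write $0 \le D_f(w,v) = D_f(u,v) + \inner{\nabla f(u)-\nabla f(v)}{w-u} + D_f(w,u)$, a three-point identity checked by expansion, which leads to the same conclusion.
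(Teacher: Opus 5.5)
Your proposal is correct and follows essentially the same route as the paper's proof. The paper also sets $\varphi = D_f(\cdot,v)$, steps a distance $s$ along $-\nabla\varphi(u)$, obtains $s(\Delta - L_0) \le \varphi(u)$ for all $s>0$, and concludes $\Delta \le L_0$ because the left-hand side would otherwise be unbounded. Note that nonnegativity of $\varphi$ uses convexity of $f$, which is supplied by the standing assumption.
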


\begin{proof}
    Fix $v$ and set $\varphi(w) \eqdef D_f(w,v)$, so that $\varphi$ is convex and differentiable with $\nabla\varphi(w) = \nabla f(w)-\nabla f(v)$, $\varphi \ge 0$, and $D_\varphi = D_f$, exactly as in the proof of \Cref{lem:conj_holder}.
    Fix $u$ and put $\Delta \eqdef \norm{\nabla\varphi(u)}$; if $\Delta = 0$ there is nothing to prove.
    For $s > 0$ let $w_s \eqdef u-s\nabla\varphi(u)/\Delta$, so that $\norm{w_s-u} = s$ and
    \begin{equation*}
        0\  \le \ \varphi(w_s)\  = \ \varphi(u)+\inner{\nabla\varphi(u)}{w_s-u}+D_\varphi(w_s,u)\  \le \ \varphi(u)-s\Delta+L_0\,s .
    \end{equation*}
    Hence $s\rbr{\Delta-L_0} \le \varphi(u)$ for every $s > 0$.
    Were $\Delta > L_0$, the left-hand side would be unbounded above, a contradiction; so $\Delta \le L_0$.
\end{proof}

\begin{lemma}
\label{lem:grad_bound}
    Let $\nabla f$ be Hölder continuous with exponent $\vartheta \in [0,1]$ and constant $L_\vartheta$.
    Then, for every $x \in \cX$ and every $x_\star \in \cX_\star$,
    \begin{equation}
        \label{eq:grad_bound}
        \norm{\nabla f(x)-\nabla f(x_\star)}\  \le \ \kappa_\vartheta\,L_\vartheta^{\frac{1}{1+\vartheta}}\, \rbr{f(x)-f_\star}^{\frac{\vartheta}{1+\vartheta}} .
    \end{equation}
\end{lemma}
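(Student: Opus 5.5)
We split on the exponent. For $\vartheta \in (0,1]$ we apply \Cref{lem:conj_holder} with $u = x$ and $v = x_\star$, and the task is to bound $D_f(x,x_\star)$ by the gap $f(x)-f_\star$. By definition,
\begin{equation*}
    D_f(x,x_\star) = f(x)-f_\star-\inner{\nabla f(x_\star)}{x-x_\star} .
\end{equation*}
Since $x \in \cX$ and $x_\star$ minimizes $f$ over $\cX$, the first-order optimality condition gives $\inner{\nabla f(x_\star)}{x-x_\star} \ge 0$. Hence $D_f(x,x_\star) \le f(x)-f_\star$, and this is the only place where feasibility of $x$ enters.

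Combining this with \eqref{eq:conj_holder} yields
\begin{equation*}
    \frac{\vartheta}{1+\vartheta}\,L_\vartheta^{-1/\vartheta}\,\norm{\nabla f(x)-\nabla f(x_\star)}^{\frac{1+\vartheta}{\vartheta}} \le f(x)-f_\star .
\end{equation*}
Raising both sides to the power $\vartheta/(1+\vartheta)$ gives
\begin{equation*}
    \norm{\nabla f(x)-\nabla f(x_\star)} \le \rbr{\tfrac{1+\vartheta}{\vartheta}}^{\frac{\vartheta}{1+\vartheta}} L_\vartheta^{\frac{1}{\vartheta}\cdot\frac{\vartheta}{1+\vartheta}}\rbr{f(x)-f_\star}^{\frac{\vartheta}{1+\vartheta}} .
\end{equation*}
The constant here is exactly $\kappa_\vartheta L_\vartheta^{1/(1+\vartheta)}$, which is \eqref{eq:grad_bound}.

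At the endpoint $\vartheta = 0$ we use the convention $\kappa_0 = 1$, and the right-hand side of \eqref{eq:grad_bound} reads $L_0\,(f(x)-f_\star)^0 = L_0$. This is exactly \Cref{lem:grad_bound_zero} applied with $u = x$ and $v = x_\star$, which holds for all pairs of points with no feasibility needed. Two edge cases deserve a remark. If $f(x) = f_\star$, then $x \in \cX_\star$ and \Cref{lem:grad_const_on_Xstar} gives $\nabla f(x) = \nabla f(x_\star)$, so the bound holds with either reading of $0^0$. For $\vartheta > 0$ the case $f(x) = f_\star$ is already covered by the inequality above.

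None of these steps looks like a real obstacle. The only subtle points are the sign $\inner{\nabla f(x_\star)}{x-x_\star} \ge 0$, which relies on $x \in \cX$, and being careful with the $\vartheta = 0$ convention.
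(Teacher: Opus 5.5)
Your proposal is correct and follows essentially the same route as the paper: first-order optimality of $x_\star$ over $\cX$ gives $D_f(x,x_\star) \le f(x)-f_\star$, which you combine with \Cref{lem:conj_holder} and then raise to the power $\vartheta/(1+\vartheta)$. Like the paper, you reduce the endpoint $\vartheta = 0$ to \Cref{lem:grad_bound_zero} under the convention $\kappa_0 = 1$; your extra remark on the $0^0$ reading when $f(x) = f_\star$ is a harmless refinement.
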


\begin{proof}
    First-order optimality of $x_\star$ over $\cX$ gives $\inner{\nabla f(x_\star)}{x-x_\star} \ge 0$ for all $x \in \cX$, whence $D_f(x,x_\star) = f(x)-f_\star-\inner{\nabla f(x_\star)}{x-x_\star} \le f(x)-f_\star$.
    Combining with \Cref{lem:conj_holder} applied at $(u,v) = (x,x_\star)$, we find
    \begin{equation*}
        \frac{\vartheta}{1+\vartheta} L_\vartheta^{-1/\vartheta} \norm{\nabla f(x)-\nabla f(x_\star)}^{(1+\vartheta)/\vartheta} \le f(x) - f_\star .
    \end{equation*}
    Raising to the power $\vartheta/(1+\vartheta)$ and rearranging gives \eqref{eq:grad_bound}.
    For $\vartheta = 0$, \eqref{eq:grad_bound} reads $\norm{\nabla f(x)-\nabla f(x_\star)} \le \kappa_0L_0 = L_0$, which is \Cref{lem:grad_bound_zero}.
\end{proof}

\begin{corollary}
\label{cor:apriori}
    Along any \algname{Local LMO} run with $0 < t_k \le t(x_k)$ starting from $x_0 \in \cX$, we have
    \begin{align}
    \label{eq:apriori}
        a_k & \le a_{\max} \eqdef \max\curlybr{2G_\star R,\ (2\kappa_\vartheta)^{1+\vartheta}L_\vartheta R^{1+\vartheta}} \\
        \norm{g_k} & \le G \eqdef G_\star + \kappa_\vartheta L_\vartheta^{\frac1{1+\vartheta}} a_{\max}^{\frac{\vartheta}{1+\vartheta}} < +\infty ,
    \end{align}
    and consequently
    \begin{equation}
        \label{eq:polyak_radius_lb}
        t(x_k) = \frac{a_k}{\norm{g_k}} \ge \frac{a_k}{G_\star+\kappa\,a_k^{\vartheta/(1+\vartheta)}} = h(a_k),
    \end{equation}
    with $h$ as in \eqref{eq:kappa_h}, and $h$ is strictly increasing on $[0,+\infty)$.
\end{corollary}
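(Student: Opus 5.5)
The plan is to prove the three claims of \Cref{cor:apriori} in order: the gap bound $a_k \le a_{\max}$, then the gradient bound, then the Polyak-radius lower bound together with monotonicity of $h$. The only tools needed are Fejér monotonicity from \Cref{lem:descent} and the gradient bound of \Cref{lem:grad_bound}.

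\emph{Distance control.} Iterating part~\ref{it:descent-3} of \Cref{lem:descent} at $x_\star = x_\star^0$ gives $\norm{x_k - x_\star^0} \le R$ for every $k$ (if the run stops, the bounds hold trivially at the terminal point, which lies in $\cX_\star$).

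\emph{Self-referential bound on $a_k$.} Convexity gives $a_k \le \inner{g_k}{x_k - x_\star^0} \le \norm{g_k}\,R$. Applying the triangle inequality to \Cref{lem:grad_bound} gives $\norm{g_k} \le G_\star + \kappa\, a_k^{\vartheta/(1+\vartheta)}$. Hence
\[
a_k \le G_\star R + \kappa R\, a_k^{\vartheta/(1+\vartheta)} \le 2\max\curlybr{G_\star R,\ \kappa R\, a_k^{\vartheta/(1+\vartheta)}} .
\]
This is the main (mild) obstacle, since $a_k$ appears on both sides. I split into two cases.
\begin{itemize}
\item If the first term dominates, then $a_k \le 2G_\star R$.
\item Otherwise $a_k \le 2\kappa R\, a_k^{\vartheta/(1+\vartheta)}$. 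For $a_k > 0$, divide to get $a_k^{1/(1+\vartheta)} \le 2\kappa R$, i.e.\ $a_k \le (2\kappa R)^{1+\vartheta} = (2\kappa_\vartheta)^{1+\vartheta} L_\vartheta R^{1+\vartheta}$, using $\kappa^{1+\vartheta} = \kappa_\vartheta^{1+\vartheta} L_\vartheta$.
\end{itemize}
At $\vartheta = 0$ the same computation works with $\kappa_0 = 1$, since \Cref{lem:grad_bound} covers that endpoint. Either case gives $a_k \le a_{\max}$.

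\emph{Gradient bound and Polyak radius.} Substituting $a_k \le a_{\max}$ into $\norm{g_k} \le G_\star + \kappa\, a_k^{\vartheta/(1+\vartheta)}$ gives $\norm{g_k} \le G$, using that $a \mapsto a^{\vartheta/(1+\vartheta)}$ is nondecreasing. Dividing $a_k$ by the same bound $\norm{g_k} \le G_\star + \kappa a_k^{\vartheta/(1+\vartheta)}$ gives $t(x_k) = a_k/\norm{g_k} \ge h(a_k)$; the quotient is well defined because $g_k \ne 0$ by \Cref{lem:zero_grad}.

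\emph{Monotonicity of $h$.} Write $h(a) = 1/\bigl(G_\star/a + \kappa\, a^{-1/(1+\vartheta)}\bigr)$ for $a > 0$. The denominator is strictly decreasing in $a$ whenever $G_\star > 0$ or $\kappa > 0$, so $h$ is strictly increasing on $(0,\infty)$. Since $h > 0$ there and $h(0) = 0$, $h$ is strictly increasing on $[0,\infty)$.

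If $G_\star = \kappa = 0$, then $\nabla f$ is constant and equal to $\nabla f(x_\star) = 0$, so every $x \in \cX$ is optimal. This degenerate case is excluded, because the run starts at $x_0 \in \cX \setminus \cX_\star$.
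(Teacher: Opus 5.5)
Your proposal is correct and follows the paper's proof essentially step for step: Fejér monotonicity and convexity give $a_k \le \norm{g_k} R$, \Cref{lem:grad_bound} turns this into $a_k \le G_\star R + \kappa R\, a_k^{\vartheta/(1+\vartheta)}$, the same two-case split yields $a_{\max}$, and the bounds on $\norm{g_k}$ and $t(x_k)$ then follow directly. The only differences are cosmetic: you prove that $h$ is increasing by writing it as the reciprocal of a strictly decreasing function rather than by differentiating, and you explicitly dispose of the degenerate case $G_\star = \kappa = 0$ (where $h$ is undefined), which the paper's proof leaves implicit.
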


\begin{proof}
    Convexity and the Cauchy--Schwarz inequality give
    \begin{equation*}
        a_k \le \inner{g_k}{x_k-x_\star^0} \le \norm{g_k}r_k \le \norm{g_k} R ,
    \end{equation*}
    where the last step follows from Fejér monotonicity (part~\ref{it:descent-3} of \Cref{lem:descent}).
    With \eqref{eq:grad_bound}, 
    \begin{equation}
        a_k \le G_\star R + \kappa_\vartheta L_\vartheta^{1/(1+\vartheta)}R \, a_k^{\vartheta/(1+\vartheta)} .
    \end{equation}
    
    Now, if $a \le u+va^{p}$ with $p = \frac{\vartheta}{1+\vartheta} \in [0,\tfrac{1}{2}]$ and $u,v \ge 0$, then either $a \le 2u$ or $a \le 2va^{p}$, i.e. $a \le (2v)^{1/(1-p)} = (2v)^{1+\vartheta}$.
    With $u = G_\star R$ and $v = \kappa R$ this is the first bound in \eqref{eq:apriori}, and the second follows from \eqref{eq:grad_bound} again.

    Equation \eqref{eq:polyak_radius_lb} is just \eqref{eq:grad_bound} together with $\norm{g_k} \le G_\star+\norm{g_k-\nabla f(x_\star)}$ for any $x_\star \in \cX_\star$.

    Finally $h(a) = a/(G_\star+\kappa a^{p})$ with $p = \tfrac{\vartheta}{1+\vartheta} \in [0,\tfrac12]$ has $h'(a) = \rbr{G_\star+(1-p)\kappa a^{p}}/(G_\star+\kappa a^{p})^2 > 0$ for $a > 0$;
    if $G_\star = 0$ this reads $h(a) = a^{1/(1+\vartheta)}/\kappa$, which is continuous at $0$ with our convention that $h(0) = 0$, so $h$ is increasing on all of $[0,\infty)$ in either case.
\end{proof}

\subsection{The local Frank--Wolfe gap and the overshoot}
\label{sub:gap}

\begin{definition}
\label{def:local_fw_gap}
    For $x \in \cX\setminus\cX_\star$ and $t > 0$, define
    \begin{equation*}
        \ell(x,t) \eqdef \inner{\nabla f(x)}{x}-\min_{z\in\cX\cap\cB(x,t)}\inner{\nabla f(x)}{z} = \inner{\nabla f(x)}{x-x_{\rm lmo}}\  \ge 0 .
    \end{equation*}
\end{definition}
\begin{lemma}
\label{lem:local_fw_gap}
    Fix $x \in \cX\setminus\cX_\star$ and $0 < t \le t(x)$.
    Then for every $x_\star \in \cX_\star$,
    \begin{equation}
        \label{eq:local_fw_gap}
        \ell(x,t)\  \ge \ \frac{t}{\norm{x-x_\star}}\,\inner{\nabla f(x)}{x-x_\star}\  \ge \ \frac{t}{\norm{x-x_\star}}\,\rbr{f(x)-f_\star}.
    \end{equation}
\end{lemma}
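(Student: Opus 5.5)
The first inequality in \eqref{eq:local_fw_gap} comes from exhibiting one feasible point for the local linear minimization and comparing. The second inequality is first-order convexity. This is the computation already carried out in Step~2 of the proof of \Cref{thm:lmo_boundary_and_projection} and in part~\ref{it:lmoproj-localizes} of the proof of \Cref{thm:lmo_is_projection}; here we record it in the notation of \Cref{def:local_fw_gap}.

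Fix $x_\star \in \cX_\star$ and write $g \eqdef \nabla f(x)$.

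\emph{Admissibility of the comparison point.} Convexity gives $f(x)-f_\star \le \inner{g}{x-x_\star}$. Cauchy--Schwarz then gives $\inner{g}{x-x_\star} \le \norm{g}\norm{x-x_\star}$. Dividing by $\norm{g} \neq 0$ (\Cref{lem:zero_grad}) yields $0 < t \le t(x) \le \norm{x-x_\star}$. In particular $x \neq x_\star$, and $\mu \eqdef t/\norm{x-x_\star}$ lies in $(0,1]$. Set $z \eqdef (1-\mu)x + \mu x_\star$. This point lies in $\cX$ because $\cX$ is convex and contains both $x$ and $x_\star$. It lies in $\cB(x,t)$ because $\norm{z-x} = \mu\norm{x-x_\star} = t$.

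\emph{The first inequality.} Since $z$ is feasible for the minimization defining $\ell(x,t)$, we have $\min_{z'\in\cX\cap\cB(x,t)}\inner{g}{z'} \le \inner{g}{z}$. Therefore
\begin{equation*}
    \ell(x,t) \ \ge \ \inner{g}{x-z} \ = \ \mu\inner{g}{x-x_\star} \ = \ \frac{t}{\norm{x-x_\star}}\inner{g}{x-x_\star} .
\end{equation*}

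\emph{The second inequality.} Multiply the convexity bound $\inner{g}{x-x_\star} \ge f(x)-f_\star$ by the positive factor $t/\norm{x-x_\star}$.

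The only step needing care is checking that $\mu \le 1$, so that $z$ lies on the segment $[x,x_\star]$ and hence in $\cX$. This is exactly where the hypothesis $t \le t(x)$ enters. Nothing else is delicate.
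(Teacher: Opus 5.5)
Your proposal is correct and follows essentially the same route as the paper: you use the feasible comparison point $(1-\mu)x+\mu x_\star$ with $\mu = t/\norm{x-x_\star} \in (0,1]$, justified by $t \le t(x) \le \norm{x-x_\star}$, to bound the local minimum and hence $\ell(x,t)$. The second inequality is then first-order convexity, exactly as in the paper's proof.
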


\begin{proof}
    Let $g = \nabla f(x)$.
    As in steps~2 and~3 of the proof of \Cref{thm:lmo_boundary_and_projection}, $0 < t \le t(x) \le \inner{g}{x-x_\star}/\norm{g} \le \norm{x-x_\star}$, so $\lambda \eqdef t/\norm{x-x_\star} \in (0,1]$ and $z_t \eqdef x-\lambda(x-x_\star) = (1-\lambda)x+\lambda x_\star \in \cX$ with $\norm{z_t-x} = t$; hence $z_t \in \cX\cap\cB(x,t)$.
    Therefore
    \begin{equation*}
        \min_{z\in\cX\cap\cB(x,t)}\inner{g}{z} \le \inner{g}{z_t} = \inner{g}{x}-\lambda\inner{g}{x-x_\star},
    \end{equation*}
    so $\ell(x,t) \ge \lambda\inner{g}{x-x_\star}$.
    The last inequality in \eqref{eq:local_fw_gap} is first-order convexity, $\inner{g}{x-x_\star} \ge f(x)-f_\star$.
\end{proof}
\begin{lemma}
\label{lem:overshoot}
    Fix $x_k \in \cX\setminus\cX_\star$ and $0 < t_k \le t(x_k)$, and let $x_{k+1}$ be a \algname{Local LMO} step.
    With $\ell_k \eqdef \ell(x_k,t_k)$,
    \begin{equation}
        \label{eq:overshoot}
        \inner{g_k}{x_{k+1}} = \alpha_{\rm lmo}(x_k,t_k) = \alpha_{\rm cvx}(x_k)+\Gamma_k, \qquad \Gamma_k \eqdef a_k-\ell_k = \alpha_{\rm lmo}(x_k,t_k)-\alpha_{\rm cvx}(x_k),
    \end{equation}
    and $\Gamma_k$ satisfies
    \begin{equation}
        \label{eq:overshoot_bounds}
        0\  \le \ \alpha_{\rm proj}(x_k,t_k)-\alpha_{\rm cvx}(x_k)\  \le \ \Gamma_k\  \le \ a_k\rbr{1-\tfrac{t_k}{\norm{x_k-x_\star}}} , \qquad \forall x_\star \in \cX_\star.
    \end{equation}
    Consequently $x_{k+1} \in \cH(g_k,\alpha_{\rm proj}(x_k,t_k))$ if and only if $\alpha_{\rm lmo}(x_k,t_k) = \alpha_{\rm proj}(x_k,t_k)$, i.e. iff the unconstrained ball minimizer $x_k-t_k\,g_k/\norm{g_k}$ is feasible in $\cX$.
    Whenever the constraint $\cX$ is active at the step, $x_{k+1} \notin \cH(g_k,\alpha_{\rm proj}(x_k,t_k))$.
\end{lemma}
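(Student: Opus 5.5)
The identity \eqref{eq:overshoot} is a direct rewriting of the definitions, so I will dispatch it first. By definition $\ell_k = \inner{g_k}{x_k} - \alpha_{\rm lmo}(x_k,t_k)$ and $\alpha_{\rm lmo}(x_k,t_k) = \inner{g_k}{x_{k+1}}$. Also $\alpha_{\rm cvx}(x_k) = \inner{g_k}{x_k} - a_k$ by \eqref{eq:alpha_cvx_def}. Subtracting these gives
\[
\alpha_{\rm lmo}(x_k,t_k) - \alpha_{\rm cvx}(x_k) = a_k - \ell_k = \Gamma_k ,
\]
which is all of \eqref{eq:overshoot}.

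For the chain \eqref{eq:overshoot_bounds} I will take the three inequalities in turn.
\begin{enumerate}
    \item The first inequality is \Cref{lem:alpha_t_vs_alpha_cvx}, part~1, since $t_k \le t(x_k)$.
    \item The middle inequality is \Cref{lem:alpha_t_le_alpha_lmo}, after subtracting $\alpha_{\rm cvx}(x_k)$ from both sides.
    \item For the upper bound, \Cref{lem:local_fw_gap} gives $\ell_k \ge \frac{t_k}{\norm{x_k-x_\star}}\,a_k$, so $\Gamma_k = a_k - \ell_k \le a_k\bigl(1 - t_k/\norm{x_k-x_\star}\bigr)$.
\end{enumerate}
Here $x_k \neq x_\star$ because $x_k \notin \cX_\star$, so the quotient is well defined.

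For the equivalence, recall that $x_{k+1} \in \cH(g_k,\alpha_{\rm proj})$ means $\alpha_{\rm lmo} = \inner{g_k}{x_{k+1}} \le \alpha_{\rm proj}$. Combined with $\alpha_{\rm proj} \le \alpha_{\rm lmo}$ from \Cref{lem:alpha_t_le_alpha_lmo}, this is equivalent to equality $\alpha_{\rm lmo} = \alpha_{\rm proj}$. It remains to show that this equality holds if and only if $\bar z \eqdef x_k - t_k g_k/\norm{g_k} \in \cX$.
\begin{itemize}
    \item[($\Leftarrow$)] If $\bar z \in \cX$, then $\bar z$ is feasible in $\cX \cap \cB(x_k,t_k)$ and $\inner{g_k}{\bar z} = \alpha_{\rm proj}$, so $\alpha_{\rm lmo} \le \alpha_{\rm proj}$; together with the reverse inequality this gives equality.
    \item[($\Rightarrow$)] If $\alpha_{\rm lmo} = \alpha_{\rm proj}$, then $x_{k+1}$ attains equality in the Cauchy--Schwarz chain of Step~2 of \Cref{lem:alpha_t_le_alpha_lmo}. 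That forces $x_{k+1} - x_k$ to be antiparallel to $g_k$ with length $t_k$, i.e.\ $x_{k+1} = \bar z$, so $\bar z \in \cX$.
\end{itemize}
The Cauchy--Schwarz equality case in ($\Rightarrow$) is the only step requiring care. It uses $g_k \neq 0$ (\Cref{lem:zero_grad}) and the fact that both inequalities in the chain must be tight; I would spell this out explicitly.

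The final sentence is the contrapositive of the equivalence. I would interpret ``the constraint $\cX$ is active'' as meaning that the unconstrained ball minimizer $\bar z$ is infeasible, that is, $\bar z \notin \cX$. The equivalence then gives $x_{k+1} \notin \cH(g_k,\alpha_{\rm proj}(x_k,t_k))$.
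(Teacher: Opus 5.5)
Your proposal is correct and follows essentially the same route as the paper: the identity comes straight from the definitions, and the three bounds come from \Cref{lem:alpha_t_vs_alpha_cvx}, \Cref{lem:alpha_t_le_alpha_lmo} and \Cref{lem:local_fw_gap}. The equivalence then follows from $\alpha_{\rm proj}\le\alpha_{\rm lmo}$, and your explicit Cauchy--Schwarz equality argument for the ($\Rightarrow$) direction spells out the uniqueness of the ball minimizer $x_k - t_k g_k/\norm{g_k}$, which the paper uses only implicitly; your reading of ``active'' as infeasibility of that point is the natural one, under which the last sentence is exactly the contrapositive.
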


\begin{proof}
    By \Cref{thm:lmo_boundary_and_projection}, $\inner{g_k}{x_{k+1}} = \alpha_{\rm lmo}(x_k,t_k)$.
    By definition $\alpha_{\rm cvx}(x_k) = \inner{g_k}{x_k}-a_k$, so $\alpha_{\rm lmo}(x_k,t_k)-\alpha_{\rm cvx}(x_k) = \inner{g_k}{x_{k+1}}-\inner{g_k}{x_k}+a_k = a_k-\ell_k = \Gamma_k$, which is \eqref{eq:overshoot}.
    For \eqref{eq:overshoot_bounds}: the middle inequality $\Gamma_k \ge \alpha_{\rm proj}-\alpha_{\rm cvx} \ge 0$ is \Cref{lem:alpha_t_le_alpha_lmo} together with \Cref{lem:alpha_t_vs_alpha_cvx}; the Cauchy--Schwarz bound $\ell_k = \inner{g_k}{x_k-x_{k+1}} \le \norm{g_k}\,\norm{x_k-x_{k+1}} = t_k\norm{g_k} \le t(x_k)\norm{g_k} = a_k$ gives $\Gamma_k \ge 0$; and the upper bound is \Cref{lem:local_fw_gap}.
    The final claim is \eqref{eq:overshoot}: $x_{k+1} \in \cH(g_k,\alpha_{\rm proj}) \Leftrightarrow \alpha_{\rm lmo} \le \alpha_{\rm proj} \Leftrightarrow \alpha_{\rm lmo} = \alpha_{\rm proj}$, using $\alpha_{\rm lmo} \ge \alpha_{\rm proj}$ (\Cref{lem:alpha_t_le_alpha_lmo}).
    Equality holds iff the minimum of $\inner{g_k}{\cdot}$ over $\cB(x_k,t_k)$, attained at $x_k-t_k g_k/\norm{g_k}$, is attained inside $\cX$.
\end{proof}
\begin{lemma}
\label{lem:overshoot_lower_bound}
    Under the assumptions of \Cref{lem:overshoot}, every \algname{Local LMO} step satisfies the exact identity $f(x_{k+1})-f_\star = D_f(x_{k+1},x_k)+\Gamma_k$, and hence, since $D_f \ge 0$,
    \begin{equation}
        \label{eq:overshoot_lower_bound}
        f(x_{k+1})-f_\star\  \ge \ \Gamma_k\  = \ a_k-\ell_k\  \ge \ 0.
    \end{equation}
    No \algname{Local LMO} step can push the suboptimality below the overshoot $\Gamma_k$, however smooth $f$ is.
    By \eqref{eq:overshoot_bounds}, $\Gamma_k \ge \alpha_{\rm proj}(x_k,t_k)-\alpha_{\rm cvx}(x_k) = \norm{g_k}\rbr{t(x_k)-t_k}$, so the floor vanishes only if $t_k = t(x_k)$ \emph{and} $\alpha_{\rm lmo} = \alpha_{\rm proj}$, i.e. only if the radius is the full Polyak radius and the unconstrained ball minimizer is feasible --- in which case the step coincides with the Polyak projection step.
    A shorter radius leaves a positive floor even when $\cX$ is inactive.
\end{lemma}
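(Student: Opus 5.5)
The identity is a direct expansion of the Bregman divergence combined with \Cref{lem:overshoot}, and everything else is bookkeeping. I will write $g_k = \nabla f(x_k)$ and expand
\begin{equation*}
    D_f(x_{k+1},x_k) = f(x_{k+1}) - f(x_k) - \inner{g_k}{x_{k+1}-x_k} = f(x_{k+1}) - f(x_k) + \ell_k ,
\end{equation*}
where $\ell_k = \inner{g_k}{x_k-x_{k+1}}$ as in \Cref{def:local_fw_gap}. Using $f(x_k) = f_\star + a_k$, this rearranges to
\begin{equation*}
    f(x_{k+1}) - f_\star = D_f(x_{k+1},x_k) + (a_k - \ell_k) = D_f(x_{k+1},x_k) + \Gamma_k ,
\end{equation*}
with $\Gamma_k$ as defined in \eqref{eq:overshoot}. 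Convexity of $f$ gives $D_f \ge 0$, which yields $f(x_{k+1}) - f_\star \ge \Gamma_k$. The remaining inequality $\Gamma_k \ge 0$ is the lower chain of \eqref{eq:overshoot_bounds}. Since the identity never uses Hölder continuity, the floor holds however smooth $f$ is.

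For the floor statement, I quote \eqref{eq:overshoot_bounds} to get $\Gamma_k \ge \alpha_{\rm proj}(x_k,t_k) - \alpha_{\rm cvx}(x_k)$. The computation in \Cref{lem:alpha_t_vs_alpha_cvx} shows this difference equals $\norm{g_k}\rbr{t(x_k)-t_k}$.

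The vanishing condition follows from the decomposition
\begin{equation*}
    \Gamma_k = \bigl(\alpha_{\rm lmo} - \alpha_{\rm proj}\bigr) + \bigl(\alpha_{\rm proj} - \alpha_{\rm cvx}\bigr).
\end{equation*}
Both summands are nonnegative by \Cref{lem:alpha_t_le_alpha_lmo} and \Cref{lem:alpha_t_vs_alpha_cvx}, so $\Gamma_k = 0$ forces each to vanish. Because $g_k \neq 0$ (\Cref{lem:zero_grad}), the second summand vanishes exactly when $t_k = t(x_k)$. By \Cref{lem:overshoot}, the first vanishes exactly when the ball minimizer $x_k - t_k g_k/\norm{g_k}$ lies in $\cX$.

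In that case, uniqueness in \Cref{thm:lmo_is_projection} identifies the step as $x_k - t(x_k)\,g_k/\norm{g_k}$. This point is the projection of $x_k$ onto the Polyak half-space, and it is feasible, so it is also the projection onto $\cX$ intersected with the Polyak cut; this is the Polyak projection step. Conversely, if $t_k < t(x_k)$, the positive term $\norm{g_k}\rbr{t(x_k)-t_k}$ remains even when $\cX$ is inactive. The only point needing any care is this last identification, which rests on uniqueness of the minimizer together with feasibility of the ball minimizer.
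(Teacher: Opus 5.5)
Your proof is correct and follows the paper's argument. Expand $D_f(x_{k+1},x_k)$ using $\ell_k$, add and subtract $f_\star$ to obtain the identity, then conclude with $D_f \ge 0$ and $\Gamma_k \ge 0$ from \eqref{eq:overshoot_bounds}. Your extra verification of the vanishing criterion, via the split $\Gamma_k = (\alpha_{\rm lmo}-\alpha_{\rm proj}) + (\alpha_{\rm proj}-\alpha_{\rm cvx})$ and the identification of the step with the Polyak projection step, is also sound; it makes explicit what the paper leaves to its inline references to \Cref{lem:overshoot} and \Cref{lem:alpha_t_vs_alpha_cvx}.
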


\begin{proof}
    By the definition of the Bregman divergence and of $\ell_k$,
    \begin{equation*}
        D_f(x_{k+1},x_k) = f(x_{k+1})-f(x_k)-\inner{g_k}{x_{k+1}-x_k} = f(x_{k+1})-f(x_k)+\ell_k .
    \end{equation*}
    Adding and subtracting $f_\star$ and using $\Gamma_k = a_k-\ell_k$ \eqref{eq:overshoot} gives $f(x_{k+1})-f_\star = D_f(x_{k+1},x_k)+a_k-\ell_k = D_f(x_{k+1},x_k)+\Gamma_k$.
    Since $D_f(x_{k+1},x_k) \ge 0$ by convexity and $\Gamma_k \ge 0$ by \eqref{eq:overshoot_bounds}, we obtain \eqref{eq:overshoot_lower_bound}.
\end{proof}
\begin{lemma}
\label{lem:master_recursion}
    Suppose $\nabla f$ is Hölder continuous with exponent $\vartheta \in [0,1]$ (that is, $D_f(u,v) \le \frac{L_\vartheta}{1+\vartheta}\norm{u-v}^{1+\vartheta}$ for all $u,v$), and let $\{x_k\}$ be generated by \algname{Local LMO} with $0 < t_k \le t(x_k)$.
    With $R = \dist(x_0,\cX_\star)$, for every $k \ge 0$,
    \begin{equation}
        \label{eq:master_recursion}
        a_{k+1}\  \le \ a_k-\ell_k+\frac{L_\vartheta}{1+\vartheta}\,t_k^{\,1+\vartheta} \  \le \ \rbr{1-\tfrac{t_k}{R}}a_k+\frac{L_\vartheta}{1+\vartheta}\,t_k^{\,1+\vartheta}.
    \end{equation}
\end{lemma}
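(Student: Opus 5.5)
The plan is to derive the first inequality in \eqref{eq:master_recursion} from the exact identity of \Cref{lem:overshoot_lower_bound} and the Hölder bound. The second inequality then follows from the lower bound on the local Frank--Wolfe gap in \Cref{lem:local_fw_gap}, combined with Fejér monotonicity. If some $x_k \in \cX_\star$, then $a_k = 0$ and the run stops, so we may assume $x_k \in \cX\setminus\cX_\star$ throughout. This makes \Cref{thm:lmo_is_projection} and the lemmas of \Cref{sub:gap} applicable at step $k$.

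\textbf{First inequality.} \Cref{lem:overshoot_lower_bound} gives the identity
\[
a_{k+1} = D_f(x_{k+1},x_k) + \Gamma_k, \qquad \Gamma_k = a_k - \ell_k .
\]
Equivalently, this is the definition of the Bregman divergence with $\ell_k = \inner{g_k}{x_k - x_{k+1}}$. We bound the divergence by the Hölder condition \eqref{eq:holder}:
\[
D_f(x_{k+1},x_k) \le \tfrac{L_\vartheta}{1+\vartheta}\norm{x_{k+1}-x_k}^{1+\vartheta}.
\]
Part~\ref{it:lmoproj-stepsize} of \Cref{thm:lmo_is_projection} (equivalently part~\ref{it:descent-2} of \Cref{lem:descent}) gives $\norm{x_{k+1}-x_k} = t_k$. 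Substituting yields $a_{k+1} \le a_k - \ell_k + \tfrac{L_\vartheta}{1+\vartheta} t_k^{1+\vartheta}$.

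\textbf{Second inequality.} It suffices to show $\ell_k \ge \tfrac{t_k}{R} a_k$. Apply \Cref{lem:local_fw_gap} with the specific minimizer $x_\star^0 = \Proj_{\cX_\star}(x_0)$:
\[
\ell_k \ge \frac{t_k}{\norm{x_k - x_\star^0}}\, a_k .
\]
Next, Fejér monotonicity (part~\ref{it:descent-3} of \Cref{lem:descent}, iterated from $0$ to $k$) gives
\[
\norm{x_k - x_\star^0} \le \norm{x_0 - x_\star^0} = R .
\]
Since $a_k \ge 0$ and $t_k > 0$, replacing the denominator by the larger quantity $R$ only decreases the lower bound. Hence $\ell_k \ge t_k a_k / R$, and the second inequality follows.

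\textbf{Remaining point.} The argument requires $R > 0$. This holds because $x_0 \notin \cX_\star$ and $\cX_\star$ is closed, so $\dist(x_0,\cX_\star) > 0$.

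The only real subtlety is choosing a \emph{single} fixed minimizer for the Fejér comparison. The bound must hold uniformly in $k$ with the constant $R$, so one cannot use a different nearest minimizer at each step. Using $x_\star^0$ resolves this, since Fejér monotonicity holds for every $x_\star \in \cX_\star$ and in particular for $x_\star^0$. Everything else is direct substitution of earlier results.
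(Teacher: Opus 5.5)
Your proposal is correct and follows the paper's proof essentially verbatim. The first inequality comes from the Bregman identity, the Hölder bound, and the step length $\norm{x_{k+1}-x_k}=t_k$; the second comes from \Cref{lem:local_fw_gap} applied at the fixed minimizer $x_\star^0=\Proj_{\cX_\star}(x_0)$, combined with Fejér monotonicity $\norm{x_k-x_\star^0}\le R$.
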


\begin{proof}
    The Hölder descent inequality with $u = x_{k+1}$, $v = x_k$ gives $a_{k+1} = a_k-\ell_k+D_f(x_{k+1},x_k) \le a_k-\ell_k+\frac{L_\vartheta}{1+\vartheta}\norm{x_{k+1}-x_k}^{1+\vartheta}$, and $\norm{x_{k+1}-x_k} = t_k$ by \Cref{thm:lmo_boundary_and_projection}; this is the first inequality.
    For the second, take $x_\star = \Proj_{\cX_\star}(x_0)$, so that part~\ref{it:descent-3} of \Cref{lem:descent} gives $\norm{x_k-x_\star} \le \norm{x_0-x_\star} = R$, so \Cref{lem:local_fw_gap} yields $\ell_k \ge (t_k/\norm{x_k-x_\star})a_k \ge (t_k/R)a_k$.
\end{proof}

\subsection{The sharpened Fejér budget}
\label{sub:budget}

Part~\ref{it:descent-3} of \Cref{lem:descent} established the inequality $r_{k+1}^2 \le r_k^2-t_k^2$.
Our next theorem shows that this is lossy when the overshoot $\Gamma_k$ takes up a large share of the gap $a_k$, and quantifies the gain using the projection identity of \Cref{thm:lmo_boundary_and_projection}.

\begin{theorem}
\label{thm:sharp_fejer}
    Let $x_k \in \cX \setminus \cX_\star$, let $0 < t_k \le t(x_k)$, and let $x_{k+1}$ be a \algname{Local LMO} step.
    Put $\ell_k = \inner{g_k}{x_k-x_{k+1}}\ ( > 0)$ and, for each $x_\star \in \cX_\star$,
    \begin{equation*}
        \varepsilon_k(x_\star) \eqdef \inner{g_k}{x_{k+1}-x_\star} \ge \Gamma_k = a_k-\ell_k \ge 0 .
    \end{equation*}
    Then, for every $x_\star \in \cX_\star$,
    \begin{equation}
        \label{eq:sharp_fejer}
        \norm{x_{k+1}-x_\star}^2 \le \norm{x_k-x_\star}^2 - t_k^2\rbr{1+\frac{2\varepsilon_k(x_\star)}{\ell_k}} \le \norm{x_k-x_\star}^2 - t_k^2\rbr{1+\frac{2\Gamma_k}{\ell_k}}.
    \end{equation}
\end{theorem}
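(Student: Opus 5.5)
The plan is to start from the exact three-point identity of \Cref{lem:generic-decomposition} with $x = x_k$, $x_+ = x_{k+1}$ and $z = x_\star$, together with the step length $\norm{x_k-x_{k+1}} = t_k$ (part~\ref{it:bdry-sphere} of \Cref{thm:lmo_boundary_and_projection}). This gives
\begin{equation*}
    \norm{x_{k+1}-x_\star}^2 = \norm{x_k-x_\star}^2 - 2\inner{x_k-x_{k+1}}{x_{k+1}-x_\star} - t_k^2 .
\end{equation*}
The first inequality in \eqref{eq:sharp_fejer} therefore reduces to the cross-term bound
\begin{equation*}
    \inner{x_k-x_{k+1}}{x_{k+1}-x_\star} \ \ge \ \frac{t_k^2}{\ell_k}\,\varepsilon_k(x_\star) .
\end{equation*}
Plain projection optimality only gives $\ge 0$ here. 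The improvement must come from the KKT representation \eqref{eq:kkt_corrected}, $g_k + n + \lambda(x_{k+1}-x_k) = 0$ with $n \in N_\cX(x_{k+1})$ and $\lambda \ge 0$. I will split into the same two cases as in \Cref{thm:lmo_boundary_and_projection}.

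\emph{Case $\lambda > 0$.} Solving the KKT relation gives $x_k - x_{k+1} = (g_k+n)/\lambda$. Hence
\begin{equation*}
    \inner{x_k-x_{k+1}}{x_{k+1}-x_\star} = \tfrac1\lambda\bigl(\varepsilon_k(x_\star) + \inner{n}{x_{k+1}-x_\star}\bigr) \ \ge\ \varepsilon_k(x_\star)/\lambda .
\end{equation*}
The last step holds because $x_\star \in \cX$ and $n$ is normal to $\cX$ at $x_{k+1}$. To control $1/\lambda$, I will take the inner product of the KKT relation with $x_k - x_{k+1}$. This yields $\ell_k + \inner{n}{x_k-x_{k+1}} = \lambda t_k^2$, and since $x_k \in \cX$ the normal-cone term is $\le 0$, so $\lambda t_k^2 \le \ell_k$, i.e. $1/\lambda \ge t_k^2/\ell_k$. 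Provided $\varepsilon_k(x_\star) \ge 0$, this gives the desired cross-term bound.

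\emph{Case $\lambda = 0$.} Here Step~5 of \Cref{thm:lmo_boundary_and_projection} gives $x_k - x_{k+1} = t_k g_k/\norm{g_k}$, so $\ell_k = t_k\norm{g_k}$. The cross term then equals $(t_k/\norm{g_k})\,\varepsilon_k(x_\star) = (t_k^2/\ell_k)\,\varepsilon_k(x_\star)$ exactly.

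It remains to verify the auxiliary facts in the statement. First, $\ell_k > 0$ follows from \Cref{lem:local_fw_gap}. Second, for the bound $\varepsilon_k(x_\star) \ge \Gamma_k$: write $\varepsilon_k(x_\star) = \inner{g_k}{x_{k+1}} - \inner{g_k}{x_\star} = \alpha_{\rm lmo}(x_k,t_k) - \inner{g_k}{x_\star}$. By \Cref{thm:alpha_star_exact}, $\inner{g_k}{x_\star} \le \alpha_\star(x_k) \le \alpha_{\rm cvx}(x_k)$, so $\varepsilon_k(x_\star) \ge \alpha_{\rm lmo}(x_k,t_k) - \alpha_{\rm cvx}(x_k) = \Gamma_k$ by \eqref{eq:overshoot}. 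Finally, $\Gamma_k \ge 0$ by \eqref{eq:overshoot_bounds}, which also supplies the nonnegativity of $\varepsilon_k(x_\star)$ used in Case $\lambda>0$. The second inequality in \eqref{eq:sharp_fejer} is then immediate from $\varepsilon_k(x_\star) \ge \Gamma_k$ and $\ell_k > 0$.

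The main obstacle is the bound on the multiplier, $\lambda \le \ell_k/t_k^2$. It is the only place where the choice of test point matters, and the key is to test the KKT relation against $x_k$ itself, which is feasible for $\cX$. The rest is bookkeeping with results already established.
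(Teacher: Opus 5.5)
Your proof is correct, but it reaches the cross-term bound by a different route from the paper's.

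The paper does not return to the multipliers. It uses only the projection identity $x_{k+1} = \Proj_{\cX\cap\cH(g_k,\alpha)}(x_k)$ with $\alpha = \alpha_{\rm lmo}(x_k,t_k)$, together with $\norm{x_k-x_{k+1}} = t_k$. It applies the projection inequality at the single test point $z = (1-s)x_k + s x_\star$, where $s = \ell_k/(\ell_k+\varepsilon_k(x_\star))$. This $z$ is exactly where the segment $[x_k,x_\star]$ meets the hyperplane $\curlybr{y \st \inner{g_k}{y} = \alpha}$. The result is $\inner{x_k-x_{k+1}}{x_k-x_\star} \ge t_k^2/s = t_k^2\rbr{1+\varepsilon_k(x_\star)/\ell_k}$ in one line, with no case split.

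You instead test the normal cone of $\cX$ separately at $x_\star$ and at $x_k$. The test at $x_k$ has no counterpart in the projection formulation, since $x_k \notin \cH(g_k,\alpha)$, and it is what gives $\lambda \le \ell_k/t_k^2$. For $\lambda>0$, the paper's single inequality is in effect the $(1-s,s)$-combination of your two tests, with $s$ tuned so that the $g_k$-term drops out.

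What the paper's version buys:
\begin{itemize}
\item It isolates exactly what the projection identity contributes.
\item It is case-free at this stage, because the case split has been absorbed into \Cref{thm:lmo_boundary_and_projection}.
\item It uses nothing about the ball beyond the step length and the fact that $x_{k+1}$ lies on the cut's boundary, so the same computation applies to other members of the family \eqref{eq:proj_family}.
\end{itemize}
What yours buys: it bypasses the projection reformulation and reads the sharpened Fej\'er inequality straight off the KKT system, with $1/\lambda \ge t_k^2/\ell_k$ acting as an effective step size. The price is that $\lambda = 0$ must be treated separately; there $\varepsilon_k(x_\star) = 0$ anyway, and the bound reduces to ordinary Fej\'er monotonicity.
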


\begin{proof}
    \Cref{lem:local_fw_gap} shows that $\ell_k \ge (t_k/r_k)a_k > 0$, with $r_k = \norm{x_k-x_\star^0}$ as in \eqref{eq:xstar0}.
    
    To establish $\varepsilon_k(x_\star) \ge \Gamma_k \ge 0$, note that convexity gives $\inner{g_k}{x_k-x_\star} \ge a_k$, i.e. $\inner{g_k}{x_\star} \le \alpha_{\rm cvx}(x_k)$. 
    Hence, $\varepsilon_k(x_\star) = \alpha_{\rm lmo}(x_k,t_k)-\inner{g_k}{x_\star} \ge \alpha_{\rm lmo}(x_k,t_k)-\alpha_{\rm cvx}(x_k) = \Gamma_k$ by \Cref{thm:lmo_boundary_and_projection} and \eqref{eq:overshoot}, and $\Gamma_k \ge 0$ by \Cref{lem:overshoot}.

    Write $\alpha \eqdef \alpha_{\rm lmo}(x_k,t_k)$ and $\cH_k \eqdef \cH(g_k,\alpha)$, so that $\inner{g_k}{x_k} = \alpha+\ell_k$ and $\inner{g_k}{x_\star} = \alpha - \varepsilon_k(x_\star)$.
    Set
    \begin{equation*}
        s \eqdef \frac{\ell_k}{\ell_k+\varepsilon_k(x_\star)} \in (0,1] , \qquad z \eqdef (1-s) x_k + s x_\star .
    \end{equation*}
    Then $z \in \cX$ by convexity, and
    \begin{equation*}
        \inner{g_k}{z} = (1-s)(\alpha+\ell_k)+s(\alpha-\varepsilon_k(x_\star)) = \alpha+\ell_k-s(\ell_k+\varepsilon_k(x_\star)) = \alpha,
    \end{equation*}
    so $z \in \cX\cap\cH_k$.

    By \Cref{thm:lmo_boundary_and_projection}, $x_{k+1} = \Proj_{\cX\cap\cH_k}(x_k)$, so the projection inequality at the point $z \in \cX\cap\cH_k$ reads $\inner{x_k-x_{k+1}}{z-x_{k+1}} \le 0$.
    Since $z-x_{k+1} = (x_k-x_{k+1})-s(x_k-x_\star)$ and $\norm{x_k-x_{k+1}} = t_k$ (\Cref{thm:lmo_boundary_and_projection}), this is
    \begin{equation*}
        t_k^2 - s\,\inner{x_k-x_{k+1}}{x_k-x_\star} \le 0, \qquad\text{i.e.}\qquad \inner{x_k-x_{k+1}}{x_k-x_\star} \ge \frac{t_k^2}{s} = t_k^2\rbr{1+\frac{\varepsilon_k(x_\star)}{\ell_k}}.
    \end{equation*}
    Finally, $\norm{x_{k+1}-x_\star}^2 = \norm{x_k-x_\star}^2-2\inner{x_k-x_{k+1}}{x_k-x_\star}+t_k^2$, which gives \eqref{eq:sharp_fejer}. 
    The second inequality is $\varepsilon_k(x_\star) \ge \Gamma_k$.
\end{proof}

\begin{remark}
\label{rem:overshoot_pays}
    At the Polyak radius $t_k = t(x_k)$, with $x_k \in \cX\setminus\cX_\star$, one has $\ell_k \le \norm{g_k}t_k = a_k$, so, specializing \eqref{eq:sharp_fejer} to $x_\star = x_\star^0$ and writing $\rho_k \eqdef \ell_k/a_k \in (0,1]$, \eqref{eq:sharp_fejer} reads
    \begin{equation}
        \label{eq:sharp_fejer_polyak}
        r_{k+1}^2 \le r_k^2-t_k^2\,\frac{2-\rho_k}{\rho_k} \le r_k^2-\frac{t_k^2}{\rho_k}.
    \end{equation}
    \Cref{lem:overshoot_lower_bound} now says that the step cannot reduce the value below $\Gamma_k = (1-\rho_k) a_k$: the smaller $\rho_k$, the worse the value progress.
    On the other hand, \eqref{eq:sharp_fejer_polyak} says the smaller $\rho_k$, the better the distance progress, by precisely the reciprocal factor.
\end{remark}

\begin{corollary}
\label{cor:budget}
    For \algname{Local LMO} at the Polyak radius $t_k = t(x_k)$, with $x_k \in \cX\setminus\cX_\star$ for every $k < K$, for every $K \ge 1$,
    \begin{equation}
        \label{eq:budget}
        \sum_{k=0}^{K-1}\frac{t_k^2}{\rho_k} = \sum_{k=0}^{K-1}\frac{t_k^2\,a_k}{\ell_k} \le R^2 .
    \end{equation}
\end{corollary}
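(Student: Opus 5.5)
The corollary is a telescoping consequence of the sharpened Fejér inequality in \Cref{rem:overshoot_pays}. I would fix the reference minimizer $x_\star^0 = \Proj_{\cX_\star}(x_0)$ and set $r_k = \norm{x_k - x_\star^0}$. For each $k < K$ we have $x_k \in \cX\setminus\cX_\star$ and $t_k = t(x_k)$. Hence \Cref{thm:sharp_fejer}, specialized via \eqref{eq:sharp_fejer_polyak}, gives
\begin{equation*}
    r_{k+1}^2 \le r_k^2 - \frac{t_k^2}{\rho_k}.
\end{equation*}
Here $\rho_k = \ell_k/a_k \in (0,1]$, and $\ell_k > 0$ by \Cref{lem:local_fw_gap}, so the quotient is well defined. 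The weaker form is the one to use: $(2-\rho_k)/\rho_k \ge 1/\rho_k$ because $\rho_k \le 1$, and $\rho_k \le 1$ follows from $\ell_k \le \norm{g_k}t_k = a_k$ (Cauchy--Schwarz with the step length $t_k$ from part~\ref{it:lmoproj-stepsize} of \Cref{thm:lmo_is_projection}).

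Next I would sum these inequalities over $k = 0,\dots,K-1$, exactly as in the proof of \Cref{thm:projection_fejer_halfspace}. This yields
\begin{equation*}
    \sum_{k<K} \frac{t_k^2}{\rho_k} \le r_0^2 - r_K^2 \le r_0^2 = R^2.
\end{equation*}
The last equality holds by the choice of $x_\star^0$.

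The stated equality $t_k^2/\rho_k = t_k^2 a_k/\ell_k$ is simply the definition of $\rho_k$.

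There is no real obstacle here. The only points needing care are that every step $k < K$ satisfies the hypotheses of \Cref{thm:sharp_fejer}, which is guaranteed by the assumption $x_k \notin \cX_\star$ for all $k < K$, and that the same fixed minimizer $x_\star^0$ is used throughout the telescoping.
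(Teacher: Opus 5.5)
Your proposal is correct and follows the same route as the paper: telescope the sharpened Fejér inequality \eqref{eq:sharp_fejer_polyak} at the fixed minimizer $x_\star^0$, then drop $r_K^2 \ge 0$ and use $r_0 = R$. Your additional checks ($\ell_k > 0$, $\rho_k \le 1$, and that each $k < K$ meets the hypotheses of \Cref{thm:sharp_fejer}) are precisely the details the paper leaves implicit.
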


\begin{proof}
    This follows by telescoping \eqref{eq:sharp_fejer_polyak} and using $r_K^2 \ge 0$.
\end{proof}

\subsection{Proof of \Cref{thm:localrate}}
\label{sub:proof6}

The proof is a three-way accounting of the $K$ steps, run against two budgets: 
the sharpened Fejér budget $\sum_{k<K}t_k^2/\rho_k \le R^2$ (\Cref{cor:budget}) and the telescoping of the potential $\Phi_k \eqdef a_k^{-2}$.
What makes the two fit together is the following identity,
\begin{equation}
    \label{eq:gaincost}
    \underbrace{\frac{\rho_k}{a_k^2}}_{\text{potential gain}}\cdot\underbrace{\frac{t_k^2}{\rho_k}}_{\text{Fejér cost}} \  = \ \frac{t_k^2}{a_k^2}\  = \ \frac1{\norm{g_k}^2}.
\end{equation}
Gain times cost does not depend on $\rho_k$ at all.
A step that makes little progress ($\rho_k$ small) pays for it in the Fejér budget by the reciprocal factor, so no choice of $\rho_k$ is free, and Cauchy--Schwarz converts \eqref{eq:gaincost} into a bound on the number of steps.
The only obstruction is the Hölder error $c\,t_k^{1+\vartheta}$, which can undo the potential gain. 
It is controlled by the gradient bound of \Cref{lem:grad_bound}, which together with the triangle inequality pins $\norm{g_k}$ to within a factor of the constant $G_\star$ once $a_k$ is below an explicit level, and by the Fejér budget above that level.

Throughout this section, we set $\lambda \eqdef \tfrac{2}{3}$,
\begin{equation}
    \label{eq:abar}
    \bar a \eqdef \rbr{\frac{\lambda G_\star}{\kappa}}^{\frac{1+\vartheta}{\vartheta}} , \qquad\text{so that}\qquad \kappa\,\bar a^{\frac{\vartheta}{1+\vartheta}} = \lambda G_\star ,
\end{equation}
with the conventions $\bar a \eqdef 0$ if $G_\star = 0$ and $\bar a \eqdef +\infty$ if $L_\vartheta = 0 < G_\star$,
with $\kappa = \kappa_\vartheta L_\vartheta^{1/(1+\vartheta)}$ and $c = L_\vartheta/(1+\vartheta)$ as in \eqref{eq:kappa_h}, and
\begin{equation}
    \label{eq:delta_k}
    \delta_k \eqdef \frac{c\,t_k^{1+\vartheta}}{a_k} = \frac{c\,t_k^{\vartheta}}{\norm{g_k}}, \qquad\text{so that}\qquad a_{k+1} \le a_k\rbr{1-\rho_k+\delta_k}
\end{equation}
by \Cref{lem:master_recursion} and $a_k = t_k \norm{g_k}$.

Before we offer the proof of \Cref{thm:localrate}, we establish the bounds on the constants $D_\vartheta, E_\vartheta$ introduced next.

With $\Theta_\vartheta \eqdef 5^{(1+\vartheta)/2}$, define
\begin{equation}
    \label{eq:Dtheta_Etheta}
    \begin{aligned}
    D_\vartheta& \eqdef \rbr{\tfrac52\,\kappa_\vartheta}^{1+\vartheta},\qquad E_\vartheta \eqdef \max\curlybr{E^{(1)}_\vartheta,E^{(2)}_\vartheta,E^{(3)}_\vartheta},\\[2pt]
    E^{(1)}_\vartheta& \eqdef \sbr{\tfrac{25}{9}\,2^{\vartheta}\rbr{\tfrac32}^{1+\vartheta}\kappa_\vartheta^{1+\vartheta}}^{\frac1{2-\vartheta}}, \quad E^{(2)}_\vartheta \eqdef \sbr{\tfrac{8(1+\Theta_\vartheta)}{1+\vartheta}\rbr{\tfrac53}^{1-\vartheta}}^{\frac1{2-\vartheta}}, \quad E^{(3)}_\vartheta \eqdef \tfrac{20}3 .
    \end{aligned}
\end{equation}

\begin{lemma}
\label{lem:constants}
    For every $\vartheta \in [0,1]$, $D_\vartheta \le \tfrac{25}2$ and $E_\vartheta \le 25$, both attained at $\vartheta = 1$.
\end{lemma}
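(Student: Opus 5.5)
The plan is to treat each constant separately. In each case I reduce the claim to a one-variable inequality on $[0,1]$, prove monotonicity or an endpoint-convexity bound, and then evaluate at $\vartheta=1$. Two facts will be used throughout:
\begin{itemize}
\item With the convention $\kappa_0=1$, we have $\kappa_\vartheta^{1+\vartheta}=\rbr{\tfrac{1+\vartheta}{\vartheta}}^{\vartheta}$.
\item Its logarithm $\psi(\vartheta)\eqdef\vartheta\log(1+1/\vartheta)$ is nondecreasing on $(0,1]$, with $\psi(0^+)=0$. Indeed, $\psi'(\vartheta)=\log(1+1/\vartheta)-\tfrac{1}{1+\vartheta}\ge 0$, by the elementary bound $\log(1+u)\ge u/(1+u)$ applied with $u=1/\vartheta$.
\end{itemize}
At $\vartheta=1$ we have $\kappa_1=\sqrt2$ and $\kappa_1^{2}=2$.

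\textbf{Bound on $D_\vartheta$.} Write $\log D_\vartheta=(1+\vartheta)\log\tfrac52+\psi(\vartheta)$. Both summands are nondecreasing, so $D_\vartheta\le D_1=\rbr{\tfrac52}^2\cdot 2=\tfrac{25}{2}$. The maximum is attained at $\vartheta=1$.

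\textbf{Bound on $E^{(1)}_\vartheta$.} The base
\[
B(\vartheta)\eqdef\tfrac{25}{9}\,2^{\vartheta}\rbr{\tfrac32}^{1+\vartheta}\kappa_\vartheta^{1+\vartheta}
\]
is a product of nondecreasing factors, again by monotonicity of $\psi$. It is at least $B(0)=\tfrac{25}{6}>1$. The exponent $\tfrac1{2-\vartheta}$ is also nondecreasing. A base that is at least $1$ and nondecreasing, raised to a nondecreasing exponent, gives a nondecreasing function, so $E^{(1)}_\vartheta\le B(1)=\tfrac{25}{9}\cdot2\cdot\tfrac94\cdot2=25$.

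\textbf{Bound on $E^{(2)}_\vartheta$.} This is the main obstacle, because the bound is nearly tight at $\vartheta=1$: $E^{(2)}_1=8\cdot 6/2=24$. Crude estimates such as dropping the factor $\tfrac1{1+\vartheta}$, or using $1+\Theta_\vartheta\le c\,\Theta_\vartheta$, lose exactly the needed margin. I will take logarithms and show
\[
\log b(\vartheta)\le(2-\vartheta)\log 25,\qquad b(\vartheta)\eqdef\tfrac{8(1+\Theta_\vartheta)}{1+\vartheta}\rbr{\tfrac53}^{1-\vartheta},
\]
by sandwiching the left side below an affine function of $\vartheta$. The three ingredients are:
\begin{itemize}
\item $\vartheta\mapsto\log(1+5^{(1+\vartheta)/2})$ is convex, being of the form $\log(1+e^{a\vartheta+b})$. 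It therefore lies below its chord from $\log(1+\sqrt5)$ at $\vartheta=0$ to $\log 6$ at $\vartheta=1$.
\item $\log(1+\vartheta)$ is concave, so $-\log(1+\vartheta)\le-\vartheta\log2$.
\item $(1-\vartheta)\log\tfrac53$ is already affine.
\end{itemize}
Hence $\log b$ is bounded by an affine function, and it suffices to compare the endpoints:
\begin{itemize}
\item at $\vartheta=0$: $\log8+\log(1+\sqrt5)+\log\tfrac53\approx3.76\le2\log25\approx6.44$;
\item at $\vartheta=1$: $\log 8+\log6-\log2=\log24\le\log25$.
\end{itemize}
This gives $E^{(2)}_\vartheta\le25$.

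\textbf{Conclusion.} Finally, $E^{(3)}_\vartheta=\tfrac{20}{3}\le25$ trivially. Therefore $E_\vartheta\le25$, with equality at $\vartheta=1$ coming from $E^{(1)}_1=25$.
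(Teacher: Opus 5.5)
Your proof is correct. For $D_\vartheta$ and $E^{(1)}_\vartheta$ it is essentially the paper's argument: monotonicity of $\kappa_\vartheta^{1+\vartheta}$ via $\log(1+u)\ge u/(1+u)$, then evaluation at $\vartheta=1$. The one nuance is in $E^{(1)}_\vartheta$. You show the whole function is nondecreasing, whereas the paper just bounds the bracket by $25$ and uses $x^{p}\le x$ for $x\ge 1$ and $p=\tfrac{1}{2-\vartheta}\le 1$.

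The genuine difference is in $E^{(2)}_\vartheta$.
\begin{itemize}
\item The paper first applies the crude bound $5^{(1+\vartheta)/2}\le 5$. This replaces $B(\vartheta)$ by $\widetilde B(\vartheta)=48(\tfrac53)^{1-\vartheta}/(1+\vartheta)$. It then shows that $\log\bigl(25^{2-\vartheta}/\widetilde B(\vartheta)\bigr)$ has derivative $\tfrac{1}{1+\vartheta}-\log 25+\log\tfrac53<0$. So only $\vartheta=1$ needs checking, where the ratio is $25/24>1$.
\item You keep $\Theta_\vartheta$ exact. You use convexity of $\log(1+5^{(1+\vartheta)/2})$ and concavity of $\log(1+\vartheta)$ to dominate $\log b$ by an affine function. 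You then compare two affine functions at the endpoints.
\end{itemize}

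Your route reduces everything to two endpoint evaluations and needs no derivative estimate. The paper's route is shorter: one crude bound plus a single sign check.

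Your remark that crude estimates ``lose exactly the needed margin'' holds for the two you list, but not in general. The paper's bound $\Theta_\vartheta\le 5$ is exact at $\vartheta=1$, the only place where the margin is tight ($24$ vs.\ $25$). So it costs nothing where it matters.
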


\begin{proof}
    Write $\psi(\vartheta) \eqdef \kappa_\vartheta^{1+\vartheta} = \rbr{\tfrac{1+\vartheta}{\vartheta}}^{\vartheta} = \exp\rbr{\vartheta\log(1+\tfrac1\vartheta)}$.
    Its exponent has derivative $\log(1+\tfrac1\vartheta)-\tfrac1{1+\vartheta} \ge 0$, because $\log(1+x) \ge \tfrac{x}{1+x}$ with $x = 1/\vartheta$ gives exactly $\log(1+\tfrac1\vartheta) \ge \tfrac{1/\vartheta}{1+1/\vartheta} = \tfrac1{1+\vartheta}$.
    So $\psi$ is nondecreasing with $\psi(\vartheta) \le \psi(1) = 2$.
    Hence $D_\vartheta = (\tfrac52)^{1+\vartheta}\psi(\vartheta) \le (\tfrac52)^2\cdot2 = \tfrac{25}2$, attained at $\vartheta = 1$.

    For $E_\vartheta$ note first that $\tfrac1{2-\vartheta} \le 1$, so a bracket that is $\ge 1$ is not increased by raising it to that power; this settles the first bracket, while the second needs the sharper bound below.
    The first bracket is $\tfrac{25}9\,2^{\vartheta}(\tfrac32)^{1+\vartheta}\psi(\vartheta) \le \tfrac{25}9\cdot2\cdot\tfrac94\cdot2 = 25$, with equality at $\vartheta = 1$, so $E^{(1)}_\vartheta \le 25$ and $E^{(1)}_1 = 25$.
    For the second, put $B(\vartheta) \eqdef 8\rbr{1+5^{(1+\vartheta)/2}}(\tfrac53)^{1-\vartheta}/(1+\vartheta)$, so $E^{(2)}_\vartheta = B(\vartheta)^{1/(2-\vartheta)}$, and it suffices to show $B(\vartheta) \le 25^{\,2-\vartheta}$.
    Since $5^{(1+\vartheta)/2} \le 5$ on $[0,1]$ we have $B(\vartheta) \le \widetilde B(\vartheta) \eqdef 48(\tfrac53)^{1-\vartheta}/(1+\vartheta)$, and
    \begin{equation*}
        \frac{d}{d\vartheta}\log\frac{25^{\,2-\vartheta}}{\widetilde B(\vartheta)} = \frac1{1+\vartheta}-\log25+\log\tfrac53\  \le \ 1-3.218+0.512\  < \ 0 ,
    \end{equation*}
    so $25^{2-\vartheta}/\widetilde B(\vartheta)$ is decreasing and attains its minimum on $[0,1]$ at $\vartheta = 1$, where it equals $25/48\cdot2 = \tfrac{50}{48} > 1$.
    Hence $E^{(2)}_\vartheta \le \widetilde B(\vartheta)^{1/(2-\vartheta)} \le 25$.
    Finally, $E^{(3)}_\vartheta = \tfrac{20}3 \le 25$.
\end{proof}

The endpoint $\vartheta = 0$ is settled separately, and more sharply, by the following elementary bound.

\begin{proposition}
\label{prop:rate_zero}
    Let $\vartheta = 0$, that is, $D_f(u,v) \le L_0\norm{u-v}$ for all $u,v \in \R^d$, and let $\{x_k\}$ be generated by \algname{Local LMO} at the Polyak radius $t_k = t(x_k)$ from $x_0 \in \cX$.
    Then, for every $K \ge 1$,
    \begin{equation}
        \label{eq:rate_zero}
        \min_{0\le k\le K} a_k\  \le \ \frac{\rbr{G_\star+L_0}R}{\sqrt K} .
    \end{equation}
\end{proposition}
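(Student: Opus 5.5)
The plan is to combine the gradient bound with plain Fejér monotonicity. No budget accounting is needed.

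First, dispose of trivial cases. If some $x_k\in\cX_\star$ with $k\le K$, then $a_k=0$ and \eqref{eq:rate_zero} holds. So assume $x_k\in\cX\setminus\cX_\star$ for all $0\le k\le K$. Then each Polyak radius $t_k=t(x_k)=a_k/\norm{g_k}$ is well defined and positive (\Cref{lem:zero_grad}).

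Second, bound the gradients uniformly along the trajectory. By \Cref{lem:grad_bound_zero} (equivalently, \Cref{lem:grad_bound} at $\vartheta=0$ with $\kappa_0=1$), we have $\norm{g_k-\nabla f(x_\star)}\le L_0$. The triangle inequality then gives
\[
\norm{g_k}\le G_\star+L_0 \quad\text{for every }k,
\]
with $G_\star$ independent of the choice of $x_\star$ by \Cref{lem:grad_const_on_Xstar}. Consequently $t_k=a_k/\norm{g_k}\ge a_k/(G_\star+L_0)$.

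Third, use Fejér monotonicity. Part~\ref{it:descent-3} of \Cref{lem:descent}, applied with $x_\star=\Proj_{\cX_\star}(x_0)$ and telescoped over $k=0,\dots,K-1$, gives
\[
\sum_{k=0}^{K-1}t_k^2\le R^2 .
\]
Combining this with the previous step,
\[
K\min_{0\le k\le K-1}a_k^2\le (G_\star+L_0)^2\sum_{k<K}t_k^2\le (G_\star+L_0)^2R^2 .
\]
Taking square roots yields $\min_{k\le K-1}a_k\le (G_\star+L_0)R/\sqrt K$. This is at least as strong as \eqref{eq:rate_zero}, whose minimum also includes $k=K$.

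The argument is routine. The only point requiring care is the endpoint convention: the Hölder-$0$ assumption must be used in the form of \Cref{lem:grad_bound_zero}, since the conjugate inequality of \Cref{lem:conj_holder} is vacuous at $\vartheta=0$. One should also check that the case $G_\star+L_0=0$ is harmless. In that case every $\nabla f(x_k)$ equals $\nabla f(x_\star)=0$, which contradicts $x_k\notin\cX_\star$ via \Cref{lem:zero_grad}, so the run has already stopped and the bound holds trivially.
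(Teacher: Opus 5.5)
Your proposal is correct and follows the paper's proof essentially line for line: the gradient bound $\norm{g_k}\le G_\star+L_0$ from \Cref{lem:grad_bound_zero}, then $a_k=t_k\norm{g_k}\le(G_\star+L_0)t_k$ at the Polyak radius, then the telescoped Fejér bound $\sum_{k<K}t_k^2\le R^2$. The only difference is cosmetic: the paper evaluates at an index minimizing $t_k$, whereas you average $a_k^2$. Your handling of the stopping and degenerate cases is extra care that the paper leaves implicit.
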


\begin{proof}
    By \Cref{lem:grad_bound_zero} applied at $(u,v) = (x_k,x_\star)$ for any $x_\star \in \cX_\star$, $\norm{g_k} \le \norm{\nabla f(x_\star)}+\norm{g_k-\nabla f(x_\star)} \le G_\star+L_0$.
    At the Polyak radius $a_k = t_k\norm{g_k}$, so $a_k \le \rbr{G_\star+L_0}t_k$ for every $k$.
    By part~\ref{it:descent-3} of \Cref{lem:descent} and $r_0 = R$, telescoping gives $\sum_{k<K}t_k^2 \le r_0^2-r_K^2 \le R^2$, hence $\min_{k<K}t_k^2 \le R^2/K$.
    Choosing $k^\star \in \argmin_{k<K}t_k$ (so that $k^\star \le K-1$, and the bound in fact holds for $\min_{0\le k\le K-1}a_k$),
    \begin{equation*}
        \min_{0\le k\le K} a_k\  \le \ a_{k^\star}\  \le \ \rbr{G_\star+L_0}t_{k^\star}\  \le \ \frac{\rbr{G_\star+L_0}R}{\sqrt K} . \qedhere
    \end{equation*}
\end{proof}

\localratethm*

\begin{proof}
    Suppose first that $\vartheta = 0$, so that both exponents in \eqref{eq:localrate} equal $\tfrac12$ and the claim reads $\min_{k\le K}a_k \le \max\curlybr{\Lambda_1,\Lambda_2}K^{-1/2}$ with $\Lambda_1 = \tfrac{25}2L_0R$ and $\Lambda_2 \ge 25\,G_\star R$, by \eqref{eq:lambdas}.
    Since $u+v \le \max\curlybr{\tfrac{25}2u,25v}$ for all $u,v \ge 0$, \Cref{prop:rate_zero} yields the claim.
    Assume from now on that $\vartheta \in (0,1]$.

    Set $A \eqdef \min_{0\le k\le K} a_k$.
    If $A = 0$ there is nothing to prove, so assume $A > 0$; then $a_k \ge A > 0$ for every $k \le K$, and every $t_k$, $\rho_k$, $\delta_k$ below is well defined and finite.
    We use throughout: $\rho_k \in (0,1]$ (\Cref{lem:local_fw_gap} and \Cref{lem:overshoot}); $t_k \ge h(a_k) \ge h(A)$ with $h$ increasing (\Cref{cor:apriori}); $\sum_{k<K}t_k^2 \le \sum_{k<K}t_k^2/\rho_k \le R^2$ (\Cref{cor:budget}); and the \emph{two-sided} gradient bound
    \begin{equation}
        \label{eq:twosided}
        G_\star-\kappa\,a_k^{\frac{\vartheta}{1+\vartheta}}\  \le \ \norm{g_k}\  \le \ G_\star+\kappa\,a_k^{\frac{\vartheta}{1+\vartheta}},
    \end{equation}
    which is \eqref{eq:grad_bound} together with the triangle inequality in both directions.

    \emph{Case 1: $A > \bar a$.} 
    We may assume $\kappa > 0$: if $L_\vartheta = 0$ then $\kappa = 0$, and either $G_\star > 0$, in which case $\bar a = +\infty$ by the convention following \eqref{eq:abar} and this case is empty, or $G_\star = 0$, in which case $D_f \equiv 0$ forces $f$ to be affine with $\nabla f \equiv \nabla f(x_\star) = 0$, so $a_0 = 0$ and $A = 0$, already excluded.

    By \eqref{eq:abar} this means $\kappa A^{\vartheta/(1+\vartheta)} > \lambda G_\star$, hence $G_\star+\kappa A^{\vartheta/(1+\vartheta)} < \tfrac{1+\lambda}{\lambda}\kappa A^{\vartheta/(1+\vartheta)}$ and
    \begin{equation*}
        h(A)\  = \ \frac{A}{G_\star+\kappa A^{\vartheta/(1+\vartheta)}}\  > \ \frac{\lambda}{(1+\lambda)\kappa}\,A^{\frac1{1+\vartheta}} .
    \end{equation*}
    Since $t_k \ge h(A)$ for all $k < K$, the Fejér budget gives $K\,h(A)^2 \le R^2$, i.e. $A^{2/(1+\vartheta)} \le \rbr{\tfrac{(1+\lambda)\kappa}{\lambda}}^2R^2/K$, i.e.
    \begin{equation*}
        A\  \le \ \rbr{\tfrac{1+\lambda}{\lambda}\,\kappa R}^{1+\vartheta}K^{-\frac{1+\vartheta}2} \  = \ D_\vartheta\,L_\vartheta R^{1+\vartheta}K^{-\frac{1+\vartheta}2} \  \le \ \Lambda_1K^{-\frac{1+\vartheta}2},
    \end{equation*}
    using $\kappa^{1+\vartheta} = \kappa_\vartheta^{1+\vartheta}L_\vartheta$ and $\tfrac{1+\lambda}{\lambda} = \tfrac52$, so that the middle term is $\rbr{\tfrac52\kappa_\vartheta}^{1+\vartheta}L_\vartheta R^{1+\vartheta}K^{-(1+\vartheta)/2} = D_\vartheta L_\vartheta R^{1+\vartheta}K^{-(1+\vartheta)/2}$ by \eqref{eq:Dtheta_Etheta}, together with $D_\vartheta \le \tfrac{25}2$ from \Cref{lem:constants}. 
    (If $G_\star = 0$ then $\bar a = 0$ and this is the only case, so \eqref{eq:localrate} holds with $\Lambda_2 = 0$.)

    \emph{Case 2: $A \le \bar a$} (so $G_\star > 0$).
    Split $\{0,\dots,K-1\} = \cI\cup\cJ$,
    \begin{equation*}
        \cI \eqdef \{k < K:\ a_k \le \bar a\},\qquad \cJ \eqdef \{k < K:\ a_k > \bar a\} .
    \end{equation*}
    For $k \in \cI$, \eqref{eq:twosided} and $\kappa a_k^{\vartheta/(1+\vartheta)} \le \kappa\bar a^{\vartheta/(1+\vartheta)} = \lambda G_\star$ give the two-sided window
    \begin{equation}
        \label{eq:window}
        (1-\lambda)G_\star\  \le \ \norm{g_k}\  \le \ (1+\lambda)G_\star, \qquad\text{hence}\qquad \frac{a_k}{(1+\lambda)G_\star}\  \le \ t_k\  \le \ \frac{a_k}{(1-\lambda)G_\star}.
    \end{equation}
    For $k \in \cJ$ we have $t_k \ge h(a_k) > h(\bar a) = \bar a/\rbr{(1+\lambda)G_\star}$, so $\sum_{k<K}t_k^2 \le R^2$ forces
    \begin{equation}
        \label{eq:Kone}
        \abs{\cJ}\  \le \ K_1\  \eqdef \ \frac{(1+\lambda)^2G_\star^2R^2}{\bar a^{\,2}} .
    \end{equation}

    \emph{Case 2a: $K \le 4K_1$.} Since $A \le \bar a$ we have, as in \eqref{eq:window}, $h(A) \ge A/((1+\lambda)G_\star)$, so $K\,h(A)^2 \le R^2$ gives $A \le (1+\lambda)G_\star R\,K^{-1/2}$.
    As $\tfrac1{2-\vartheta}-\tfrac12 = \tfrac{\vartheta}{2(2-\vartheta)} \ge 0$ and $1 \le K \le 4K_1$,
    \begin{equation*}
        A\  \le \ (1+\lambda)G_\star R\,K^{-\frac1{2-\vartheta}}K^{\frac{\vartheta}{2(2-\vartheta)}} \  \le \ (1+\lambda)G_\star R\,\rbr{\frac{2(1+\lambda)G_\star R}{\bar a}}^{\frac{\vartheta}{2-\vartheta}}K^{-\frac1{2-\vartheta}},
    \end{equation*}
    using $(4K_1)^{1/2} = 2(1+\lambda)G_\star R/\bar a$.
    By \eqref{eq:abar}, $\bar a^{-\vartheta/(2-\vartheta)} = (\kappa/(\lambda G_\star))^{(1+\vartheta)/(2-\vartheta)}$, and since $1+\tfrac{\vartheta}{2-\vartheta} = \tfrac2{2-\vartheta}$ the prefactor equals
    \begin{equation*}
        (1+\lambda)^{\frac2{2-\vartheta}}2^{\frac{\vartheta}{2-\vartheta}}\lambda^{-\frac{1+\vartheta}{2-\vartheta}} \kappa_\vartheta^{\frac{1+\vartheta}{2-\vartheta}}\, \rbr{G_\star^{1-\vartheta}R^2L_\vartheta}^{\frac1{2-\vartheta}} \  = \ E^{(1)}_\vartheta\rbr{L_\vartheta R^2G_\star^{1-\vartheta}}^{\frac1{2-\vartheta}}\  \le \ \Lambda_2 ,
    \end{equation*}
    because $(G_\star R)^{2/(2-\vartheta)}G_\star^{-(1+\vartheta)/(2-\vartheta)} = (G_\star^{1-\vartheta}R^2)^{1/(2-\vartheta)}$ and $\kappa^{(1+\vartheta)/(2-\vartheta)} = (\kappa_\vartheta^{1+\vartheta}L_\vartheta)^{1/(2-\vartheta)}$; with $\lambda = \tfrac23$ the bracket is $(\tfrac53)^2 2^\vartheta(\tfrac32)^{1+\vartheta}\kappa_\vartheta^{1+\vartheta}$.
    The final inequality holds because $E^{(1)}_\vartheta \le E_\vartheta \le 25$ by \Cref{lem:constants}, and $\Lambda_2 \ge 25\rbr{L_\vartheta R^2G_\star^{1-\vartheta}}^{1/(2-\vartheta)}$ by \eqref{eq:lambdas}.

    \emph{Case 2b: $K > 4K_1$.} Split $\cI = \cI_P\cup\cI_S$ with
    \begin{equation*}
        \cI_P \eqdef \{k \in \cI:\ \rho_k \ge 2\delta_k\}\quad(\text{\emph{productive}}),\qquad \cI_S \eqdef \{k \in \cI:\ \rho_k < 2\delta_k\}\quad(\text{\emph{stalling}}).
    \end{equation*}

    \emph{(i) Stalling steps are few.} For $k \in \cI_S$, $\rho_k < 2\delta_k$ and \eqref{eq:delta_k} give
    \begin{equation*}
        \frac{t_k^2}{\rho_k}\  > \ \frac{t_k^2}{2\delta_k}\  = \ \frac{a_k\,t_k^{1-\vartheta}}{2c} \  \ge \ \frac{a_k^{2-\vartheta}}{2c\rbr{(1+\lambda)G_\star}^{1-\vartheta}} \  \ge \ \frac{A^{2-\vartheta}}{2c\rbr{(1+\lambda)G_\star}^{1-\vartheta}},
    \end{equation*}
    by the left inequality of \eqref{eq:window} and $a_k \ge A$.
    Summing and using \Cref{cor:budget},
    \begin{equation}
        \label{eq:NS}
        \abs{\cI_S}\  \le \ N\  \eqdef \ 2c\rbr{(1+\lambda)G_\star}^{1-\vartheta}R^2A^{\vartheta-2}.
    \end{equation}

    \emph{(ii) The potential.} Put $\Phi_k \eqdef a_k^{-2}$, well defined since $a_k \ge A > 0$.
    If $k \in \cI_P$ then $\rho_k \ge 2\delta_k$ and \eqref{eq:delta_k} give $a_{k+1} \le a_k(1-\tfrac{\rho_k}2)$ with $1-\tfrac{\rho_k}2 \ge \tfrac12 > 0$, so, by $(1-s)^{-2} \ge 1+2s$ on $[0,1)$ applied at $s = \rho_k/2$,
    \begin{equation}
        \label{eq:gainP}
        \Phi_{k+1}-\Phi_k\  \ge \ \Phi_k\sbr{(1-\tfrac{\rho_k}2)^{-2}-1}\  \ge \ \frac{\rho_k}{a_k^2}.
    \end{equation}
    If $k \in \cI_S$ then $a_{k+1} \le a_k(1+\delta_k)$, so by $1-(1+s)^{-2} \le 2s$ on $[0,\infty)$ and, by the right inequality of \eqref{eq:window}, $\delta_k = c\,t_k^{\vartheta}/\norm{g_k} \le c\,a_k^{\vartheta}/\rbr{(1-\lambda)^{1+\vartheta}G_\star^{1+\vartheta}}$,
    \begin{equation}
        \label{eq:lossS}
        \Phi_k-\Phi_{k+1}\  \le \ \frac{2\delta_k}{a_k^2}\  \le \ \frac{2c\,a_k^{\vartheta-2}}{(1-\lambda)^{1+\vartheta}G_\star^{1+\vartheta}} \  \le \ \frac{2c\,A^{\vartheta-2}}{(1-\lambda)^{1+\vartheta}G_\star^{1+\vartheta}},
    \end{equation}
    the last step because $\vartheta-2 < 0$ and $a_k \ge A$.
    If $k \in \cJ$ then simply $\Phi_k-\Phi_{k+1} \le \Phi_k = a_k^{-2} < \bar a^{-2}$.
    Telescoping over all $k < K$ and discarding $-\Phi_0 \le 0$,
    \begin{equation*}
        \sum_{k\in\cI_P}\rbr{\Phi_{k+1}-\Phi_k} \  = \ \Phi_K-\Phi_0-\!\!\sum_{k\in\cI_S\cup\cJ}\!\!\rbr{\Phi_{k+1}-\Phi_k} \  \le \ \Phi_K+\!\!\sum_{k\in\cI_S\cup\cJ}\!\!\rbr{\Phi_k-\Phi_{k+1}},
    \end{equation*}
    so, with $\Phi_K \le A^{-2}$, \eqref{eq:gainP}, \eqref{eq:lossS} and \eqref{eq:Kone},
    \begin{equation}
        \label{eq:potsum}
        \sum_{k\in\cI_P}\frac{\rho_k}{a_k^2}\  \le \ \frac1{A^2} \ +\ \frac{2c\,\abs{\cI_S}A^{\vartheta-2}}{(1-\lambda)^{1+\vartheta}G_\star^{1+\vartheta}} \ +\ \frac{K_1}{\bar a^{\,2}} .
    \end{equation}

    \emph{(iii) Productive steps are few.} By Cauchy--Schwarz, \Cref{cor:budget} and $t_k/a_k = 1/\norm{g_k} \ge 1/\rbr{(1+\lambda)G_\star}$ on $\cI$,
    \begin{equation*}
        \rbr{\frac{\abs{\cI_P}}{(1+\lambda)G_\star}}^{2} \le \rbr{\sum_{k\in\cI_P}\frac{t_k}{a_k}}^{2} \le \rbr{\sum_{k\in\cI_P}\frac{t_k^2}{\rho_k}}\rbr{\sum_{k\in\cI_P}\frac{\rho_k}{a_k^2}} \le R^2\sum_{k\in\cI_P}\frac{\rho_k}{a_k^2}.
    \end{equation*}
    Insert \eqref{eq:potsum} and use $\sqrt{x+y+z} \le \sqrt x+\sqrt y+\sqrt z$.
    The first term contributes $(1+\lambda)G_\star R/A$; the third contributes $(1+\lambda)G_\star R\sqrt{K_1}/\bar a = \rbr{(1+\lambda)G_\star R/\bar a}^2 = K_1$; and the second, using $\abs{\cI_S} \le N$ from \eqref{eq:NS}, contributes
    \begin{equation*}
        (1+\lambda)G_\star R\cdot\frac{\sqrt{2cNA^{\vartheta-2}}}{(1-\lambda)^{\frac{1+\vartheta}2}G_\star^{\frac{1+\vartheta}2}} = 2cR^2G_\star^{1-\vartheta}A^{\vartheta-2}\,\frac{(1+\lambda)^{\frac{3-\vartheta}2}}{(1-\lambda)^{\frac{1+\vartheta}2}} = \Theta\,N,\qquad \Theta \eqdef \rbr{\frac{1+\lambda}{1-\lambda}}^{\frac{1+\vartheta}2},
    \end{equation*}
    by \eqref{eq:NS} again.
    Altogether $\abs{\cI_P} \le (1+\lambda)G_\star R/A+\Theta N+K_1$, and therefore
    \begin{equation}
        \label{eq:Ksplit}
        K\  = \ \abs{\cJ}+\abs{\cI_S}+\abs{\cI_P}\  \le \ 2K_1+(1+\Theta)N+\frac{(1+\lambda)G_\star R}{A}.
    \end{equation}

    \emph{(iv) Conclusion.} Since $K > 4K_1$ we have $2K_1 < K/2$, so \eqref{eq:Ksplit} gives $(1+\Theta)N+(1+\lambda)G_\star R/A > K/2$ and at least one of the two terms is $\ge K/4$.

    If $(1+\Theta)N \ge K/4$ then, by \eqref{eq:NS} and $c = L_\vartheta/(1+\vartheta)$,
    \begin{equation*}
        A^{2-\vartheta}\  \le \ \frac{8(1+\Theta)(1+\lambda)^{1-\vartheta}}{1+\vartheta}\cdot\frac{L_\vartheta R^2G_\star^{1-\vartheta}}{K}, \qquad\text{i.e.}\qquad A\  \le \ E^{(2)}_\vartheta\rbr{L_\vartheta R^2G_\star^{1-\vartheta}}^{\frac1{2-\vartheta}}K^{-\frac1{2-\vartheta}},
    \end{equation*}
    which is at most $\Lambda_2K^{-1/(2-\vartheta)}$, because $E^{(2)}_\vartheta \le E_\vartheta \le 25$ by \Cref{lem:constants}; 
    with $\lambda = \tfrac23$ one has $\Theta = 5^{(1+\vartheta)/2}$ and $1+\lambda = \tfrac53$.

    If instead $(1+\lambda)G_\star R/A \ge K/4$ then $A \le 4(1+\lambda)G_\star R\,K^{-1} \le E^{(3)}_\vartheta\,G_\star R\,K^{-1/(2-\vartheta)}$, since $1 \ge \tfrac1{2-\vartheta}$ and $K \ge 1$; this too is at most $\Lambda_2K^{-1/(2-\vartheta)}$, since $E^{(3)}_\vartheta = \tfrac{20}3 \le 25$ (\Cref{lem:constants}) and $\Lambda_2 \ge 25\,G_\star R$ by \eqref{eq:lambdas}.

    In every case $A \le \max\{\Lambda_1K^{-(1+\vartheta)/2},\Lambda_2K^{-1/(2-\vartheta)}\}$, which is \eqref{eq:localrate}.
\end{proof}

\end{document}